\numberwithin{equation}{section}
\numberwithin{table}{section}
\newtheorem{proposition}{Proposition}[section]
\newtheorem{lemma}{Lemma}[section]
\newtheorem{theorem}{Theorem}[section]
\newtheorem{remark}{Remark}[section]
\newtheorem{assumption}{Assumption}[section]
\newtheorem{exam}{Example}[section]
\newtheorem{corollary}{Corollary}[section]
\theoremstyle{definition}
\begin{document}
\title{A Primal Dual Active Set with Continuation Algorithm for the $\ell^0$-Regularized Optimization Problem}

\author{
Yuling Jiao%
%\footnotemark[1],
\thanks{School of Mathematics and Statistics, Wuhan University, Wuhan 430072, P.R. China.
(yulingjiaomath@whu.edu.cn)}
\and Bangti Jin%
\thanks{Department of Mathematics, University of California, Riverside, 900 University Ave.
Riverside, CA 92521 USA (bangti.jin@gmail.com)}
\and Xiliang Lu%
%\footnotemark[1],
\thanks{Corresponding author. School of Mathematics and Statistics, Wuhan University,
Wuhan 430072, P.R. China (xllv.math@whu.edu.cn)}}

\maketitle

\begin{abstract}
We develop a primal dual active set with continuation algorithm for solving the $\ell^0$-regularized
least-squares problem that frequently arises in compressed sensing. The algorithm couples the the primal dual
active set method with a continuation strategy on the regularization parameter. At each
inner iteration, it first identifies the active set from both primal and dual variables, and then
updates the primal variable by solving a (typically small) least-squares problem defined on the active set,
from which the dual variable can be updated explicitly. Under certain conditions on the sensing
matrix, i.e., mutual incoherence property or restricted isometry property, and the noise level, the
finite step global convergence of the algorithm is established. Extensive numerical examples are presented
to illustrate the efficiency and accuracy of the algorithm and the convergence analysis. \\
\noindent\textbf{keywords:} primal dual active set method, coordinatewise minimizer,
continuation strategy, global convergence.
\end{abstract}

\section{Introduction}

Over the last ten years, compressed sensing  \cite{CandesRombergTao:2006,Donoho:2006} has received
a lot of attention amongst engineers, statisticians and mathematicians due to its broad
range of potential applications. Mathematically it can be formulated as the following $\ell^0$
optimization problem:
\begin{equation}\label{l0noise}
 \begin{aligned}
   & \min_{x \in \mathbb{R}^{p}}\|x\|_{0},\\
   \textrm{subject to}\ \ & \ \ \|\Psi x  -y\|_{2}\leq\epsilon,
 \end{aligned}
\end{equation}
where the sensing matrix $\Psi\in\mathbb{R}^{n\times p}$ with $p\gg n$ has normalized column vectors
(i.e., $\|\psi_i\| =1$, $i=1,\cdots,p$), $\epsilon\geq0$ is the noise level,
and $\|x\|_{0}$ denotes the the number of nonzero components
in the vector $x$. Due to the discrete structure of the term $\|x\|_0$, it is very challenging to
develop an efficient algorithm to accurately solve the model \eqref{l0noise}. % \cite{Natarajan:1995}.
Hence, approximate methods for the model \eqref{l0noise}, especially greedy heuristics
and convex relaxation, are very popular in practice. In greedy algorithms, including orthogonal matching pursuit
\cite{PatiRezaiifarKrishnaprasad:1993}, stagewise orthogonal matching pursuit \cite{DonohoTsaigDroriStarck:2012},
regularized orthogonal matching pursuit \cite{NeedellVershynin:2009}, CoSaMP
\cite{NeedellTropp:2009}, subspace pursuit \cite{DaiMilenkovic:2009}, and greedy
gradient pursuit \cite{BlumensathDavies:2009a} etc., one first identifies the support of the sought-for
signal, i.e., the locations of (one or more) nonzero components, iteratively based on the current
dual variable (correlation), and then updates the components on the support by solving a
least-squares problem. There are also several variants of the greedy heuristics, e.g., (accelerated) iterative hard thresholding
\cite{blumensath2012accelerated,BlumensathDavies:2009b} and hard thresholding pursuit \cite{Foucart:2011}, which are based
on the sum of the current primal and dual variable. In contrast, basis pursuit finds
one minimizer of a convex relaxation problem \cite{ChenDonohoSaunders:1998,Tibshirani:1996},
for which a wide variety of convex optimization algorithms can be conveniently applied;
see \cite{BJMG,pfbsr,proxrev,csnmr} for a comprehensive overview and the references therein.

Besides greedy methods and convex relaxation, the ``Lagrange'' counterpart of \eqref{l0noise} (or
equivalently, the $\ell^0$-regularized minimization problem), which reads
\begin{equation}\label{equ:l0reg}
  \min_{x \in \mathbb{R}^{p}} J_\lambda(x) = \tfrac{1}{2}\|\Psi x - y\|^2 + \lambda\|x\|_0,
\end{equation}
has been very popular in many applications, e.g., model selection, statistical regression, and
image restoration. In the model \eqref{equ:l0reg}, $\lambda>0$ is a regularization parameter,
controlling the sparsity level of the regularized solution. Due to the nonconvexity and
discontinuity of the function $\|x\|_0$, the relation between problems \eqref{l0noise} and
\eqref{equ:l0reg} is not self evident. We shall show that under certain assumptions on the sensing
matrix $\Psi$ and the noise level $\epsilon$ (and with $\lambda$ chosen properly), the support
of the solution to \eqref{equ:l0reg} coincides with that of the true signal, cf. Theorem \ref{thm:global}.

Very recently, the existence and a characterization of global minimizers to \eqref{equ:l0reg} were
established in \cite{JiaoJinLu:2013,Nikolova:2013}. However, it is still very challenging to develop
globally convergent algorithms for efficiently solving problem \eqref{equ:l0reg} in view of its
nonconvexity and nonsmoothness. Nonetheless, due to its broad range of applications, several algorithms
have been developed to find an approximate solution to problem \eqref{equ:l0reg},
including iterative hard thresholding \cite{BlumensathDavies:2009b}, forward backward splitting
\cite{AttouchBolteSvaiter:2013}, penalty decomposition \cite{LuZhang:2013} and stochastic continuation
\cite{RobiniMagnin:2010,RobiniReissman:2013}, to name just a few. %Leclerc:1989,
Theoretically, these algorithms can at best have a local convergence. Very recently, in
\cite{ItoKunisch:2014,JiaoJinLu:2013}, based on a coordinatewise characterization of the global
minimizers, a novel primal dual active set (PDAS) algorithm was developed to solve problem \eqref{equ:l0reg}.
The extensive simulation studies in \cite{JiaoJinLu:2013} indicate that when coupled with a continuation
technique, the PDAS algorithm merits a global convergence property.

We note that the PDAS can at best converge to a coordinatewise minimizer. However, if the support
of the coordinatewise minimizer is small and the sensing matrix $\Psi$ satisfies certain mild conditions,
then its active set is contained in the support of the true signal, cf. Lemma \ref{lem:cwm}. Hence, the support
of the minimizer will coincide with that of the true signal if we choose the parameter $\lambda$ properly (and
thus control the size of the active set) during the iteration. This motivates the use of a continuation strategy
on the parameter $\lambda$. The resulting PDAS continuation (PDASC) algorithm
extends the PDAS developed in \cite{JiaoJinLu:2013}. In this work, we provide a convergence analysis of
the PDASC algorithm %, i.e., global convergence under
under commonly used assumptions on the sensing matrix $\Psi$ for the analysis of existing algorithms, i.e., mutual incoherence property and
restricted isometry property. The convergence analysis relies essentially on
a novel characterization of the evolution of the active set during the primal-dual active set iterations.
To the best of our knowledge, this represents the first work on the global convergence of an algorithm for
problem \eqref{equ:l0reg}, without using a knowledge of the exact sparsity level.

The rest of the paper is organized as follows. In Section \ref{sec:prelim}, we describe the
problem setting, collect basic estimates, and provide
a refined characterization of a coordinatewise minimizer. In Section \ref{sec:alg}, we
give the complete algorithm, discuss the parameter choices, and provide a convergence analysis.
Finally, in Section \ref{sec:numeric}, several numerical examples are provided to illustrate
the efficiency of the algorithm and the convergence theory.

\section{Regularized $\ell^0$-minimization}\label{sec:prelim}
In this section, we describe the problem setting, and derive basic estimates, which are essential
for the convergence analysis. Further, we give sufficient conditions for a coordinatewise minimizer
to be a global minimizer.

\subsection{Problem setting}
Suppose that the true signal $x^*$ has $T$ nonzero components with its
active set (indices of nonzero components) denoted by $A^*$, i.e., $T=|A^*|$ and
the noisy data $y$ is formed by
\begin{equation*}
  y = \sum_{i\in{A}^*} x_i^*\psi_i + \eta.
\end{equation*}
We assume that the noise vector $\eta$ satisfies $\|\eta\|\leq \epsilon$, with $\epsilon\geq0$ being
the noise level. Further, we let
\begin{equation*}
  S = \{1,2,...,p\}\quad\mbox{and}\quad I^* = S\backslash{A}^*.
\end{equation*}
For any index set ${A}\subseteq S$, we denote by $x_{A}\in\mathbb{R}^{|{A}|}$ (respectively
$\Psi_{{A}} \in\mathbb{R}^{n\times |{A}|}$) the subvector of $x$ (respectively the submatrix of
$\Psi$) whose indices (respectively column indices) appear in ${A}$. Last, we denote by $x^o$ the
oracle solution defined by
\begin{equation}\label{equ:oracle}
x^o = \Psi_{{A}^*}^\dag y,
\end{equation}
where $\Psi_A^\dag$ denotes the pseudoinverse of the submatrix $\Psi_A$, i.e., $\Psi_A^\dag=(\Psi^t_{A}\Psi_{A})^{-1}
\Psi^t_{{A}}$ if $\Psi^t_{A}\Psi_{A}$ is invertible.

In compressive sensing, there are two assumptions, i.e., mutual incoherence property (MIP)
\cite{DonohoHuo:2001} and restricted isometry property (RIP) \cite{CandesTao:2005}, on the
sensing matrix $\Psi$ that are frequently used for the convergence analysis of sparse recovery
algorithms. The MIP relies on the fact that the mutual coherence (MC) $\nu$ of sensing matrix $\Psi$ is
small, where the mutual coherence (MC) $\nu$ of $\Psi$ is defined by
\begin{equation*}
\nu = \max\limits_{1\leq i, j \leq p, i\neq j} |\psi_i^t\psi_j|.
\end{equation*}
A sensing matrix $\Psi$ is said to satisfy RIP of level $s$ if there exists a constant $\delta\in (0,1)$ such that
\begin{equation*}
(1-\delta)\|x\|^2 \leq \|\Psi x\|^2 \leq (1+\delta)\|x\|^2, \,\, \forall x\in\mathbb{R}^p \mbox{ with } \|x\|_0 \leq s,
\end{equation*}
and we denote by $\delta_s$ the smallest constant with respect to the sparsity level $s$. We note that
the mutual coherence $\nu$ can be easily computed, but the RIP constant $\delta_s$ is
nontrivial to evaluate.

The next lemma gives basic estimates under the MIP condition.
\begin{lemma}\label{lem:mc}
Let $A$ and $B$ be disjoint subsets of $S$. Then
\begin{equation*}
  \begin{aligned}
    \|\Psi^t_A y\|_{\ell^\infty}&\leq \|y\|,\\
    \|\Psi^t_B\Psi_A x_A\|_{\ell^\infty}& \leq |A|\nu\|x_A\|_{\ell^\infty},\\
   \|(\Psi_A^t\Psi_A)^{-1} x_A\|_{\ell^\infty} &\leq \frac{\|x_A\|_{\ell^\infty}}{1 - (|A|-1)\nu} \quad \mbox{if} \quad
   (|A| -1)\nu <1.
  \end{aligned}
\end{equation*}
\end{lemma}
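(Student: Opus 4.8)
The plan is to prove each of the three inequalities separately, since they involve different structures. All three are essentially consequences of the normalization $\|\psi_i\|=1$ together with the definition of the mutual coherence $\nu$ as the largest off-diagonal inner product.

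\medskip

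\noindent\textbf{First inequality.} For the bound $\|\Psi_A^t y\|_{\ell^\infty}\leq\|y\|$, I would note that each component of $\Psi_A^t y$ is an inner product $\psi_i^t y$ for some $i\in A$. By the Cauchy--Schwarz inequality and the normalization $\|\psi_i\|=1$, we have $|\psi_i^t y|\leq\|\psi_i\|\,\|y\|=\|y\|$. Taking the maximum over $i\in A$ gives the claim immediately. This is the easy step.

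\medskip

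\noindent\textbf{Second inequality.} For $\|\Psi_B^t\Psi_A x_A\|_{\ell^\infty}\leq|A|\nu\|x_A\|_{\ell^\infty}$, fix an index $j\in B$. The $j$-th component of $\Psi_B^t\Psi_A x_A$ is $\sum_{i\in A}(\psi_j^t\psi_i)(x_A)_i$. Since $A$ and $B$ are \emph{disjoint}, every index $i\in A$ satisfies $i\neq j$, so each inner product obeys $|\psi_j^t\psi_i|\leq\nu$ by the definition of $\nu$. The plan is then to bound the sum by the triangle inequality: $\bigl|\sum_{i\in A}(\psi_j^t\psi_i)(x_A)_i\bigr|\leq\sum_{i\in A}|\psi_j^t\psi_i|\,|(x_A)_i|\leq|A|\,\nu\,\|x_A\|_{\ell^\infty}$, and then maximize over $j\in B$. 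The disjointness hypothesis is precisely what guarantees $i\neq j$ so that the coherence bound applies to every term.

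\medskip

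\noindent\textbf{Third inequality.} The last bound, $\|(\Psi_A^t\Psi_A)^{-1}x_A\|_{\ell^\infty}\leq\|x_A\|_{\ell^\infty}/(1-(|A|-1)\nu)$, is the main obstacle and requires a different idea, since it concerns the inverse of the Gram matrix rather than the matrix itself. The plan is to write $G=\Psi_A^t\Psi_A$ and split it as $G=I+R$, where $I$ is the identity (the diagonal entries are $\psi_i^t\psi_i=1$ by normalization) and $R$ collects the off-diagonal entries $\psi_i^t\psi_j$ with $i\neq j$, each bounded by $\nu$ in magnitude. Setting $z=(\Psi_A^t\Psi_A)^{-1}x_A$, so that $(I+R)z=x_A$, I would rearrange to $z=x_A-Rz$ and estimate in the $\ell^\infty$ norm: $\|z\|_{\ell^\infty}\leq\|x_A\|_{\ell^\infty}+\|Rz\|_{\ell^\infty}$. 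Each row of $R$ has $|A|-1$ off-diagonal entries, each at most $\nu$ in absolute value, so by the same row-sum argument as in the second inequality, $\|Rz\|_{\ell^\infty}\leq(|A|-1)\nu\|z\|_{\ell^\infty}$. Combining gives $\|z\|_{\ell^\infty}\leq\|x_A\|_{\ell^\infty}+(|A|-1)\nu\|z\|_{\ell^\infty}$, and solving for $\|z\|_{\ell^\infty}$ yields the claimed bound, where the hypothesis $(|A|-1)\nu<1$ is exactly what is needed to ensure the coefficient $1-(|A|-1)\nu$ is positive (and, en route, that $G$ is invertible via a Neumann-series or Gershgorin argument). The delicate point is justifying invertibility and the rearrangement simultaneously; working directly with the identity $(I+R)z=x_A$ sidesteps the need to expand the inverse explicitly.
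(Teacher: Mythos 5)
Your first two inequalities are proved exactly as in the paper (Cauchy--Schwarz with $\|\psi_i\|=1$, then a row-sum triangle-inequality bound using disjointness), so there is nothing to compare there. For the third inequality your route genuinely differs: the paper also writes $\Psi_A^t\Psi_A = E_{|A|}+\Phi$ with the same row-sum bound $\|\Phi\|_{\ell^\infty,\ell^\infty}\leq(|A|-1)\nu$, but then expands the inverse as a Neumann series $\sum_{k\ge0}(-\Phi)^k$ and sums the geometric series to get $\|(\Psi_A^t\Psi_A)^{-1}\|_{\ell^\infty,\ell^\infty}\leq 1/(1-(|A|-1)\nu)$, whereas you avoid expanding the inverse at all: you set $z=(\Psi_A^t\Psi_A)^{-1}x_A$, rearrange $(I+R)z=x_A$ into $z=x_A-Rz$, and solve the resulting a priori inequality $\|z\|_{\ell^\infty}\leq\|x_A\|_{\ell^\infty}+(|A|-1)\nu\|z\|_{\ell^\infty}$. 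Both arguments hinge on the identical coherence row-sum estimate, so the difference is in the final mechanism only. What the paper's series buys is that invertibility comes packaged with the bound (the series converges precisely when $\|\Phi\|_{\ell^\infty,\ell^\infty}<1$ and converges to the inverse), and it yields the full operator-norm bound. Your rearrangement is more elementary --- no convergence argument needed --- and the invertibility worry you flag at the end is actually resolved by your own estimate rather than requiring a separate Neumann or Gershgorin argument: applying the same inequality to the homogeneous system $(I+R)z=0$ gives $\|z\|_{\ell^\infty}\leq(|A|-1)\nu\|z\|_{\ell^\infty}$, which forces $z=0$ when $(|A|-1)\nu<1$, so $I+R$ is injective and hence invertible in finite dimensions. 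Stating that one line explicitly would make your proof fully self-contained.
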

\begin{proof}
If $A = \emptyset$, then the estimates are trivial. Hence we will assume $A$ is nonempty. For any $i\in A$,
\begin{equation*}
  |\psi_i^t y| \leq \|\psi_i\|\|y\|\leq \|y\|.
\end{equation*}
This shows the first inequality. Next, for any $i\in B$,
\begin{equation*}
  |\psi_i^t\Psi_A x_A| = |\sum_{j\in A} \psi_i^t\psi_jx_j|\leq \sum_{j\in A}|\psi_i^t\psi_j||x_j|\leq |A|\nu \|x_A\|_{\ell^\infty}.
\end{equation*}
This shows the second assertion. To prove the last estimate, we follow the proof strategy
of \cite[Theorem 3.5]{Tropp:2004}, i.e., applying a Neumann series method. First we note
that $\Psi_A^t\Psi_A$ has a unit diagonal because all columns of $\Psi$ are normalized. So
the off-diagonal part $\Phi$ satisfies
\begin{equation*}
  \Psi_A^*\Psi_A = E_{|A|} + \Phi,
\end{equation*}
where $E_{|A|}$ is an identity matrix.
Each column of the matrix $\Phi$ lists the inner products between one column
of $\Psi_A$ and the remaining $|A|-1$ columns. By the definition
of the mutual coherence $\nu$ and the operator norm of a matrix
\begin{equation*}
  \|\Phi\|_{\ell^\infty,\ell^\infty} = \max_{k\in A}\sum_{j\in A\setminus\{k\}} |\psi_j^t\psi_k|\leq (|A|-1)\nu.
\end{equation*}
Whenever $\|\Phi\|_{\ell^\infty,\ell^\infty}<1$, the Neumann series $\sum_{k=0}^\infty(-\Phi)^k$ converges to the inverse $(E_{|A|}+\Phi)^{-1}$.
Hence, we may compute
\begin{equation*}
  \begin{aligned}
    \|(\Psi_A^*\Psi_A)^{-1}\|_{\ell^\infty,\ell^\infty} & = \|(E_{|A|}+\Phi)^{-1}\|_{\ell^\infty,\ell^\infty}
    = \|\sum_{k=0}^\infty (-\Phi)^k\|_{\ell^\infty,\ell^\infty}\\
   &\leq \sum_{k=0}^\infty \|\Phi\|_{\ell^\infty,\ell^\infty}^k=\frac{1}{1-\|\Phi\|_{\ell^\infty,\ell^\infty}} \leq \frac{1}{1-(|A|-1)\nu}.
 \end{aligned}
\end{equation*}
The desired estimate now follows immediately.
\end{proof}

The following lemma collects some estimates on the RIP constant $\delta_s$; see \cite[Propositions 3.1
and 3.2]{NeedellTropp:2009} and  \cite[Lemma 1]{DaiMilenkovic:2009} for the proofs.
\begin{lemma}\label{lem:rip}
Let $A$ and $B$ be disjoint subsets of $S$. Then
\begin{equation*}
  \begin{aligned}
    \|\Psi_A^t\Psi_A x_A\| &\gtreqqless (1\mp \delta_{|A|})\|x_A\|,\\
    \|(\Psi_A^t\Psi_A)^{-1} x_A\| &\gtreqqless \frac{1}{1\pm \delta_{|A|}}\|x_A\|,\\
    \|\Psi_A^t\Psi_B\| &\leq \delta_{|A|+|B|},\\
    \|\Psi_A^\dag y\|& \leq \frac{1}{\sqrt{1-\delta_{|A|}}}\|y\|,\\
     \delta_{s} & \leq \delta_{s'}, \mbox{ if } s<s'. %\mbox{ is monotone increasing with respect to s}.
  \end{aligned}
\end{equation*}
\end{lemma}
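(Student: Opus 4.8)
The plan is to reduce all five estimates to a single spectral fact: whenever $|A| \le s$, the symmetric matrix $M := \Psi_A^t\Psi_A$ has all of its eigenvalues contained in the interval $[1-\delta_{|A|}, 1+\delta_{|A|}]$. To see this, I would embed any $x_A \in \mathbb{R}^{|A|}$ into $\mathbb{R}^p$ by zero-padding outside $A$; the resulting vector $x$ has $\|x\|_0 \le |A|$, with $\|\Psi x\|^2 = x_A^t M x_A$ and $\|x\|^2 = \|x_A\|^2$. The RIP definition then says precisely that the Rayleigh quotient $x_A^t M x_A/\|x_A\|^2$ lies in $[1-\delta_{|A|}, 1+\delta_{|A|}]$ for all $x_A \ne 0$, and by the Rayleigh--Ritz (min-max) characterization this pins the spectrum of $M$ into that interval. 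Since $\delta_{|A|}<1$, the matrix $M$ is positive definite, hence invertible.

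From this, the estimates for $M$ and $M^{-1}$ are immediate. As $M$ is symmetric positive definite, its singular values coincide with its eigenvalues, so writing $x_A$ in an eigenbasis and using the spectral decomposition squeezes $\|M x_A\|$ between $(1-\delta_{|A|})\|x_A\|$ and $(1+\delta_{|A|})\|x_A\|$, giving the first assertion; the eigenvalues of $M^{-1}$ are the reciprocals, lying in $[1/(1+\delta_{|A|}), 1/(1-\delta_{|A|})]$, which yields the second. For the pseudoinverse bound I would write $\Psi_A^\dag = M^{-1}\Psi_A^t$ and note that the nonzero singular values of $\Psi_A$ are the square roots of the eigenvalues of $M$, hence lie in $[\sqrt{1-\delta_{|A|}}, \sqrt{1+\delta_{|A|}}]$, so those of $\Psi_A^\dag$ are their reciprocals; the largest is $1/\sqrt{1-\delta_{|A|}}$, which bounds $\|\Psi_A^\dag\|$ and gives the fourth assertion. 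The monotonicity $\delta_s \le \delta_{s'}$ for $s<s'$ is the easiest point: every $x$ with $\|x\|_0 \le s$ also satisfies $\|x\|_0 \le s'$, so $\delta_{s'}$ is an admissible RIP constant at level $s$, and minimality of $\delta_s$ forces $\delta_s \le \delta_{s'}$.

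The one estimate that does not come from a single Gram matrix is the cross term $\|\Psi_A^t\Psi_B\| \le \delta_{|A|+|B|}$, which I expect to be the main obstacle; it requires the disjointness of $A$ and $B$ together with a polarization argument. The plan is to fix unit vectors $u \in \mathbb{R}^{|A|}$ and $v \in \mathbb{R}^{|B|}$ and estimate $u^t\Psi_A^t\Psi_B v = \langle \Psi_A u, \Psi_B v\rangle$. Zero-padding $u$ and $v$ to vectors on $A \cup B$, disjointness of the supports gives $\|u \pm v\|_0 \le |A|+|B|$ and $\|u\pm v\|^2 = 2$. Applying the RIP bound at level $|A|+|B|$ to $u+v$ and to $u-v$ and subtracting, the polarization identity $\|\Psi(u+v)\|^2 - \|\Psi(u-v)\|^2 = 4\langle \Psi_A u, \Psi_B v\rangle$ leaves $4|\langle \Psi_A u, \Psi_B v\rangle| \le 2(1+\delta_{|A|+|B|}) - 2(1-\delta_{|A|+|B|}) = 4\delta_{|A|+|B|}$. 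Taking the supremum over unit $u,v$ gives the third assertion. The subtlety to watch is that the normalization $\|u\pm v\|^2 = 2$ relies on $\langle u, v\rangle = 0$, which is exactly where the disjointness of $A$ and $B$ is used.
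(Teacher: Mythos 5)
Your proof is correct in all five assertions, but note that the paper itself offers no proof of this lemma: it simply points to \cite[Propositions 3.1 and 3.2]{NeedellTropp:2009} and \cite[Lemma 1]{DaiMilenkovic:2009}, so you are supplying an argument that the paper outsources to the literature. Your treatment of four of the estimates (the two-sided bounds for $\Psi_A^t\Psi_A$ and its inverse via Rayleigh--Ritz localization of the spectrum in $[1-\delta_{|A|},1+\delta_{|A|}]$, the pseudoinverse bound via singular values, and the monotonicity of $\delta_s$) matches the standard proofs in those references. Where you genuinely diverge is the cross term $\|\Psi_A^t\Psi_B\|\leq\delta_{|A|+|B|}$: you prove it by polarization, applying the RIP at level $|A|+|B|$ to $u+v$ and $u-v$ and using $\langle u,v\rangle=0$ (this is essentially the original Cand\`es--Tao argument), whereas the proof cited by the paper observes that $\Psi_A^t\Psi_B$ is a submatrix of $\Psi_{A\cup B}^t\Psi_{A\cup B}-E$ (with $E$ the identity) and that the spectral norm of a submatrix never exceeds that of the full matrix, so the bound follows at once from the spectral localization you had already established. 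The two routes are equally rigorous; yours is more self-contained, needing no auxiliary fact about submatrix norms, while the submatrix argument is shorter once the Gram-matrix bound is in hand. You also correctly identified the one place where disjointness of $A$ and $B$ enters the polarization step, namely the normalization $\|u\pm v\|^2=2$.
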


The next lemma gives some crucial estimates for one-step primal dual active set iteration on the active set $A$.
These estimates provide upper bounds on the dual variable $d=\Psi^t(y-\Psi x)$ and the error $\bar{x}_A=x_A-x^*_A$
on the active set $A$. They will play an essential role for subsequent analysis, including the convergence of the algorithm.
\begin{lemma}\label{lem:est-onestep}
For any set ${A}\subseteq S$ with $|{A}|\leq T$, let %${A}_1 = {A}\cap{A}^*$, ${A}_2 = {A}\backslash{A}_1$,
${B} = {A}^*\setminus {A}$ and $I=S\setminus {A}$, and consider the following primal dual iteration on $A$
\begin{equation*}
x_{{A}} = \Psi_{{A}}^\dag y, \quad x_{{I}} = 0,\quad d = \Psi^t( y -\Psi x).
\end{equation*}
Then the quantities $\bar{x}_A\equiv x_{{A}} - x^*_{{A}}$ and $d$ satisfy the following estimates.
\begin{itemize}
\item[(a)] If $\nu < 1/(T-1)$, then $d_A = 0 $ and
\begin{equation*}
  \begin{aligned}
       \|\bar{x}_A\|_{\ell^\infty}& \leq \frac{1}{1-(|A| -1)\nu}\left(|B|\nu \|x_B^*\|_{\ell^\infty} + \epsilon\right),\\ %[1.2ex]
     |d_{j}| & \geq |x_j^*| - \|x_B^*\|_{\ell^\infty}(|B|-1)\nu - \epsilon - |A|\nu \|\bar{x}_A\|_{\ell^\infty}, \quad \forall j\in B,\\ %[1.2ex]
     |d_j| &\leq |B|\nu \|x_B^*\|_{\ell^\infty} + \epsilon + |A|\nu\|\bar{x}_A\|_{\ell^\infty}, \quad \forall j\in I^*\cap I. %\label{equ:mc3}
  \end{aligned}
\end{equation*}
\item[(b)] If the RIP is satisfied for sparsity level $s:=\max\{|A| + |B|, T+1\}$, then $d_A = 0$ and
\begin{equation*}
   \begin{aligned}
     \|\bar{x}_{A}\|& \leq \frac{\delta_{|{A}| + |{B}|}}{1-\delta_{|{A}|}}\|x^*_{B}\| + \frac{1}{\sqrt{1-\delta_{|{A}|}}}\epsilon,\\ %[1.2ex]
     |d_{j}| & \geq |x^*_j| - \delta_{|B|}\|x_{{B}}^*\| - \epsilon - \delta_{|A|+1}\|\bar{x}_{{A}}\| , \quad \forall j\in B,\\% [1.2ex]
     |d_j|&\leq \delta_{|B|+1}\|x_{{B}}^*\| + \epsilon + \delta_{|A|+1}\|\bar{x}_{{A}}\| , \quad \forall j\in{I}^*\cap{I},
   \end{aligned}
\end{equation*}
\end{itemize}
\end{lemma}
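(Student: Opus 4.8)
The plan is to reduce both parts to the explicit data model and then apply Lemma \ref{lem:mc} (for (a)) and Lemma \ref{lem:rip} (for (b)) term by term, so the only real work is careful index bookkeeping. First I would record the decomposition induced by the splitting $A^* = (A\cap A^*)\cup B$ with $B=A^*\setminus A$. Since $x^*$ is supported on $A^*$ and vanishes on $A\setminus A^*$, we have $\Psi x^* = \Psi_A x_A^* + \Psi_B x_B^*$, and hence $y = \Psi_A x_A^* + \Psi_B x_B^* + \eta$. Substituting this into $x_A = \Psi_A^\dag y = (\Psi_A^t\Psi_A)^{-1}\Psi_A^t y$ and subtracting $x_A^*$ yields the key error identity $\bar{x}_A = (\Psi_A^t\Psi_A)^{-1}(\Psi_A^t\Psi_B x_B^* + \Psi_A^t\eta)$. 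Invertibility of $\Psi_A^t\Psi_A$ is guaranteed in case (a) by $(|A|-1)\nu\le (T-1)\nu<1$ and in case (b) by $\delta_{|A|}<1$.

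For the norm bound on $\bar{x}_A$ in (a), I would take the induced $\ell^\infty$-operator norm and combine the three estimates of Lemma \ref{lem:mc}: the inverse bound $1/(1-(|A|-1)\nu)$, the cross-term bound $\|\Psi_A^t\Psi_B x_B^*\|_{\ell^\infty}\le |B|\nu\|x_B^*\|_{\ell^\infty}$, and $\|\Psi_A^t\eta\|_{\ell^\infty}\le\|\eta\|\le\epsilon$. For (b), I would keep the two summands of $\bar{x}_A$ separate, bounding the first by $\|(\Psi_A^t\Psi_A)^{-1}\|\,\|\Psi_A^t\Psi_B\|\,\|x_B^*\|\le \frac{\delta_{|A|+|B|}}{1-\delta_{|A|}}\|x_B^*\|$ and the second by $\|\Psi_A^\dag\eta\|\le\frac{1}{\sqrt{1-\delta_{|A|}}}\epsilon$, both straight from Lemma \ref{lem:rip}. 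The identity $d_A=0$ is immediate from the normal equations, since $d_A=\Psi_A^t(y-\Psi_A x_A)=\Psi_A^t y - \Psi_A^t\Psi_A(\Psi_A^t\Psi_A)^{-1}\Psi_A^t y=0$.

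For the dual estimates, I would first write the residual as $y-\Psi x = \Psi_B x_B^* + \eta - \Psi_A\bar{x}_A$, so that for every $j\in I=S\setminus A$ we have $d_j = \psi_j^t\Psi_B x_B^* + \psi_j^t\eta - \psi_j^t\Psi_A\bar{x}_A$. The terms $|\psi_j^t\eta|\le\epsilon$ and $|\psi_j^t\Psi_A\bar{x}_A|$ are handled uniformly: disjointness of $\{j\}$ and $A$ gives the bound $|A|\nu\|\bar{x}_A\|_{\ell^\infty}$ in (a) and $\delta_{|A|+1}\|\bar{x}_A\|$ in (b).

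The one point that needs genuine care, and the main obstacle, is distinguishing $j\in B$ from $j\in I^*\cap I$ in the treatment of $\psi_j^t\Psi_B x_B^*$. For $j\in B\subseteq A^*$ the index $j$ lies in $B$, so I must peel off the diagonal contribution $\psi_j^t\psi_j x_j^* = x_j^*$, writing $\psi_j^t\Psi_B x_B^* = x_j^* + \psi_j^t\Psi_{B\setminus\{j\}}x_{B\setminus\{j\}}^*$, and bound the remainder by $(|B|-1)\nu\|x_B^*\|_{\ell^\infty}$ in (a) or by $\|\Psi_{\{j\}}^t\Psi_{B\setminus\{j\}}\|\,\|x_B^*\|\le\delta_{|B|}\|x_B^*\|$ in (b); the reverse triangle inequality then produces the lower bounds on $|d_j|$. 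For $j\in I^*\cap I$ we instead have $x_j^*=0$ and $j\notin B$, so $\psi_j^t\Psi_B x_B^*$ is a genuine cross term bounded by $|B|\nu\|x_B^*\|_{\ell^\infty}$ in (a) or $\delta_{|B|+1}\|x_B^*\|$ in (b), giving the upper bounds. I would close by checking that all RIP orders invoked, namely $|A|,\ |A|+|B|,\ |B|,\ |A|+1,\ |B|+1$, do not exceed $s=\max\{|A|+|B|,T+1\}$ since $|A|,|B|\le T$, with the monotonicity $\delta_s\le\delta_{s'}$ from Lemma \ref{lem:rip} filling any gaps.
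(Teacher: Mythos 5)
Your proposal is correct and follows essentially the same route as the paper's proof: the error identity $\bar{x}_A=(\Psi_A^t\Psi_A)^{-1}\Psi_A^t(\Psi_B x_B^*+\eta)$, the residual decomposition $y-\Psi x=\Psi_B x_B^*+\eta-\Psi_A\bar{x}_A$, the peeling of the diagonal term $x_j^*$ for $j\in B$ versus the pure cross term for $j\in I^*\cap I$, all closed by termwise application of Lemmas \ref{lem:mc} and \ref{lem:rip}. The only cosmetic difference is that the paper writes out the RIP case in detail and notes that the MIP case is analogous, whereas you treat both in parallel; your final check that all RIP orders stay within $s=\max\{|A|+|B|,T+1\}$ is a detail the paper leaves implicit.
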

\begin{proof}
We show only the estimates under the RIP condition and using Lemma \ref{lem:rip}, and that for the MIP
condition follows similarly from Lemma \ref{lem:mc}. If $A= \emptyset$, then all the estimates clearly hold.
In the case $A\neq \emptyset$, then by the assumption, $\Psi_A^t\Psi_A$ is invertible. By the definition of the update $x_A$ and
the data $y$ we deduce that
\begin{equation*}
d_A = \Psi_A^t( y - \Psi_A x_A) = 0,
\end{equation*}
and
\begin{equation*}
  \begin{aligned}
    \bar{x}_{{A}} &= (\Psi^t_{A}\Psi_{A})^{-1}\Psi^t_{{A}} (\Psi_{{A}^*} x_{{A}^*}^* + \eta - \Psi_{{A}} x_{{A}}^*)\\
      &= (\Psi^t_{A}\Psi_{A})^{-1}\Psi^t_A (\Psi_{B} x^*_{B} + \eta).
  \end{aligned}
\end{equation*}
Consequently, by Lemma \ref{lem:rip} and the triangle inequality, there holds
\begin{equation*}
  \begin{aligned}
    \|\bar{x}_A\| & \leq \frac{1}{1-\delta_{|A|}}\|\Psi^t_A\Psi_B x_B^*\| +\|\Psi_A^\dag \eta\|\\
      & \leq \frac{1}{1-\delta_{|A|}}\delta_{|A|+|B|}\|x_B^*\| + \frac{1}{\sqrt{1-\delta_{|A|}}}\epsilon.
  \end{aligned}
\end{equation*}
It follows from the definition of the dual variable $d$, i.e.,
\begin{equation*}
   d_j = \psi^t_j(y - \Psi_{A}x_{A}) = \psi^t_j(\Psi_{B} x^*_{{B}} + \eta - \Psi_{A} \bar{x}_{A}),
\end{equation*}
Lemma \ref{lem:rip}, and $\psi_j^t\psi_j=1$ that for any $j\in B$, there holds
\begin{equation*}
  \begin{aligned}
    |d_j| & = |\psi^t_j\psi_j x_j^* + \psi_j^t(\Psi_{B\setminus\{j\}} x_{B\setminus\{j\}}^* + \eta - \Psi_A\bar{x}_A)|\\
       & \geq |x_j^*| - (|\psi_j^t\Psi_{B\setminus\{j\}} x_{B\setminus\{j\}}^*| + |\psi_j^t\eta| + |\psi_j^t\Psi_A\bar{x}_A|)\\
       & \geq |x_j^*| - \delta_{|B|}\|x_B^*\| - \epsilon - \delta_{|A|+1}\|\bar{x}_{A}\|.
  \end{aligned}
\end{equation*}
Similarly, for any $j\in I^*\cap I $, there holds
\begin{equation*}
  |d_j| \leq \delta_{|B|+1} \|x^*_B\|+\epsilon + \delta_{|A|+1}\|\bar{x}_A\|.
\end{equation*}
\begin{comment}
Next we turn to case of mutual incoherence property. Like before, we deduce from Lemma \ref{lem:mc}
and the triangle inequality that
\begin{equation*}
  \begin{aligned}
    \|\bar{x}_A\|_{\ell^\infty} & \leq \frac{1}{1-(|A|-1)\nu}(\|\Psi^t_A\Psi_B x_B^*\|_{\ell^\infty} +\|\Psi_A^t \eta\|_{\ell^\infty})\\
      & \leq \frac{1}{1-(|A|-)\nu}(|B|\nu\|x_B^*\|_{\ell^\infty}+ \epsilon).
  \end{aligned}
\end{equation*}
Similarly, for any $j\in B$, we have
\begin{equation*}
  \begin{aligned}
    |d_j| & \geq |x_j^*| - (|\Psi_j^t\Psi_{B\setminus\{j\}} x_{B\setminus\{j\}}^*| + |\Psi_j^t\eta| + |\Psi_j^t\Psi_A\bar{x}_A|)\\
          & \geq |x_j^*| - (|B|-1)\nu\|x_B^*\|_{\ell^\infty} - \epsilon - |A|\nu\|\bar{x}_{A}\|_{\ell^\infty}.
  \end{aligned}
\end{equation*}
Last, for $j\in B\cap I$, there holds
\begin{equation*}
  |d_j| \leq |B|\nu\|x^*_B\|_{\ell^\infty}+\epsilon + |A|\nu\|\bar{x}_A\|_{\ell^\infty}.
\end{equation*}
\end{comment}
This completes the proof of the lemma.
\end{proof}

\subsection{Coordinatewise minimizer}
Next we characterize minimizers to problem \eqref{equ:l0reg}. Due to the nonconvexity and discontinuity of
the function $\|x\|_0$, the classical theory \cite{ItoKunisch:2008} on the existence of a Lagrange multiplier
cannot be applied directly to show the equivalence between problem \eqref{equ:l0reg} and the Lagrange counterpart \eqref{l0noise}.
Nonetheless, both formulations aim at recovering the true sparse signal $x^*$, and thus we expect that
they are closely related to each other. We shall establish below that with the parameter $\lambda$
properly chosen, the oracle solution $x^o$ is the only global minimizer of problem \eqref{equ:l0reg},
and as a consequence, we derive directly the equivalence between problems \eqref{l0noise} and \eqref{equ:l0reg}.

To this end, we first characterize the minimizers of problem \eqref{equ:l0reg}. Since the cost
function $J_\lambda(x)$ is nonconvex and discontinuous, instead of a global minimizer, we study its
coordinatewise minimizers, following \cite{Tseng:2001}. A vector $x=(x_1,x_2, \dots,x_p)^t\in\mathbb{R}^p$
is called a coordinatewise minimizer to $J_\lambda(x)$ if it is the minimum along
each coordinate direction, i.e.,
\begin{equation*}
x_i \in \mathop\mathrm{arg}\min\limits_{t\in\mathbb{R}}  J_\lambda(x_1,...,x_{i-1},t,x_{i+1},...,x_p).
\end{equation*}
The necessary and sufficient condition for a coordinatewise minimizer $x$ is given by \cite{ItoKunisch:2014,JiaoJinLu:2013}:
\begin{equation}\label{equ:con1}
x_i \in S^{\ell^0}_\lambda(x_i + d_i)\quad \forall i\in S,
\end{equation}
where $d = \Psi^t(y - \Psi x)$ denotes the dual variable, and $S^{\ell^0}_\lambda$ is the hard thresholding operator defined by
\begin{equation}\label{equ:thres}
S^{\ell^0}_{\lambda}(v)  \left\{\begin{array}{ll} =0, & |v| < \sqrt{2\lambda}, \\[1.3ex]
 \in \{0, \mbox{sgn}(v)\sqrt{2\lambda}\}, & |v| = \sqrt{2\lambda}, \\[1.3ex]
 = v, & |v| > \sqrt{2\lambda}. \end{array}\right.
\end{equation}
The condition \eqref{equ:con1} can be equivalently written as
\begin{equation*}
\left\{\begin{array}{l}
|x_i + d_i| > \sqrt{2\lambda} \Rightarrow d_i = 0,\\[1.3ex]
|x_i + d_i| < \sqrt{2\lambda} \Rightarrow x_i = 0,\\[1.3ex]
|x_i + d_i| = \sqrt{2\lambda} \Rightarrow x_i = 0 \mbox{ or } d_i =0.
\end{array}
\right.
\end{equation*}
Consequently, with the active set ${A} = \{i: x_i\neq 0\}$, there holds
\begin{equation}\label{equ:con2}
\min\limits_{i\in{A}} |x_i| \geq \sqrt{2\lambda} \geq \|d\|_{\ell^\infty}.
\end{equation}

It is known that any coordinatewise minimizer $x$ is a local minimizer \cite{JiaoJinLu:2013}.
To further analyze the coordinatewise minimizer, we need the following assumption on the noise level $\epsilon$:
\begin{assumption}\label{ass:epsilon}
The noise level $\epsilon$ is small in the sense $\epsilon\leq \beta\min_{i\in A^*}|x_i^*|$, $0\leq \beta<1/2$.
\end{assumption}
\begin{comment}
\begin{itemize}
\item[(H1a)] MIP condition: $\nu < \frac{1-2\beta}{2T - 1}$, with $0\leq \beta <\frac{1}{2}$.
\item[(H1a')] MIP condition: $\nu < \frac{1-2\beta}{3T - 1}$, with $0\leq \beta <\frac{1}{2}$.
\item[(H1b)] The noise level $\epsilon$ is small in the sense $\epsilon\leq \beta\min_{i\in A^*}|x_i^*|$, $0\leq \epsilon<1/2$.
\end{itemize}
\begin{itemize}
\item[(H2a)] RIP condition: $\delta \triangleq \delta_{T+1} \leq \frac{1-2\beta}{2\sqrt{T}+1}$, with $0\leq \beta < \frac{1}{2}$.
\item[(H2a')] RIP condition: $\delta \triangleq \delta_{2T} \leq \frac{1-2\beta}{2\sqrt{T}+1}$, with $0\leq \beta < \frac{1}{2}$.
\item[(H2b)] Noise level is small in the sense: $\epsilon\leq \beta\min_{i\in A^*}|x_i^*|$.
\end{itemize}
\end{comment}

The next lemma gives an interesting characterization of the active set of the coordinatewise minimizer.
\begin{lemma}\label{lem:cwm}
Let Assumption \ref{ass:epsilon} hold, and $x$ be a coordinatewise minimizer with support ${A}$ and
$|A|\leq T$. If either (a) $\nu < (1-2\beta)/(3T - 1)$ or (b) $\delta \triangleq \delta_{2T} \leq
(1-2\beta)/(2\sqrt{T}+1)$ holds, then ${A}\subseteq {A}^*$.
\end{lemma}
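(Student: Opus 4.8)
The plan is to argue by contradiction, showing that the coordinatewise minimizer $x$ has no ``false positives'', i.e.\ that $F:=A\setminus A^*=\emptyset$, which is precisely the assertion $A\subseteq A^*$. First I would observe that, since $x$ is a coordinatewise minimizer with support $A$, condition \eqref{equ:con2} forces $d_A=0$, so that $x_A=\Psi_A^\dag y$ and $x_I=0$; thus $x$ is exactly the one-step primal--dual iterate treated in Lemma \ref{lem:est-onestep} on the set $A$ (the matrix $\Psi_A^t\Psi_A$ being invertible for $|A|\le T$ under either hypothesis). Hence all the estimates of Lemma \ref{lem:est-onestep} are available. Next, assuming $F\neq\emptyset$, I would note that $|A\cap A^*|=|A|-|F|\le T-1$, so that $B=A^*\setminus A$ satisfies $|B|=T-|A\cap A^*|\ge|F|\ge1$; in particular $B\neq\emptyset$ and $M:=\|x_B^*\|_{\ell^\infty}=\max_{i\in B}|x_i^*|\ge\min_{i\in A^*}|x_i^*|>0$ (the case $x^*=0$ being trivial).

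The heart of the argument is to squeeze the threshold $\sqrt{2\lambda}$ between two competing bounds. On one hand, picking any $j\in F$ and using $x_j^*=0$, the relation \eqref{equ:con2} gives $\sqrt{2\lambda}\le\min_{i\in A}|x_i|\le|x_j|=|\bar{x}_j|\le\|\bar{x}_A\|$, an upper bound on $\sqrt{2\lambda}$. On the other hand, choosing $k\in B$ with $|x_k^*|=M$ and using $\sqrt{2\lambda}\ge\|d\|_{\ell^\infty}\ge|d_k|$ together with the lower bound for $|d_k|$ in Lemma \ref{lem:est-onestep}, I obtain a lower bound on $\sqrt{2\lambda}$ in terms of $M$, $\|x_B^*\|$, $\epsilon$ and $\|\bar{x}_A\|$. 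Combining the two and moving the $\|\bar{x}_A\|$ terms to one side yields
\begin{equation*}
M-\delta_{|B|}\|x_B^*\|-\epsilon\le(1+\delta_{|A|+1})\|\bar{x}_A\|,
\end{equation*}
which must hold whenever $F\neq\emptyset$; the MIP case is identical with $\ell^\infty$ norms, the constants $\delta_{|B|},\delta_{|A|+1}$ replaced by $(|B|-1)\nu,\,|A|\nu$, and $M=\|x_B^*\|_{\ell^\infty}$ entering directly without any dimensional factor.

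The main obstacle, and the quantitative core of the proof, is to show that this inequality is in fact impossible under the stated hypotheses, thereby forcing $F=\emptyset$. I would insert the a priori bound $\|\bar{x}_A\|\le\frac{\delta}{1-\delta}\|x_B^*\|+\frac{\epsilon}{\sqrt{1-\delta}}$ from Lemma \ref{lem:est-onestep}(b), bound every RIP constant by $\delta=\delta_{2T}$ (legitimate since $|A|+|B|\le 2T$ and $|A|+1\le T+1\le 2T$), and use $\|x_B^*\|\le\sqrt{T}\,M$ and $\epsilon\le\beta M$. After dividing by $M>0$, everything reduces to the scalar inequality
\begin{equation*}
(1+\delta)\Big(\tfrac{\delta\sqrt{T}}{1-\delta}+\tfrac{\beta}{\sqrt{1-\delta}}\Big)<1-\delta\sqrt{T}-\beta,
\end{equation*}
and clearing the denominator by $(1-\delta)$, using $\sqrt{1-\delta}<1$ for the noise term, collapses this after cancellation to exactly $\delta(2\sqrt{T}+1)<1-2\beta$, which is hypothesis (b). The analogous reduction in the MIP case, after the same manipulations with $b=(T-1)\nu$, leads to an inequality that is implied with room to spare by $\nu<(1-2\beta)/(3T-1)$, i.e.\ hypothesis (a). The only delicate point is the boundary between strict and non-strict inequality at $\delta(2\sqrt{T}+1)=1-2\beta$; this is absorbed by the strictness already present in the thresholding relation \eqref{equ:con2} and the RIP estimates, so that the contradiction is genuine and the proof is complete.
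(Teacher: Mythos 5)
Your proposal is correct and takes essentially the same route as the paper's proof: you identify the coordinatewise minimizer with the one-step iterate of Lemma \ref{lem:est-onestep}, note that $A\nsubseteq A^*$ forces $B=A^*\setminus A\neq\emptyset$, squeeze $\sqrt{2\lambda}$ between $\|\bar{x}_A\|$ (via a false-positive index) and the dual lower bound at the largest entry of $x_B^*$ — exactly the paper's chain \eqref{eqn:dcomx} — and reduce to the same scalar inequalities $\delta(2\sqrt{T}+1)<1-2\beta$ and, with room to spare, $\nu(3T-1)<1-2\beta$. The one caveat you flag, that the non-strict hypothesis $\delta\leq(1-2\beta)/(2\sqrt{T}+1)$ only gives a non-strict inequality in the squeeze, is a defect shared by the paper itself, whose proof silently invokes the strict inequality $\delta<(1-2\beta)/(2\sqrt{T}+1)$, so it should not be counted against your argument.
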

\begin{proof}
Let $I=S\setminus A$. Since $x$ is a coordinatewise minimizer, it follows from \eqref{equ:con2} that
\begin{equation*}
   x_A = \Psi_A^\dag y,\quad x_I=0, \quad d = \Psi^t(y - \Psi x).
\end{equation*}
We prove the assertions by means of contradiction. Assume the contrary, i.e., $A
\nsubseteq A^*$. We let $B=A^*\setminus A$, which is nonempty by assumption, and
denote by $i_{A} \in \{i\in I: {|x_i^*| }= \|x^*_B\|_{\ell^\infty}\}$. Then $i_A
\in B$. Further by \eqref{equ:con2}, there holds
\begin{equation}\label{eqn:dcomx}
   |d_{i_A}| \leq \|d\|_{\ell^\infty} \leq \min_{i\in A}|x_i| \leq \min_{i\in A\backslash A^*} |x_i| \leq \|\bar{x}_A\|_{\ell^\infty} \leq \|\bar{x}_A\|.
\end{equation}
Now we discuss the two cases separately.\\
\noindent {\bf Case (a)}. By Lemma \ref{lem:mc}, $\epsilon \leq \beta \min_{i\in A^*}|x_i^*| \leq \beta
\|x_B^*\|_{\ell^\infty}$ from Assumption \ref{ass:epsilon} and the choice of the index $i_A$, we have
\begin{equation*}
  \begin{aligned}
    \|\bar{x}_A\|_{\ell^\infty} &\leq  \frac{1}{1-(|A| -1)\nu}\left(|B|\nu \|x_B^*\|_{\ell^\infty} + \epsilon\right) \\
     & \leq \frac{1}{1-(|A|-1)\nu}(|B|\nu+\beta)\|x_B^*\|_{\ell^\infty},\\
    |d_{i_A}| & \geq  |x_{i_A}^*| - \|x_B^*\|_{\ell^\infty}(|B|-1)\nu - \epsilon - |A|\nu \|\bar{x}_A\|_{\ell^\infty}\\
         & \geq \|x_B^*\|_{\ell^\infty}\left(1-(|B|-1)\nu - \beta - |A|\nu \frac{1}{1-(|A|-1)\nu}(|B|\nu+\beta)\right).
         %& = \frac{\|x_B^*\|_{\ell^\infty}}{1-(|A|-1)\nu}\left[(1-(|B|-1)\nu - \beta)(1-(|A|-1)\nu)-|A|\nu(|B|\nu+\beta)\right].
  \end{aligned}
\end{equation*}
Consequently, we deduce
\begin{equation*}
  \begin{aligned}
    |d_{i_A}| - \|\bar{x}_A\|_{\ell^\infty} &\geq
    %\frac{\|x^*_B\|_{\ell^\infty}}{1 - (|A|-1)\nu}\left[(1- (|B|-1)\nu - \beta)(1 - (|A|-1)\nu) - |A|\nu(|B|\nu + \beta){-(|B|\nu+\beta)}\right] \\
    %& =
    \frac{\|x^*_B\|_{\ell^\infty}}{1 - (|A|-1)\nu}\left[ 1- (|A| + 2|B|)\nu - (|A|+|B|)\nu^2 + 2\nu +\nu^2 - \beta(\nu+2)\right]\\
   & \geq \frac{\|x^*_B\|_{\ell^\infty}}{1-(|A|-1)\nu}\left[1-3T\nu + \nu-2\beta + \nu(1 - \beta - 2T\nu)\right]\\
    & \geq \frac{\|x^*_B\|_{\ell^\infty}}{1-(|A|-1)\nu}\left[1-(3T-1)\nu-2\beta\right]>0,
  \end{aligned}
\end{equation*}
under assumption (a) $\nu<(1-2\beta)/(3T-1)$. %which holds under assumption (H1a').
This leads to a contradiction to \eqref{eqn:dcomx}.\\
\noindent {\bf Case (b)}. By assumption, $|A| + |B|\leq 2T$ and by Lemma
\ref{lem:rip}, there hold
\begin{equation*}
  \begin{aligned}
     \|\bar{x}_A\| &\leq \frac{\delta}{1-\delta}\|x^*_B\| + \frac{1}{\sqrt{1-\delta}}\epsilon\\
       &\leq \frac{\delta}{1-\delta}\|x^*_B\| + \frac{1}{1-\delta}\epsilon,\\
    |d_{i_A}|& \geq |x^*_{i_A}| - \delta\|x_{{B}}^*\| - \epsilon - \delta\|\bar{x}_{{A}}\| \\
       & \geq |x^*_{i_A}| - \frac{\delta}{1-\delta}\|x_B^*\| - \frac{1}{1-\delta}\epsilon.
  \end{aligned}
\end{equation*}
Consequently, with the assumption on $\epsilon$ and $\delta<\frac{1-2\beta}{2\sqrt{T}+1}$, we get
\begin{equation*}
  \begin{aligned}
    |d_{i_A}| - \|\bar{x}_A\| &\geq |x^*_{i_A}| - \frac{2\delta}{1-\delta}\|x_B^*\| - \frac{2}{1-\delta}\epsilon \\
   &\geq  |x^*_{i_A}|\left(1 - \frac{2\sqrt{T}\delta + 2\beta}{1-\delta}\right) > 0,
  \end{aligned}
\end{equation*}
which is also a contradiction to \eqref{eqn:dcomx}. This completes the proof of the lemma.
\end{proof}

From Lemma \ref{lem:cwm}, it follows if the support size of the active set of the coordinatewise minimizer can be
controlled, then we may obtain information of the true active set $A^*$. However, a local minimizer generally
does not yield such information; see following result. The proof can be found also in \cite{Nikolova:2013},
but we include it here for completeness.

\begin{proposition}\label{prop:localmin}
for any given index set $A\subseteq S$, the solution $x$ to the least-squares problem
$\mathrm{min}_{\mathrm{supp}(x)\subseteq A} \|\Psi x - y\|$
is a local minimizer.
\end{proposition}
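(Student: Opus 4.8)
The plan is to verify the definition of a local minimizer of $J_\lambda$ directly: I want to exhibit a radius $\rho>0$ such that $J_\lambda(z)\geq J_\lambda(x)$ whenever $\|z-x\|<\rho$. Write $A_0=\mathrm{supp}(x)=\{i:x_i\neq0\}$; by the definition of the constrained least-squares solution we have $x_A=\Psi_A^\dag y$, $x_{S\setminus A}=0$, and hence $A_0\subseteq A$ (possibly strictly, since the least-squares solution may have extra zero entries inside $A$). The starting observation is that the $\ell^0$ term can only grow under small perturbations: provided $\rho<\min_{i\in A_0}|x_i|$ (take $\rho$ an arbitrary positive number if $A_0=\emptyset$), every $z$ with $\|z-x\|<\rho$ keeps all entries indexed by $A_0$ nonzero, so $\mathrm{supp}(z)\supseteq A_0$ and therefore $\|z\|_0\geq\|x\|_0$. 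I would then split the estimate according to whether the support genuinely enlarges.

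First I would treat the case $\|z\|_0=\|x\|_0$. Together with $\mathrm{supp}(z)\supseteq A_0$ and the equality of cardinalities, this forces $\mathrm{supp}(z)=A_0\subseteq A$, so $z$ is admissible for the constrained least-squares problem that defines $x$. By the optimality of $x$ we then get $\tfrac12\|\Psi z-y\|^2\geq\tfrac12\|\Psi x-y\|^2$, and since the two $\ell^0$ terms coincide, $J_\lambda(z)\geq J_\lambda(x)$ follows at once.

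Second I would treat the case $\|z\|_0\geq\|x\|_0+1$, in which the penalty jumps up by at least $\lambda$; the point is that the smooth term $g(z)=\tfrac12\|\Psi z-y\|^2$ cannot make up for this jump near $x$. Since $g$ is convex (its Hessian $\Psi^t\Psi$ is positive semidefinite), the gradient inequality gives $g(z)-g(x)\geq\nabla g(x)^t(z-x)\geq-\|\nabla g(x)\|\,\|z-x\|$, where $\nabla g(x)=\Psi^t(\Psi x-y)=-d$ is (minus) the dual variable. Hence shrinking the radius to $\rho\leq\lambda/\|d\|$ (with $\rho$ arbitrary when $d=0$) forces $g(z)-g(x)\geq-\lambda$, so that
\[
J_\lambda(z)-J_\lambda(x)=\bigl(g(z)-g(x)\bigr)+\lambda\bigl(\|z\|_0-\|x\|_0\bigr)\geq-\lambda+\lambda=0.
\]
Taking the smaller of the two radii from the two cases yields the neighbourhood on which $x$ minimizes $J_\lambda$.

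The computations are entirely elementary, so there is no real analytic difficulty; the step that requires care is organizing the case split correctly. In particular one must phrase the first case in terms of $\mathrm{supp}(x)$ rather than $A$, since $A_0$ may be strictly smaller than $A$, and only then does the enlarged-support alternative reliably cost a full $\lambda$ in the penalty. The convexity bound on $g$ through the dual variable $d$ is what makes the $\ell^0$ jump decisive, and it is worth emphasizing that this is precisely where the positivity of the regularization parameter $\lambda$ is used.
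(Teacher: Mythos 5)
Your proof is correct, and it reaches the conclusion by a slightly different decomposition than the paper's. The paper perturbs $x$ by $h$ with $\|h\|_{\ell^\infty}<\tau=\min\{|x_i|:x_i\neq0\}$ and splits on whether $\mathrm{supp}(h)\subseteq A$: if yes, $x+h$ is feasible for the constrained least-squares problem, so the fidelity term cannot decrease while $\|x+h\|_0\geq\|x\|_0$; if no, then $\|x+h\|_0\geq\|x\|_0+1$ and the fidelity term is controlled by plain continuity of $h\mapsto\|\Psi(x+h)-y\|$, with no explicit radius produced. You instead split on whether $\|z\|_0$ equals $\|x\|_0$ or exceeds it by at least one; the equality case forces $\mathrm{supp}(z)=\mathrm{supp}(x)\subseteq A$ and hence feasibility, and the jump case is handled quantitatively via the convexity inequality $g(z)-g(x)\geq-\|d\|\,\|z-x\|$ with $d=\Psi^t(y-\Psi x)$, which pins down the explicit radius $\min\{\min_{i\in\mathrm{supp}(x)}|x_i|,\ \lambda/\|d\|\}$. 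Your route buys a fully quantitative neighbourhood and makes visible exactly where $\lambda>0$ enters; the paper's route buys brevity, since continuity spares any gradient computation, and its case split lets support growth inside $A$ be absorbed by least-squares optimality rather than by the penalty jump. Both arguments are complete; note only that the strict inequality the paper asserts in its second case is not needed, as your non-strict bound already meets the definition of a local minimizer.
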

\begin{proof}
Let $\tau = \min \{|x_i|: x_i\neq 0\}$. Then for any small perturbation $h$ in the sense $\|h\|_{\ell^\infty}
<\tau$, we have $x_i \neq 0\rightarrow x_i + h_i \neq 0$. Now we show that $x$ is a local minimizer.
To see this, we consider two cases. First consider the case $\mathrm{supp}(h)\subseteq A$. By the definition
of $x$, and $\|x\|_0 \leq \|x+h\|_0$, we deduce
\begin{equation*}
  \begin{aligned}
    J_\lambda(x+h) &= \tfrac{1}{2}\|\Psi(x+h) - y\|^2 + \lambda\|x+h\|_0\\
     & \geq \tfrac{1}{2}\|\Psi x - y\|^2 + \lambda\|x\|_0  = J_\lambda(x).
  \end{aligned}
\end{equation*}
Alternatively, if $\mathrm{supp}(h)\nsubseteq A$, then $\|x+h\|_0 \geq \|x\|_0 + 1$. Since
\begin{equation*}
  \lim\limits_{\|h\|\rightarrow 0}\|\Psi (x+h) - y\| = \|\Psi x - y\|,
\end{equation*}
we again have $J_\lambda(x+h) > J_\lambda(x)$ for sufficiently small $h$. This completes the proof
of the proposition.
\end{proof}

Now we can study global minimizers to problem \eqref{equ:l0reg}. For any $\lambda > 0$, there
exists a global minimizer $x_\lambda$ to problem \eqref{equ:l0reg} \cite{JiaoJinLu:2013}. Further, the
following monotonicity relation holds \cite{ItoJinTakeuchi:2011}\cite[Section 2.3]{ItoJin:2014}.
\begin{lemma}\label{lem:mon}
For $\lambda_1>\lambda_2>0$, there holds
$\|x_{\lambda_1}\|_0 \leq \|x_{\lambda_2}\|_0.$
\end{lemma}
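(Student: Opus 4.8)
The plan is to exploit only the parametric structure of the functional $J_\lambda(x) = \tfrac{1}{2}\|\Psi x - y\|^2 + \lambda\|x\|_0$, namely that the data fidelity term $\tfrac{1}{2}\|\Psi x - y\|^2$ does not depend on $\lambda$ while the penalty $\|x\|_0$ enters linearly in $\lambda$. Writing $x_1 = x_{\lambda_1}$ and $x_2 = x_{\lambda_2}$ for the respective global minimizers, I would first record the two defining optimality inequalities. Since $x_1$ minimizes $J_{\lambda_1}$, comparing its value with that at $x_2$ gives
\[
\tfrac{1}{2}\|\Psi x_1 - y\|^2 + \lambda_1\|x_1\|_0 \leq \tfrac{1}{2}\|\Psi x_2 - y\|^2 + \lambda_1\|x_2\|_0,
\]
and symmetrically, since $x_2$ minimizes $J_{\lambda_2}$,
\[
\tfrac{1}{2}\|\Psi x_2 - y\|^2 + \lambda_2\|x_2\|_0 \leq \tfrac{1}{2}\|\Psi x_1 - y\|^2 + \lambda_2\|x_1\|_0.
\]

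The second step is to add these two inequalities. The data fidelity terms $\tfrac{1}{2}\|\Psi x_1 - y\|^2$ and $\tfrac{1}{2}\|\Psi x_2 - y\|^2$ then appear on both sides and cancel, leaving
\[
\lambda_1\|x_1\|_0 + \lambda_2\|x_2\|_0 \leq \lambda_1\|x_2\|_0 + \lambda_2\|x_1\|_0.
\]
Collecting the $\|x_1\|_0$ and $\|x_2\|_0$ terms yields $(\lambda_1 - \lambda_2)\|x_1\|_0 \leq (\lambda_1 - \lambda_2)\|x_2\|_0$. Since $\lambda_1 > \lambda_2$, the factor $\lambda_1 - \lambda_2$ is strictly positive, so dividing through preserves the inequality and gives $\|x_{\lambda_1}\|_0 \leq \|x_{\lambda_2}\|_0$, which is the claim.

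I do not anticipate a genuine obstacle here: the argument is the standard rearrangement (or ``crossing'') trick for parametrized families of minimizers, and it uses neither convexity nor any property of $\|\cdot\|_0$ beyond the fact that it is the identical penalty at both parameter values. The only points worth flagging are that the existence of global minimizers $x_{\lambda_1}$ and $x_{\lambda_2}$ must be invoked at the outset (guaranteed for every $\lambda > 0$, as recalled just before the statement), and that because the two optimality inequalities hold for \emph{any} minimizing pair, the conclusion remains valid for every choice of global minimizers in the case of non-uniqueness.
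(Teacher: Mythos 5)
Your proof is correct: the two optimality inequalities, added together so the fidelity terms cancel, give $(\lambda_1-\lambda_2)\|x_{\lambda_1}\|_0 \leq (\lambda_1-\lambda_2)\|x_{\lambda_2}\|_0$, and dividing by $\lambda_1-\lambda_2>0$ yields the claim. The paper itself offers no in-text proof of this lemma (it defers to the cited references on the $\ell^0$-regularized problem), and the argument used there is precisely this standard crossing/rearrangement trick, so your proposal is essentially the canonical proof and matches the intended one.
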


If the noise level $\epsilon$ is sufficiently small, and the parameter $\lambda$ is properly chosen,
the oracle solution $x^o$ is the only global minimizer to $J_\lambda(x)$, cf. Theorem \ref{thm:global},
which in particular implies the equivalence between the two formulations  \eqref{l0noise} and \eqref{equ:l0reg};
see Remark \ref{rem:eqv} below.

\begin{theorem}\label{thm:global}
Let Assumption \ref{ass:epsilon} hold.
\begin{itemize}
\item[(a)] Suppose $\nu < (1-2\beta)/(3T - 1)$ and $\beta \leq (1- 2(T-1)\nu)/({T+3})$, and let
\begin{equation*}
\xi = \frac{1 - 2(T-1)\nu - 2\beta - \beta^2}{2T}\min_{i\in A^*}|x_i^*|^2.
\end{equation*}
Then for any $\lambda\in (\epsilon^2/2, \xi)$, $x^o$ is the only global minimizer to $J_\lambda(x)$.
\item[(b)] Suppose $\delta \triangleq \delta_{2T} \leq (1-2\beta)/({2\sqrt{T}+1})$ and $\beta \leq
(1-2\delta - \delta^2)/4$, and let
\begin{equation*}
   \xi = \left[\frac{1}{2}(1-\delta) - \frac{\delta^2}{1-\delta} - \frac{\beta}{\sqrt{1-\delta}} -
   \frac{1}{2}\beta^2\right]\min_{i\in A^*}|x_i^*|^2.
\end{equation*}
Then for any $\lambda\in (\epsilon^2/2, \xi)$, $x^o$ is the only global minimizer to $J_\lambda(x)$.
\end{itemize}
\end{theorem}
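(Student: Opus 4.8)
The plan is to show that the oracle solution $x^o$ is a global minimizer and that it is the unique one, by comparing the cost $J_\lambda$ at an arbitrary candidate against $J_\lambda(x^o)$. First I would establish an upper bound on $J_\lambda(x^o)$. Since $\mathrm{supp}(x^o)\subseteq A^*$, we have $\|x^o\|_0\le T$, and the residual satisfies $\Psi x^o - y = \Psi_{A^*}\Psi_{A^*}^\dag y - y$, whose norm is bounded by $\|\eta\|\le \epsilon$ because $\Psi_{A^*}x^*_{A^*} = y-\eta$ lies in the range of $\Psi_{A^*}$. Hence $J_\lambda(x^o)\le \tfrac12\epsilon^2 + \lambda T$. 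More usefully, since $x^o=\Psi_{A^*}^\dag y$ solves the least-squares problem on $A^*$ exactly, $\tfrac12\|\Psi x^o-y\|^2\le \tfrac12\epsilon^2$, so the oracle value is essentially $\tfrac12\epsilon^2+\lambda T$.

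Next I would show no competitor can do better. Let $x$ be any global minimizer, with active set $A=\mathrm{supp}(x)$. Because a global minimizer is in particular a coordinatewise minimizer, the characterization \eqref{equ:con2} holds for $x$, so $x$ satisfies the one-step primal-dual relations $x_A=\Psi_A^\dag y$, $x_I=0$. The key preliminary reduction is to argue $|A|\le T$: using the monotonicity Lemma \ref{lem:mon} together with the fact that the oracle value $\tfrac12\epsilon^2+\lambda T$ is an available competitor, a minimizer cannot have more than $T$ active components once $\lambda>\epsilon^2/2$, since adding coordinates beyond $A^*$ costs at least $\lambda$ each while reducing the residual by too little under the RIP/MIP bounds. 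With $|A|\le T$ established, Lemma \ref{lem:cwm} applies and forces $A\subseteq A^*$.

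Having localized $A$ inside $A^*$, I would then run the estimates of Lemma \ref{lem:est-onestep} with this $A$ and $B=A^*\setminus A$. The goal is to bound $J_\lambda(x)$ from below by $\tfrac12\epsilon^2+\lambda T$, with strict inequality unless $B=\emptyset$. Writing the residual as $\Psi x - y = \Psi_B x^*_B + \eta - \Psi_A\bar{x}_A$ and expanding $\tfrac12\|\cdot\|^2$, the dominant term is $\tfrac12\|\Psi_B x^*_B\|^2\gtrsim \tfrac12(1-\delta)\|x^*_B\|^2$ (RIP) or the analogous MIP bound, while the regularization contributes $\lambda|A|=\lambda(T-|B|)$. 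Comparing against $\lambda T$, the cost gap is at least $\tfrac12(1-\delta)\|x^*_B\|^2 - \lambda|B|$ minus cross terms controlled by $\epsilon$ and $\bar x_A$. Since $\|x^*_B\|^2\ge |B|\min_{i\in A^*}|x^*_i|^2$, the condition $\lambda<\xi$ is precisely what makes this gap strictly positive whenever $B\neq\emptyset$; the explicit $\xi$ in each case absorbs the $\delta^2/(1-\delta)$, $\beta/\sqrt{1-\delta}$ and $\tfrac12\beta^2$ correction terms coming from $\|\bar x_A\|$ and the noise. Therefore any minimizer must have $B=\emptyset$, i.e.\ $A=A^*$, and then $x=\Psi_{A^*}^\dag y=x^o$.

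The main obstacle I anticipate is the bookkeeping in the lower bound for $J_\lambda(x)$: expanding the squared residual produces several cross terms ($\langle \Psi_B x^*_B,\eta\rangle$, $\langle \Psi_B x^*_B,\Psi_A\bar x_A\rangle$, etc.) that must each be bounded using Lemma \ref{lem:rip} or Lemma \ref{lem:mc} and then shown to be dominated by the leading $\tfrac12(1-\delta)\|x^*_B\|^2$ term, so that the threshold $\xi$ comes out exactly as stated rather than with a worse constant. The secondary difficulty is justifying $|A|\le T$ rigorously, since Lemma \ref{lem:cwm} presupposes it; I would handle this by a separate argument showing that if $|A|>T$ the oracle already beats $x$ for $\lambda>\epsilon^2/2$, thereby contradicting global optimality before invoking Lemma \ref{lem:cwm}.
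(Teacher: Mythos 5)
Your overall skeleton is the paper's: bound $J_\lambda(x^o)\le \tfrac12\epsilon^2+\lambda T$, rule out $|A|\ge T+1$, apply Lemma \ref{lem:cwm} to conclude $A\subseteq A^*$, and for nonempty $B=A^*\setminus A$ derive a contradiction by lower-bounding $J_\lambda(x)=\tfrac12\|\Psi_B x_B^*+\eta-\Psi_A\bar x_A\|^2+\lambda|A|$. Your case (b) computation is essentially identical to the paper's, including the comparison of $\lambda|B|$ against $\|x_B^*\|^2\ge |B|\min_{i\in A^*}|x_i^*|^2$. Your mid-proposal justification of $|A|\le T$ (via Lemma \ref{lem:mon} and ``residual reduction'' under RIP/MIP) is off target, but harmless: your closing paragraph replaces it by the correct argument, which is the paper's two lines, $J_\lambda(x)\ge\lambda\|x\|_0\ge\lambda(T+1)>\lambda T+\tfrac12\epsilon^2\ge J_\lambda(x^o)$ once $\lambda>\epsilon^2/2$; no structural assumption on $\Psi$ and no monotonicity lemma is needed there.

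The genuine gap is case (a): it cannot be run as ``the analogous MIP bound'' of your RIP expansion. Lemma \ref{lem:mc} only gives $\ell^\infty$/$\ell^1$-type control, so the cross term picks up cardinality factors: the natural estimate
\begin{equation*}
|\langle \Psi_B x_B^*,\Psi_A\bar x_A\rangle| \le |A|\,\nu\,\|\bar x_A\|_{\ell^\infty}\,\|x_B^*\|_{\ell^1}\le |A||B|\,\nu\,\|\bar x_A\|_{\ell^\infty}\,\|x_B^*\|_{\ell^\infty},
\end{equation*}
combined with $\|\bar x_A\|_{\ell^\infty}\lesssim |B|\nu\|x_B^*\|_{\ell^\infty}$ from Lemma \ref{lem:est-onestep}, is of order $|A||B|^2\nu^2\|x_B^*\|_{\ell^\infty}^2\sim T\|x_B^*\|_{\ell^\infty}^2/6$ when $\nu\sim 1/(3T)$, and this is \emph{not} dominated by the main term $\tfrac12\|\Psi_Bx_B^*\|^2\ge\tfrac12\bigl(1-(|B|-1)\nu\bigr)\|x_B^*\|^2$ when $x_B^*$ has a large dynamic range, since then $\|x_B^*\|$ is of the same order as $\|x_B^*\|_{\ell^\infty}$. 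The paper sidesteps this with a different decomposition: it isolates the largest coordinate $i_A\in B$ (so $|x_{i_A}^*|=\|x_B^*\|_{\ell^\infty}$), writes the residual as $\psi_{i_A}x_{i_A}^*+\Psi_{B\setminus\{i_A\}}x^*_{B\setminus\{i_A\}}+\eta-\Psi_A\bar x_A$, and bounds $J_\lambda(x)\ge\tfrac12|x_{i_A}^*|^2-|x_{i_A}^*|(\cdots)$, where every term in $(\cdots)$ is itself $O(\|x_B^*\|_{\ell^\infty})=O(|x_{i_A}^*|)$ with coefficients summing to at most $(T-1)\nu+\beta$; then $\lambda|A|$ is simply dropped, and the comparison $\lambda T<\bigl[\tfrac12-(T-1)\nu-\beta-\tfrac12\beta^2\bigr]\min_{i\in A^*}|x_i^*|^2$ yields exactly the stated $\xi$ with its $2T$ denominator. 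Your route could likely be repaired by Cauchy--Schwarz ($\|x_B^*\|_{\ell^1}\le\sqrt{|B|}\,\|x_B^*\|$), but the threshold it produces has a different form and, for small $T$, crude constants need not dominate the stated $\xi$; this is precisely the bookkeeping obstacle you flagged, and the coordinate-isolation device is the missing idea that resolves it.
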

\begin{proof}
Let $x$ be a global minimizer to problem \eqref{equ:l0reg}, and its support be $A$.
It suffices to show $A=A^*$.
If $|A| \geq T+1$, then by the choice of $\lambda$, we deduce
\begin{equation*}
J_\lambda(x) \geq \lambda(T+1) > \lambda T + \tfrac{1}{2}\epsilon^2 \geq J_\lambda (x^o),
\end{equation*}
which contradicts the minimizing property of $x$. Hence, $|A| \leq T$. Since a global minimizer
is always a coordinatewise minimizer, by Lemma \ref{lem:cwm}, we deduce $A\subseteq A^*$.
If $A\neq A^*$, then $B = A^*\backslash A$ is nonempty. By the global minimizing property
of $x$, there holds $x = \Psi_A^\dag y$.
Using the notation $\bar{x}_A$ from Lemma \ref{lem:est-onestep}, we have
\begin{equation}\label{eqn:Jfcn}
   J_\lambda(x) = \tfrac{1}{2}\|\Psi_B x^*_B + \eta - \Psi_A \bar{x}_A\|^2 + \lambda |A|.
\end{equation}
Now we consider the cases of the MIP and RIP separately.\\
Case (a): Let  $i_{A} \in \{i\in A^c: {|x_i^*| }= \|x^*_B\|_{\ell^\infty}\}$, then $i_A\in B$ and
$|x_{i_A}^*| = \|x^*_B\|_{\ell^\infty}$. Hence, by Lemma \ref{lem:est-onestep}, there holds
\begin{equation*}
  \begin{aligned}
      \tfrac{1}{2}\|\psi_{i_A}x^*_{i_A} &+ \Psi_{B\backslash \{i_A\}} x^*_{B\backslash \{i_A\}} + \eta - \Psi_A \bar{x}_A\|^2 \\
      \geq& \tfrac{1}{2}|x_{i_A}^*|^2 - |x_{i_A}^*|\left(|\langle \psi_{i_A},\Psi_{B\backslash \{i_A\}} x^*_{B\backslash \{i_A\}}\rangle| + |\langle \psi_{i_A}, \eta \rangle| + |\langle \psi_{i_A}, \Psi_A \bar{x}_A\rangle|\right)\\
      \geq& \tfrac{1}{2}|x_{i_A}^*|^2 - |x_{i_A}^*|\left((|B|-1)\nu |x_{i_A}^*| + \epsilon + \frac{|A|\nu }{1 - (|A| -1)\nu}(|B|\nu |x_{i_A}^*| + \epsilon) \right).
  \end{aligned}
\end{equation*}
Now with $\epsilon <\beta \min_{i\in A^*} |x_i^*|\leq \beta |x_{i_A}^*|$ from Assumption \eqref{ass:epsilon}, we deduce
\begin{equation*}
  \begin{aligned}
      % \tfrac{1}{2}\|\Psi_{i_A}x^*_{i_A} &+ \Psi_{B\backslash \{i_A\}} x^*_{B\backslash \{i_A\}} + \eta - \Psi_A \bar{x}_A\|^2 \\
     J_\lambda(x) \geq& |x_{i_A}^*|^2\left(\frac{1}{2} - \left((|B|-1)\nu + \beta + \frac{|A|\nu }{1 - (|A| -1)\nu}(|B|\nu + \beta) \right)\right) + \lambda|A| \\
      =  & |x_{i_A}^*|^2\left(\frac{1}{2} - (T-1)\nu  -\beta\right) + |x_{i_A}^*|^2 |A|\nu \left(1 - \frac{|B|\nu + \beta}{1- (|A|-1)\nu}\right) +\lambda|A| \\
        \geq & |x_{i_A}^*|^2\left(\frac{1}{2} - (T-1)\nu  -\beta\right),
       % = & |x_{i_A}^*|^2\frac{1}{2(1-(|A|-1)\nu)}\left[1-(|A|+2|B|)\nu +3\nu -2(|A|+|B|)\nu^2 + 2\nu^2 -2\beta(1+\nu)\right]\\
     %\geq& |x_{i_A}^*|^2\frac{(1 - 2T\nu + 3\nu - 2T\nu^2 + 2\nu^2) - \beta(2+2\nu) }{2(1+ \nu - |A| \nu)}.
  \end{aligned}
\end{equation*}
where the last inequality follows from $(|A| + |B|-1)\nu + \beta < 1 $.
%{\color{red}
%\begin{equation*}
%  \begin{aligned}
%    J_\lambda(x) &= \frac{1}{2}\|\Psi_{i_A}x^*_{i_A} + \Psi_{B\backslash \{i_A\}} x^*_{B\backslash \{i_A\}} + \eta - \Psi_A \bar{x}_A\|^2 + \lambda |A| \\
%       &\geq \frac{1}{2}|x_{i_A}^*|^2 - |x_{i_A}^*|\left(|\langle \Psi_{i_A},\Psi_{B\backslash \{i_A\}} x^*_{B\backslash \{i_A\}}\rangle| + |\langle \Psi_{i_A}, \eta \rangle| + |\langle \Psi_{i_A}, \Psi_A \bar{x}_A\rangle|\right) + \lambda |A| \\
%     & \geq \frac{1}{2}|x_{i_A}^*|^2 - |x_{i_A}^*|\left((|B|-1)\nu |x_{i_A}^*| + \epsilon + \frac{|A|\nu }{1 - (|A| -1)\nu}(|B|\nu |x_{i_A}^*| + \epsilon) \right) + \lambda |A| \\
%     &\geq |x_{i_A}^*|^2\frac{(1 - 2T\nu + 3\nu - 2T\nu^2 + 2\nu^2 + |A|\nu) - \beta(2+2\nu) }{2(1+ \nu - |A| \nu)} + \lambda |A| \\
%     & \geq |x_{i_A}^*|^2\left(\frac{1}{2} - (T-1)\nu  -\beta\right).
%  \end{aligned}
%\end{equation*}
%}
By Assumption \ref{ass:epsilon}, there holds $\epsilon^2/2 \leq \beta^2/2
\min_{i\in A^*}|x_i^*|^2$. Now by the assumption $\beta \leq \left(1- 2(T-1)\nu\right)/(T+3)$,
we deduce $(T+1)\beta^2 + 2\beta<(T+3)\beta \leq 1-2(T-1)\nu$, and hence $T\beta^2 <
1-2(T-1)\nu-2\beta-\beta^2$. Together with the definition of $\xi$, this implies
$\xi>\epsilon^2/2$. Further, by the choice of the parameter $\lambda$, i.e.,
$\lambda\in (\epsilon^2/2,\xi)$, there holds
\begin{equation*}
  J_\lambda(x) - J_\lambda(x^o) \geq \left[\frac{1}{2} - (T-1)\nu - \beta -
 \frac{1}{2}\beta^2\right]\min_{i\in A^*}|x_i^*|^2 - \lambda T >0,
\end{equation*}
which contradicts the optimality of $x$.

\noindent Case (b): It follows from \eqref{eqn:Jfcn} that
\begin{equation*}
  \begin{aligned}
  J_\lambda(x)& \geq \tfrac{1}{2}\|\Psi_B x_B^*\|^2 - |\langle \eta, \Psi_B x_B^*\rangle| - |\langle x_B^*,\Psi_B^t\Psi_A \bar{x}_A\rangle|  + \lambda |A|\\
    &\geq \|\Psi_B x_B^*\|(\tfrac{1}{2}\|\Psi_B x_B^*\| - \epsilon) - \|x_B^*\|\delta\|\bar{x}_A\| + \lambda |A|.
  \end{aligned}
\end{equation*}
By Assumption \ref{ass:epsilon} and the assumptions on $\beta$ and $\delta$, we deduce
$\sqrt{1-\delta}\|x_B^*\| \geq \epsilon$. Now in view of the monotonicity of the function
$t(t/2 - \epsilon)$ for $t\geq \epsilon$, and the inequality $\|\Psi_B x_B^*\|
\geq \sqrt{1-\delta}\|x_B^*\|$ from the definition of the RIP constant $\delta$, we have
\begin{equation*}
\|\Psi_B x_B^*\|(\tfrac{1}{2}\|\Psi_B x_B^*\| - \epsilon) \geq \sqrt{1-\delta}\|x_B^*\|(\tfrac{1}{2}\sqrt{1-\delta}\|x_B^*\| - \epsilon).
\end{equation*}
Thus by Lemma \ref{lem:est-onestep}, we deduce
\begin{equation*}
  \begin{aligned}
    J_\lambda(x)& \geq \frac{1-\delta}{2}\|x_B^*\|^2 - \epsilon\sqrt{1-\delta}\|x_B^*\| - \|x_B^*\|\left(\frac{\delta^2}{1-\delta}\|x_B^*\| + \frac{\delta}{\sqrt{1-\delta}}\epsilon\right) + \lambda |A|\\
    & = \frac{1-\delta}{2}\|x_B^*\|^2 - \frac{1}{\sqrt{1-\delta}}\epsilon\|x_B^*\| - \|x_B^*\|^2\frac{\delta^2}{1-\delta} + \lambda |A|\\
    &\geq \|x_B^*\|^2 \left[\frac{1-\delta}{2} - \frac{\delta^2}{1-\delta} - \frac{\beta}{\sqrt{1-\delta}}\right] + \lambda|A|,
    %{\color{red} \mbox{the second term is missing??}}
  \end{aligned}
\end{equation*}
where the last line follows from $\epsilon < \beta\|x_B^*\|$, in view of Assumption \ref{ass:epsilon}.
Appealing again to Assumption \ref{ass:epsilon}, $\epsilon^2/2\leq \beta^2\min_{i\in A^*}|x_i^*|^2/2
\leq \beta^2\|x^*_B\|^2/2$. Next it follows from the assumption $ \beta\leq (1-\delta-\delta^2)/4$ that the inequality
\begin{equation*}
  \begin{aligned}
    \beta^2 + \frac{\beta}{\sqrt{1-\delta}} & \leq \frac{\beta^2 +\beta}{\sqrt{1-\delta}} \leq \frac{2\beta}{1-\delta}\\
      & \leq \frac{1-2\delta-\delta^2}{2(1-\delta)} = \frac{1-\delta}{2} - \frac{\delta^2}{1-\delta}
  \end{aligned}
\end{equation*}
holds. This together with the definition of $\xi$ yields $\xi>\epsilon^2/2$. Further, the choice of $\lambda \in
(\epsilon^2/2,\xi)$ implies
\begin{equation*}
J_\lambda(x) - J_\lambda(x^o) \geq \|x_B^*\|^2 \left[\frac{1-\delta}{2} - \frac{\delta^2}{1-\delta} - \frac{\beta}{\sqrt{1-\delta}} - \frac{1}{2}\beta^2\right] - \lambda |B| > 0,
\end{equation*}
which again leads to a contradiction. This completes the proof of the theorem.
\end{proof}

\begin{proposition}\label{pro:eqv}
Let the conditions in Theorem \ref{thm:global} hold. Then the oracle solution $x^o$ is a minimizer of
\eqref{l0noise}. Moreover, the support to any solution of problem \eqref{l0noise} is $A^*$.
\end{proposition}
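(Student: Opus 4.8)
The plan is to build on Theorem \ref{thm:global}, which under the stated hypotheses identifies the oracle solution $x^o = \Psi_{A^*}^\dag y$ as the unique global minimizer of $J_\lambda$ with support exactly $A^*$, so that $\|x^o\|_0 = T$. I would prove the proposition in three steps: that $x^o$ is feasible for \eqref{l0noise}, that it attains the smallest $\ell^0$-value among feasible points, and finally that every solution of \eqref{l0noise} is supported on $A^*$. For feasibility, I would observe that $\Psi x^o = \Psi_{A^*}\Psi_{A^*}^\dag y = Py$, where $P$ is the orthogonal projection onto $\mathrm{range}(\Psi_{A^*})$; since $y = \Psi_{A^*}x^*_{A^*}+\eta$ with $\Psi_{A^*}x^*_{A^*}\in\mathrm{range}(\Psi_{A^*})$, the residual is $\Psi x^o - y = -(I-P)\eta$, whence $\|\Psi x^o - y\| = \|(I-P)\eta\|\leq\|\eta\|\leq\epsilon$. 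Thus $x^o$ is feasible with $\|x^o\|_0 = T$.

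For $\ell^0$-optimality I would argue by contradiction. Suppose some feasible $z$, i.e.\ $\|\Psi z - y\|\leq\epsilon$, satisfies $\|z\|_0\leq T-1$. Invoking the global minimality of $x^o$ for $J_\lambda$, together with $\|\Psi x^o - y\|^2\geq0$ and $\|\Psi z - y\|^2\leq\epsilon^2$, gives
\[
  \lambda\bigl(\|x^o\|_0 - \|z\|_0\bigr)\leq \tfrac12\|\Psi z - y\|^2 - \tfrac12\|\Psi x^o - y\|^2 \leq \tfrac12\epsilon^2 .
\]
Since $\|x^o\|_0 - \|z\|_0\geq1$, this forces $\lambda\leq\epsilon^2/2$, contradicting the choice $\lambda>\epsilon^2/2$. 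Hence no feasible point has $\ell^0$-value below $T$, so $x^o$ is a minimizer of \eqref{l0noise}.

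For the support claim, let $\tilde x$ solve \eqref{l0noise} with support $\tilde A$; by the previous step $|\tilde A| = T$. I would first reduce to the least-squares representative: setting $\hat x_{\tilde A} = \Psi_{\tilde A}^\dag y$ and $\hat x_{\tilde A^c}=0$ produces a feasible point with $\|\hat x\|_0\leq T$, hence $\|\hat x\|_0 = T$ and $\mathrm{supp}(\hat x)=\tilde A$. Assuming $\tilde A\neq A^*$ and writing $B = A^*\setminus\tilde A\neq\emptyset$, the residual equals $\|\Psi\hat x - y\|^2 = \|\Psi_B x_B^* + \eta - \Psi_{\tilde A}\bar{x}_{\tilde A}\|^2$ with $\bar{x}_{\tilde A}=\hat x_{\tilde A}-x^*_{\tilde A}$, which is precisely the quantity estimated in the proof of Theorem \ref{thm:global}, cf.\ \eqref{eqn:Jfcn} with the term $\lambda|A|$ omitted. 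Reusing those computations with $|\tilde A| = T$ yields, in the MIP case, $\|\Psi\hat x - y\|^2\geq|x_{i_A}^*|^2\bigl(1-2(T-1)\nu-2\beta\bigr)$ and, in the RIP case, the analogous bound $\|\Psi\hat x - y\|^2\geq\|x_B^*\|^2\bigl[(1-\delta)-2\delta^2/(1-\delta)-2\beta/\sqrt{1-\delta}\bigr]$. Because the brackets defining $\xi$ are strictly positive (a consequence of $\xi>\epsilon^2/2\geq0$), each lower bound strictly exceeds $\beta^2|x_{i_A}^*|^2\geq\epsilon^2$, respectively $\beta^2\|x_B^*\|^2\geq\epsilon^2$, via Assumption \ref{ass:epsilon}. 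This contradicts feasibility $\|\Psi\hat x - y\|\leq\epsilon$, so $\tilde A = A^*$.

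I expect the main obstacle to be the reduction to the least-squares representative $\hat x$ and the bookkeeping needed to recycle the residual estimates of Theorem \ref{thm:global}: Lemma \ref{lem:est-onestep} applies only when the primal variable is $\Psi_{\tilde A}^\dag y$, so one must justify passing from a general feasible solution to $\hat x$, and then verify that after dropping the $\lambda|A|$ contribution the remaining lower bound on the residual still strictly beats $\epsilon^2$ under the parameter constraints on $\nu$, $\delta$, and $\beta$.
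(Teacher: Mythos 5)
Your overall architecture is essentially the paper's: show the minimum of \eqref{l0noise} is $T$, replace a putative minimizer by its least-squares representative $\hat x=\Psi_{\tilde A}^\dag y$ on its support, and rule out $\tilde A\neq A^*$ by forcing the residual above $\epsilon$ via the estimates of Theorem \ref{thm:global}. Your feasibility argument for $x^o$ and your argument that no feasible point has $\ell^0$-norm below $T$ (using $\lambda>\epsilon^2/2$ and the unique global minimality of $x^o$) are correct; the latter is actually a cleaner disposal of the case $|\tilde A|<T$ than the paper's.

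The gap is in the MIP branch of your last step. Once $|\tilde A|=T$ and $\tilde A\neq A^*$, necessarily $\tilde A\not\subseteq A^*$, whereas every Case (a) computation in Theorem \ref{thm:global} is performed for a support $A\subseteq A^*$; that inclusion is supplied there by Lemma \ref{lem:cwm}, which applies to a global (hence coordinatewise) minimizer of $J_\lambda$ and is not available for the support of a minimizer of \eqref{l0noise}. Concretely, the MIP estimates give
\begin{equation*}
\tfrac12\|\Psi\hat x - y\|^2 \;\geq\; |x^*_{i_A}|^2\left[\tfrac12-(|B|-1)\nu-\beta-\frac{|\tilde A|\nu\,(|B|\nu+\beta)}{1-(|\tilde A|-1)\nu}\right],
\end{equation*}
and the paper's passage to the constant $\tfrac12-(T-1)\nu-\beta$ uses $|\tilde A|+|B|=T$, i.e.\ exactly $\tilde A\subseteq A^*$. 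With $|\tilde A|=T$ and $|B|$ possibly as large as $T$ (disjoint supports), the cross term $\frac{T\nu(|B|\nu+\beta)}{1-(T-1)\nu}$ is not absorbed, so your claimed bound $\|\Psi\hat x-y\|^2\geq|x^*_{i_A}|^2\bigl(1-2(T-1)\nu-2\beta\bigr)$ does not follow from ``reusing those computations.'' The conclusion is rescuable: one can verify that $\tfrac12-(T-1)\nu-\beta-\frac{T\nu(T\nu+\beta)}{1-(T-1)\nu}>\tfrac12\beta^2$ throughout the admissible range (the left side is decreasing in $\nu$, equals $\frac{1-2\beta}{2(3T-1)}$ at $\nu=(1-2\beta)/(3T-1)$, and this dominates $\tfrac12\beta^2$ because $\beta\leq 1/(T+3)$), but that verification is precisely the missing content. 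Your RIP branch, in contrast, is sound as stated, since the Case (b) computation only uses $|\tilde A|+|B|\leq 2T$ and monotonicity of $\delta_s$, never $\tilde A\subseteq A^*$. To be fair, the paper's own proof hides the same MIP issue behind ``repeating the arguments''; the difference is that your proposal commits to a specific constant that the argument does not deliver.
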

\begin{proof}
First we observe that there exists a solution $\bar{x}$ to problem \eqref{l0noise} with $|\mbox{supp}
(\bar{x})|\leq  T$ by noticing that the true solution $x^*$ satisfies $\|\Psi x^* - y\|\leq \epsilon$ and $\|x^*\|_0\leq T$. Clearly, for any
minimizer $\bar{x}$ to problem \eqref{l0noise} with support $|A|\leq T$, then $\Psi^\dag_A y$ is also
a minimizer with $\|\Psi \Psi_A^\dag y - y\|\leq \|\Psi \bar{x} - y\|$. Now if there is a minimizer $\bar x$ with $A \neq A^*$, by
repeating the arguments in the proof of Theorem \ref{thm:global}, we deduce
\begin{equation*}
  \tfrac{1}{2}\|\Psi \Psi_A^\dag y - y \|^2 + \lambda \|\Psi_A^\dag y\|_0 = J_\lambda(\Psi_A^\dag y) > J_\lambda(x^o)=\tfrac{1}{2}\epsilon^2 + \lambda T \Rightarrow \|\Psi \bar{x} - y\| > \epsilon,
\end{equation*}
which leads a contradiction to the assumption that $\bar x$ is a minimizer to problem \eqref{l0noise}. Hence, any minimizer of \eqref{l0noise} has a
support $A^*$, and thus the oracle solution $x^o$ is a minimizer.
\end{proof}

\begin{remark}\label{rem:eqv}
Due to the nonconvex structure of problem \eqref{l0noise}, the equivalence between
problem \eqref{l0noise} and its ``Lagrange'' version \eqref{equ:l0reg} is generally
not clear. However under certain assumptions, their equivalence can be obtained,
cf. Theorem \ref{thm:global} and Proposition \ref{pro:eqv}. Further, we note that very recently, the
equivalence between \eqref{equ:l0reg} and the following constrained sparsity problem
\begin{equation*}
  \min \|\Psi x - y\| \quad \mbox{subject to }\quad \|x\|_0 \leq T
\end{equation*}
was discussed in \cite{Nikolova:2014}.
\end{remark}

\section{Primal-dual active set method with continuation}\label{sec:alg}
In this section, we present the primal-dual active set with continuation (PDASC) algorithm, and
establish its finite step convergence property.
\subsection{The PDASC algorithm}
The PDASC algorithm combines the strengthes of the PDAS algorithm \cite{JiaoJinLu:2013} and the
continuation technique. The complete procedure is described in Algorithm \ref{alg:pdasc}. The
PDAS algorithm (the inner loop) first determines the active set from the primal and dual
variables, then update the primal variable by solving a least-squares problem on the active
set, and finally update the dual variable explicitly. It is well known that for convex optimization problems
the PDAS algorithm can be interpreted as the semismooth Newton method \cite{ItoKunisch:2008}.
Thus the algorithm merits a local superlinear
convergence, and it reaches convergence with a good initial guess. In contrast, the continuation
technique on the regularization parameter $\lambda$ allows one to control the size of the active
set $A$, and thus the active set of the coordinatewise minimizer lies within the true active
set $A^*$. For example, for the choice of the parameter $\lambda_0\geq \|\Psi^t y\|^2_{\ell^\infty}/2$,
$x(\lambda_0)=0$ is the unique global minimizer to the function $J_{\lambda_0}$, and the active set $A$ is empty.

\begin{algorithm}[hbt!]
   \caption{Primal dual active set with continuation (PDASC) algorithm}\label{alg:pdasc}
   \begin{algorithmic}[1]
     \STATE Set $\lambda_0 \geq\frac{1}{2} \|\Psi^t y\|^2_{\ell^\infty}$, ${A}(\lambda_0) = \emptyset$,
        $x(\lambda_0) =0$ and $d(\lambda_0)= \Psi^t y$, $\rho \in (0,1)$, $J_{max}\in \mathbb{N}$.
     \FOR {$k=1,2,...$}
     \STATE Let $\lambda_k = \rho\lambda_{k-1}$, ${A}_0 = {A}(\lambda_{k-1})$, $(x^0,d^0) = (x(\lambda_{k-1}), d(\lambda_{k-1}))$.
     \FOR {$j=1,2,..., J_{max}$}
     \STATE Compute the active and inactive sets ${A}_j$ and ${I}_j$:
         \begin{equation*}
            {A}_j = \left\{i: |x^{j-1}_i+ d^{j-1}_i| > \sqrt{2\lambda_k}\right\}\quad\mbox{and}\quad {I}_j = {A}_j^c.
         \end{equation*}
     \STATE Check stopping criterion ${A}_j = {A}_{j-1}$. % If stopped, set ${A}(\lambda_k) = {A}_j$ and
         % \begin{equation*}
         %  (x(\lambda_{k}),d(\lambda_{k})) = (x^{j-1},d^{j-1}).
         % \end{equation*}
     \STATE Update the primal and dual variables $x^j$ and $d^j$ respectively by
       \begin{equation*}
        \left\{
          \begin{array}{l}
           x_{{I}_j}^{j} = 0, \\[1.2ex]
           \Psi_{{{A}}_j}^t \Psi_{{{A}}_j} x_{{{A}}_j}^{j} = \Psi_{{{A}}_j}^t y,  \\[1.2ex]
           d^{j} = \Psi^t(\Psi x^j - y).
          \end{array}\right.
       \end{equation*}
     \ENDFOR
     \STATE {blue}Set $\widetilde{j}=\min(J_{max},j)$, and ${A}(\lambda_k) = \left\{i: |x^{\widetilde{j}}_i+ d^{\widetilde{j}}_i| > \sqrt{2\lambda_k}\right\}$ and  $(x(\lambda_{k}),d(\lambda_{k})) = (x^{\widetilde{j}},d^{\widetilde{j}})$.
     \STATE Check stopping criterion: $\|\Psi x(\lambda_k) - y\| \leq \epsilon$.
     \ENDFOR
   \end{algorithmic}
\end{algorithm}

In the algorithm, there are a number of free parameters: the starting value $\lambda_0$ for
the parameter $\lambda$, the decreasing factor $\rho\in(0,1)$ (for $\lambda$), and the maximum number
$J_{max}$ of iterations for the inner PDAS loop. Further, one needs to set the stopping criteria
at lines 6 and 10. Below we discuss their choices.

The choice of initial value $\lambda_0$ is not important. For any choice $\lambda_0\geq \|\Psi^t y
\|^2_{\ell^\infty}/2$, $x=0$ is the unique global minimizer, and $A=\emptyset$. Both the decreasing
factor $\rho$ and the iteration number $J_{max}$ affect the accuracy and efficiency of the algorithm:
Larger $\rho$ and $J_{max}$ values make the algorithm have better exact support recovery probability
but take more computing time. Numerically, $\rho$ is determined by the number of grid points
for the parameter $\lambda$. Specifically, given an initial value $\lambda_0 \geq  \|\Psi^t y\|^2_{\ell^\infty}/2$
and a small constant $\lambda_{min}$, e.g., $\mbox{1e-15}\lambda_0$, the interval $[\lambda_{min},\lambda_0]$ is divided
into $N$ equally distributed subintervals in the logarithmic scale. A large $N$ implies a large decreasing
factor $\rho$. The choice $J_{max}=1$ generally works well, which is also covered in the convergence
theory in Theorems \ref{thm:mc} and \ref{thm:rip} below.

The stopping criterion for each $\lambda$-problem in Algorithm \ref{alg:pdasc} is
either $A_j = A_{j-1}$ or $j =  J_{max}$, instead of the standard criterion $A_j = A_{j-1}$ for
active set type algorithms. The condition $j = J_{max}$ is very important for nonconvex problems.
This is motivated by the following empirical observation: When the true signal $x^*$ does not have a
strong decay property, e.g., $0$-$1$ signal, the inner PDAS loop (for each $\lambda$-problem) may
never reach the condition ${A}_j = {A}_{j-1}$ within finite steps; see the example below.

\begin{exam}
In this example, we illustrate the convergence of the PDAS algorithm. Let $-1<\mu<0$, ${A}^* = \{1,2\}$, and
\begin{equation*}
   \Psi_1 = \frac{1}{\sqrt{1+\mu^2}}(1,\mu,0,...,0)^t,\ \ \Psi_2 = \frac{1}{\sqrt{1+\mu^2}}(\mu, 1, 0,...,0)^t, \ \ x^*_1 = x^*_2 = 1.
\end{equation*}
In the absence of data noise $\eta$, the data $y$ is given by
\begin{equation*}
  y = \frac{1}{\sqrt{1+\mu^2}}(1+\mu, 1+\mu,0,...,0)^t.
\end{equation*}
Now we let $\sqrt{2\lambda}\in (\frac{(1+\mu)^2}{1+\mu^2},\frac{(1-\mu^2)^2}{(1+\mu^2)^2})$, the initial
guess ${A}_1 = \{1\}$. Then direct computation yields
\begin{equation*}
  \begin{aligned}
    x^1 &= \frac{1}{1+\mu^2}((1+\mu)^2, 0)^t,\\
    y - \Psi x^1 & = \frac{1 - \mu^2}{(\sqrt{1+ \mu^2})^3}(-\mu, 1,0,...,0)^t,\\
    d^1 &= \frac{1}{(1+\mu^2)^2}(0, (1-\mu^2)^2)^t.
  \end{aligned}
\end{equation*}
Hence $d^1_2 > \sqrt{2\lambda}> x^1_1$, and ${A}_2 = \{2\}$. Similarly, we have
${A}_3 = \{1\} = {A}_1$, which implies that the algorithm simply alternates between the
two sets $\{1\}$ and $\{2\}$ and will never reach the stopping condition ${A}_k = {A}_{k+1}$.
\end{exam}

The stopping condition at line 10 of Algorithm \ref{alg:pdasc} is a discrete analogue of
the discrepancy principle. This rule is well established in the inverse problem community
for selecting an appropriate regularization parameter \cite{ItoJin:2014}.
The rationale behind the rule is that one cannot expect the reconstruction to be more accurate
than the data accuracy in terms of the discrepancy. In the PDASC algorithm, if the active set
is always contained in the true active set $A^*$ throughout the iteration, then the discrepancy principle can
always be satisfied for some $\lambda_k$, and the solution $x(\lambda_k)$ resembles closely the oracle solution $x^o$.

\subsection{Convergence analysis}

Now we discuss the convergence of Algorithm \ref{alg:pdasc}. We shall discuss
the cases of the MIP and RIP conditions separately. The general proof strategy is as follows.
It essentially relies on the control of the active set during the iteration and the certain
monotonicity relation of the active set $A(\lambda_k)$ (via the continuation technique). In particular, we
introduce two auxiliary sets $G_{\lambda,s_1}$ and $G_{\lambda,s_2}$, cf. \eqref{equ:sets} below, to
precisely characterize the evolution of the active set $A$ during the PDASC iteration.

First we consider the MIP case.
We begin with an elementary observation: under the assumption $\nu<(1-2\beta)/(2T-1)$
of the mutual coherence parameter $\nu$, there holds $(2T-1)\nu + 2\beta <1$.
\begin{lemma}\label{lem:mc-ss}
If $\nu < (1-2\beta)/(2T - 1)$, then for any $\rho \in ( ((2T-1)\nu+2\beta)^2,1)$ there exist $s_1,s_2\in (1/(1-T\nu
+ \nu -\beta),1/(T\nu + \beta))$, $s_1>s_2$, such that $s_2 = 1+ (T\nu - \nu +\beta)s_1$ and $\rho=s_2^2/s_1^2$.
\end{lemma}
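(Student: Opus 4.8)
The plan is to solve the two defining relations explicitly for $s_1$ and $s_2$ as functions of $\rho$, and then verify that the resulting values fall inside the prescribed open interval. To keep the algebra transparent I would first abbreviate $a = (T-1)\nu + \beta$ and $b = T\nu + \beta$, so that the slope appearing in $s_2 = 1 + (T\nu-\nu+\beta)s_1$ is exactly $a$, the target interval is $(\tfrac{1}{1-a},\tfrac{1}{b})$, and $a+b = (2T-1)\nu + 2\beta$. The opening observation is that the hypothesis $\nu < (1-2\beta)/(2T-1)$ is nothing but $a+b < 1$, and this single inequality does double duty: it yields $0 < b < 1-a$ (so the interval $(\tfrac{1}{1-a},\tfrac{1}{b})$ has positive endpoints and is nonempty) and also $(a+b)^2 < 1$ (so the admissible range $(\,(a+b)^2,1)$ of $\rho$ is nonempty). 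Throughout I assume the nondegenerate regime $a > 0$ and $b > 0$ (guaranteed as soon as $\beta > 0$, or $\nu > 0$ with $T > 1$), which is what makes strict interior membership possible.

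Next I would introduce $r = \sqrt{\rho}$, which by assumption lies in $(a+b,1)$, and use it to decouple the system. Since $s_1 > 0$ and $s_2 = 1 + a s_1 > 0$, the relation $\rho = s_2^2/s_1^2$ is equivalent to $s_2 = r s_1$; eliminating $s_2$ between $s_2 = r s_1$ and $s_2 = 1 + a s_1$ forces $s_1(r-a) = 1$. As $r > a+b > a$, this is solvable with $s_1 = 1/(r-a) > 0$, and I then put $s_2 = r s_1 = r/(r-a)$. By construction both defining relations hold, and $s_1 > s_2$ follows immediately from $r < 1$.

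The only substantive step left is the membership $s_1,s_2 \in (\tfrac{1}{1-a},\tfrac{1}{b})$, which I would read off from monotonicity. The map $r \mapsto s_1 = 1/(r-a)$ is a strictly decreasing bijection of $(a+b,1)$ onto $(\tfrac{1}{1-a},\tfrac{1}{b})$, because it sends the endpoints $r=a+b$ and $r=1$ to $\tfrac{1}{b}$ and $\tfrac{1}{1-a}$ respectively; hence $s_1$ lands in the required interval with no further work. Writing $s_2 = 1 + a/(r-a)$, the same monotonicity shows $s_2$ decreases from $(a+b)/b$ at $r=a+b$ to $\tfrac{1}{1-a}$ at $r=1$, so that $\tfrac{1}{1-a} < s_2 < (a+b)/b$; the upper bound then gives $s_2 < \tfrac{1}{b}$ precisely because $a+b<1$, which closes the argument.

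The main obstacle is bookkeeping rather than genuine difficulty: one must track the signs of the denominators $r-a$, $b$, and $1-a$ (all positive thanks to $a+b<1$ and $r>a+b$) and orient each inequality correctly. The full force of the MIP hypothesis enters at exactly one point, namely the final estimate $s_2 < (a+b)/b < \tfrac{1}{b}$, which would fail without $a+b<1$; every other step is a consequence of the elementary monotonicity of $r \mapsto 1/(r-a)$, and the positivity $a>0$ is what upgrades the endpoint values to strict interior membership of $s_2$.
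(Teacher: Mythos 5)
Your proposal is correct and takes essentially the same route as the paper: your explicit formula $s_1 = 1/(\sqrt{\rho}-a)$ with $a=(T-1)\nu+\beta$ is precisely the inverse of the ratio map $s_1 \mapsto s_2/s_1 = 1/s_1 + a$ that the paper analyzes by monotonicity and its endpoint values $(2T-1)\nu+2\beta$ and $1$, so the two arguments differ only in that you construct the solution explicitly where the paper invokes the intermediate value theorem. Your closing remark that strict interior membership of $s_2$ needs $a>0$ is a fair observation, but it applies equally to the paper's own proof and is a degenerate boundary case ($\nu=\beta=0$) rather than a gap.
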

\begin{proof}
By the assumption $v<(1-2\beta)/(2T-1)$, $T\nu + \beta < 1 - T\nu + \nu -\beta$. Hence for any $s_1\in (1/(1-T\nu + \nu -\beta),1/(T\nu + \beta))$,
there holds
\begin{equation*}
  s_1>1+(T\nu -\nu +\beta)s_1 \quad \mbox{and}\quad 1+(T\nu-\nu+\beta)s_1>\frac{1}{1-T\nu+\nu-\beta},
\end{equation*}
i.e.,
\begin{equation*}
\frac{1}{T\nu + \beta} >s_1 > 1+ (T\nu - \nu +\beta)s_1 > \frac{1}{1-T\nu + \nu -\beta}.
\end{equation*}
Upon letting $s_2 = 1+ (T\nu - \nu +\beta)s_1$, we deduce
\begin{equation*}
\frac{1}{T\nu + \beta} >s_1 > s_2 > \frac{1}{1-T\nu + \nu -\beta}.
\end{equation*}
Now the monotonicity of the function $f(s_1)=s_2/s_1$ over the interval $(1/(1-T\nu
+\nu -\beta),1/(T\nu + \beta))$, and the identities
\begin{equation*}
  \begin{aligned}
    &\frac{1 + (T\nu - \nu +\beta)/(T\nu + \beta)}{1/(T\nu + \beta)} = (2T-1)\nu + 2\beta,\\
    &\frac{1 + (T\nu - \nu +\beta)/(1-T\nu + \nu -\beta)}{1/(1-T\nu + \nu -\beta)} = 1,\\
  \end{aligned}
\end{equation*}
imply that there exists an $s_1$ in the internal such that $s_2/s_1 = \sqrt{\rho}$ for any $\rho \in ( ((2T-1)\nu+2\beta)^2,1)$.
\end{proof}

Next for any $\lambda>0$ and $s>0$, we denote by
\begin{equation}\label{equ:sets}
  G_{\lambda,s} \triangleq \left\{i: |x_i^*| \geq \sqrt{2\lambda}s\right\}.
\end{equation}
The set $G_{\lambda,s}$ characterizes the true sparse signal $x^*$ (via level sets). The lemma
below provides an important monotonicity relation on the active set $A_k$ during the iteration,
which is essential for showing the finite step convergence of the algorithm in Theorem \ref{thm:global} below.
\begin{lemma}\label{lem:mcactiveset}
Let Assumption \ref{ass:epsilon} hold, $\nu < (1-2\beta)/({2T - 1})$, $\rho \in ( ((2T-1)\nu+2\beta)^2,1)$, and
$s_1$ and $s_2$ be defined in Lemma \ref{lem:mc-ss}. If $G_{\lambda,s_1}\subseteq A_k\subseteq A^*$,
then $G_{\lambda,s_2}\subseteq A_{k+1}\subseteq A^*$.
\end{lemma}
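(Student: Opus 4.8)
The plan is to analyze a single primal--dual active set step started from $A_k$: set $x_{A_k}=\Psi_{A_k}^\dag y$, $x_I=0$ with $I=S\setminus A_k$, $d=\Psi^t(y-\Psi x)$, and read off $A_{k+1}=\{i:|x_i+d_i|>\sqrt{2\lambda}\}$. Throughout I write $A=A_k$, $B=A^*\setminus A$, $a=|A|$, $b=|B|$, and use the crucial identity $a+b=T$ coming from $A\subseteq A^*$. The hypothesis $G_{\lambda,s_1}\subseteq A$ means $B\cap G_{\lambda,s_1}=\emptyset$, hence $m:=\|x_B^*\|_{\ell^\infty}<\sqrt{2\lambda}\,s_1$, while Assumption \ref{ass:epsilon} gives $\epsilon\le\beta m$. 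I would dispose of the degenerate case $B=\emptyset$ (i.e. $A=A^*$) separately and assume $B\neq\emptyset$, so $m>0$. All estimates then come from Lemma \ref{lem:est-onestep}(a), whose hypotheses hold since $\nu<(1-2\beta)/(2T-1)<1/(T-1)$ and $|A|\le T$; in particular $d_A=0$, so $|x_i+d_i|=|x_i|$ for $i\in A$ and $|x_i+d_i|=|d_i|$ for $i\in I$.

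The first technical step is to clean up the bound for $\bar x_A$ by removing the $a$-dependent denominator. Using $a+b=T$, I would check the elementary inequality $\frac{b\nu+\beta}{1-(a-1)\nu}\le(T-1)\nu+\beta$, which (after clearing denominators) rearranges to $(a-1)\nu\,(1-(T-1)\nu-\beta)\ge0$; this holds because $(T-1)\nu+\beta<(1-2\beta)/2+\beta=1/2<1$. Hence $\|\bar x_A\|_{\ell^\infty}\le m\,((T-1)\nu+\beta)$ for $a\ge1$, and the term $|A|\nu\|\bar x_A\|_{\ell^\infty}$ simply vanishes when $a=0$. This single estimate, combined with the defining relation $s_2-1=((T-1)\nu+\beta)s_1$ from Lemma \ref{lem:mc-ss}, is the engine behind all the inclusions below.

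Next I would prove the two inclusions. For $A_{k+1}\subseteq A^*$: any $j\in I^*=S\setminus A^*$ lies in $I$, so $|x_j+d_j|=|d_j|$, and the third estimate of Lemma \ref{lem:est-onestep}(a) with the clean bound gives $|d_j|\le m\,[b\nu+\beta+a\nu((T-1)\nu+\beta)]\le m(T\nu+\beta)<\sqrt{2\lambda}\,s_1(T\nu+\beta)<\sqrt{2\lambda}$, where the second step is again $a\nu(1-(T-1)\nu-\beta)\ge0$ and the last uses $s_1<1/(T\nu+\beta)$. For $G_{\lambda,s_2}\subseteq A_{k+1}$: fix $i$ with $|x_i^*|\ge\sqrt{2\lambda}\,s_2$; since $s_2>1$ this forces $i\in A^*$, so either $i\in A$ or $i\in B$. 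If $i\in A$, then $|x_i+d_i|=|x_i|\ge|x_i^*|-\|\bar x_A\|_{\ell^\infty}>\sqrt{2\lambda}\,s_2-\sqrt{2\lambda}\,s_1((T-1)\nu+\beta)=\sqrt{2\lambda}$. If $i\in B$, I use the lower bound for $|d_i|$ on $B$; inserting the clean estimate it suffices to verify $s_2-1\ge s_1[(b-1)\nu+\beta+a\nu((T-1)\nu+\beta)]$, which reduces via $T-b=a$ to $a\nu(1-(T-1)\nu-\beta)\ge0$. In both cases $i\in A_{k+1}$.

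I expect the main obstacle to be bookkeeping rather than conceptual: one must carry the three parameters $a,b,\beta$ through Lemma \ref{lem:est-onestep}(a) and repeatedly collapse each inequality to the one positivity statement $(T-1)\nu+\beta<1$. The one place needing genuine care is the clean bound on $\|\bar x_A\|_{\ell^\infty}$, whose direction flips with the sign of $a-1$, so the case $a=0$ (empty active set, $\bar x_A$ the zero vector) must be treated by hand; and the strict inequality $m<\sqrt{2\lambda}\,s_1$ must be propagated carefully so that the closed thresholds in the definition of $A_{k+1}$ all fall on the correct side.
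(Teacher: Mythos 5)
Your proposal is correct in the case it actually treats and follows essentially the same route as the paper's proof: a single PDAS step estimated via Lemma \ref{lem:est-onestep}(a), the bound $\epsilon\le\beta\|x_B^*\|_{\ell^\infty}$ from Assumption \ref{ass:epsilon}, the elementary inequality $\frac{|B|\nu+\beta}{1-(|A|-1)\nu}\le(T-1)\nu+\beta$ to remove the $|A|$-dependent denominator, and the relations $s_1<1/(T\nu+\beta)$, $s_2=1+((T-1)\nu+\beta)s_1$ from Lemma \ref{lem:mc-ss} to place every quantity on the correct side of $\sqrt{2\lambda}$; your systematic reduction of each estimate to the single positivity statement $(|A|-1)\nu\,\bigl(1-(T-1)\nu-\beta\bigr)\ge0$ is a tidier organization of the paper's two ``trivial inequalities'', not a different argument.

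The one point that deserves correction is the degenerate case $B=\emptyset$, which you promise to ``dispose of separately'': it cannot be disposed of, because the lemma as literally stated fails there when $\epsilon>0$. If $A_k=A^*$, then for $j\in I^*$ the dual value $d_j=\psi_j^t\bigl(y-\Psi_{A^*}\Psi_{A^*}^\dag y\bigr)$ is a residual of the noise and need not vanish; it is only bounded by (a multiple of) $\epsilon$, so once $\sqrt{2\lambda}$ drops below $|d_j|$ a spurious index enters $A_{k+1}$ and the conclusion $A_{k+1}\subseteq A^*$ is violated, while the hypothesis $G_{\lambda,s_1}\subseteq A_k\subseteq A^*$ holds trivially. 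This is not a divergence from the paper---its proof makes the identical implicit restriction, since the step $\epsilon\le\beta\|x_B^*\|_{\ell^\infty}$ is vacuous for $B=\emptyset$ unless $\epsilon=0$---and in the proof of Theorem \ref{thm:mc} the case is effectively excluded because $A(\lambda_{k-1})=A^*$ would already trigger the discrepancy stopping criterion at line 10 of Algorithm \ref{alg:pdasc}. But in a self-contained write-up you should state the restriction $A_k\subsetneq A^*$ (or $\epsilon=0$) explicitly rather than suggest the remaining case is routine.
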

\begin{proof}
Let $A = A_k$, $B = A^*\backslash A$. By Lemma \ref{lem:est-onestep}, we have
\begin{equation*}
  \begin{aligned}
    |x_i| &\geq |x_i^*| - \|\bar{x}_A\|_{\ell^\infty} \geq |x_i^*| - \frac{|B|\nu \|x_B^*\|_{\ell^\infty} + \epsilon}{1 - (|A|-1)\nu}, \quad \forall i\in A,\\
    |d_j| &\leq |B|\nu \left(1 + \frac{|A|\nu }{1- (|A| -1)\nu}\right) \|x_B^*\|_{\ell^\infty} + \epsilon \left(1 + \frac{|A|\nu }{1- (|A| -1)\nu}\right), \quad \forall \,\, j\in I^*,\\
    |d_i|&\geq |x_i^*|+ \nu\|x_B^*\|_{\ell^\infty} - |B|\nu \left(1 + \frac{|A|\nu }{1- (|A| -1)\nu}\right) \|x_B^*\|_{\ell^\infty} - \epsilon \left(1 + \frac{|A|\nu }{1- (|A| -1)\nu}\right), \quad \forall \,\, i\in B.
  \end{aligned}
\end{equation*}
Using the fact $\epsilon\leq \beta\min_{i\in A^*}|x_i^*|\leq \beta\|x_B^*\|_{\ell^\infty}$ from Assumption \ref{ass:epsilon}
and the trivial inequality $\frac{|B|\nu + \beta}{1- T\nu +\nu + |B|\nu} \leq \frac{T\nu + \beta}{1+\nu}$,
we arrive at
\begin{equation*}
  \begin{aligned}
     |B|\nu \left(1 + \frac{|A|\nu }{1- (|A| -1)\nu}\right) &\|x_B^*\|_{\ell^\infty} + \epsilon \left(1 + \frac{|A|\nu }{1- (|A| -1)\nu}\right) \\
     \leq& \frac{|B|\nu + \beta}{1- T\nu +\nu + |B|\nu}(1+\nu)\|x_B^*\|_{\ell^\infty}
     %\leq&\frac{T\nu + \beta}{1+\nu}(1+\nu)\|x_B^*\|_{\ell^\infty} =
      \leq (T\nu +\beta)\|x_B^*\|_{\ell^\infty}.
  \end{aligned}
\end{equation*}
Consequently,
\begin{equation*}
  \begin{aligned}
    |d_j|&\leq (T\nu + \beta)\|x_B^*\|_{\ell^\infty}, \quad \forall j\in I^*, \\
    |d_i|&\geq |x_i^*| - (T\nu - \nu +\beta)\|x_B^*\|_{\ell^\infty}, \quad \forall i\in B.
  \end{aligned}
\end{equation*}
It follows from the assumption $G_{\lambda,s_1}\subseteq A =  A_k$ that $\|x^*_B\|_{\ell^\infty} < s_1\sqrt{2\lambda}$. Then for all $j\in I^*$, we
have
\begin{equation*}
  |d_j| < s_1(T\nu + \beta)\sqrt{2\lambda} < \sqrt{2\lambda},
\end{equation*}
i.e., $j\in I_{k+1}$. This shows $A_{k+1}\subseteq A^*$. For any $i\in I \cap G_{\lambda,s_2}$, we have
\begin{equation*}
  |d_i| > s_2\sqrt{2\lambda} - (T\nu - \nu +\beta)s_1 \sqrt{2\lambda} \geq \sqrt{2\lambda},
\end{equation*}
This implies $i\in A_{k+1}$ by \eqref{equ:con2}. It remains to show that for any $i\in A
\cap G_{\lambda,s_2}$, $i\in A_{k+1}$. Clearly, if $A = \emptyset$, the assertion holds. Otherwise
\begin{equation*}
  \begin{aligned}
   |x_i| & \geq |x_i^*| - \frac{|B|\nu +\beta}{1 - (|A|-1)\nu}\|x_B^*\|_{\ell^\infty} \\
     & > s_2\sqrt{2\lambda} - (T\nu - \nu +\beta)s_1 \sqrt{2\lambda} \geq \sqrt{2\lambda},
  \end{aligned}
\end{equation*}
where the last line follows from the elementary inequality
\begin{equation*}
\frac{|B|\nu  + \beta}{1 - (|A|-1)\nu} \leq T\nu - \nu +\beta.
\end{equation*}
This together with \eqref{equ:con2} also implies $i\in A_{k+1}$. This concludes the proof of the lemma.
\end{proof}

Now we can state the convergence result.
\begin{theorem}\label{thm:mc}
Let Assumption \ref{ass:epsilon} hold, and $\nu < (1-2\beta)/({2T - 1})$. Then for any $\rho \in (((2T-1)\nu + 2\beta)^2,1)$, Algorithm
\ref{alg:pdasc} converges in finite steps.
\end{theorem}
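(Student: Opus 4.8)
The plan is to combine the continuation structure of the algorithm with the monotonicity result of Lemma \ref{lem:mcactiveset} to show that, after finitely many outer iterations in $k$, the active set $A(\lambda_k)$ must eventually coincide with $A^*$, at which point the discrepancy stopping criterion at line 10 is triggered. The key conceptual point is that the sets $G_{\lambda_k,s_1}$ and $G_{\lambda_k,s_2}$ grow as $\lambda_k$ decreases (since $\sqrt{2\lambda_k}s$ shrinks), so that pushing $\lambda_k$ down forces more and more true components into the active set, while Lemma \ref{lem:mcactiveset} guarantees that the active set never escapes $A^*$ once it is trapped inside it. I would therefore proceed in three stages: establish the base case $A(\lambda_0)=\emptyset\subseteq A^*$, then propagate the inclusion $A(\lambda_k)\subseteq A^*$ across both the inner $j$-loop and the outer $k$-loop using the lemma, and finally argue termination.

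First I would verify the invariant hypothesis needed to apply Lemma \ref{lem:mcactiveset}. The lemma requires $G_{\lambda_k,s_1}\subseteq A_k\subseteq A^*$ to conclude $G_{\lambda_k,s_2}\subseteq A_{k+1}\subseteq A^*$; the subtlety is that $s_1>s_2$ and $\rho=s_2^2/s_1^2$ are chosen precisely (Lemma \ref{lem:mc-ss}) so that $G_{\lambda_{k-1},s_2}=\{i:|x_i^*|\ge \sqrt{2\lambda_{k-1}}\,s_2\}$ coincides with $G_{\lambda_k,s_1}=\{i:|x_i^*|\ge \sqrt{2\lambda_k}\,s_1\}$, because $\sqrt{2\lambda_k}\,s_1=\sqrt{2\rho\lambda_{k-1}}\,s_1=\sqrt{2\lambda_{k-1}}\,s_2$. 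This \emph{dovetailing} of the level sets across consecutive $\lambda$-values is the crux of the argument: it means the output guarantee of one outer step is exactly the input hypothesis of the next. I would state this identity explicitly, then apply Lemma \ref{lem:mcactiveset} inductively within the inner loop as well, noting that each inner PDAS update is itself a one-step iteration of the type covered by the lemma, so the inclusions are preserved through all $\widetilde{j}$ inner sweeps regardless of whether the inner stopping criterion $A_j=A_{j-1}$ is reached.

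Next I would carry out the induction on $k$. The base case is immediate: $A(\lambda_0)=\emptyset$ and $G_{\lambda_0,s_1}=\emptyset$ provided $\lambda_0$ is large enough that $\sqrt{2\lambda_0}\,s_1>\max_i|x_i^*|$, which holds for the prescribed $\lambda_0\ge\|\Psi^t y\|_{\ell^\infty}^2/2$. For the inductive step I assume $G_{\lambda_k,s_1}\subseteq A(\lambda_k)\subseteq A^*$, apply the dovetailing identity and Lemma \ref{lem:mcactiveset} to obtain $G_{\lambda_{k+1},s_1}=G_{\lambda_k,s_2}\subseteq A(\lambda_{k+1})\subseteq A^*$, which closes the induction. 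Thus the invariant $A(\lambda_k)\subseteq A^*$ holds for every $k$.

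Finally I would argue finite termination. Since $\min_{i\in A^*}|x_i^*|>0$ is a fixed positive number and $\lambda_k=\rho^k\lambda_0\to 0$, there is a finite $K$ for which $\sqrt{2\lambda_K}\,s_1<\min_{i\in A^*}|x_i^*|$, whence $G_{\lambda_K,s_1}=A^*$. Combined with the invariant this forces $A^*\subseteq G_{\lambda_K,s_1}\subseteq A(\lambda_K)\subseteq A^*$, i.e. $A(\lambda_K)=A^*$. At that stage the primal update is $x(\lambda_K)_{A^*}=\Psi_{A^*}^\dag y=x^o$, so the residual satisfies $\|\Psi x(\lambda_K)-y\|=\|\Psi_{A^*}x^o-y\|\le\epsilon$ by the oracle property and Assumption \ref{ass:epsilon}, triggering the line-10 stopping criterion. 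I expect the main obstacle to be the bookkeeping that keeps the inclusions intact through the inner loop for arbitrary $J_{max}$ (including the case where $A_j=A_{j-1}$ is never reached, as warned by the Example), since one must verify that \emph{every} inner PDAS sweep preserves $A_j\subseteq A^*$ and does not prematurely drop elements of $G_{\lambda_k,s_2}$; this is exactly where the monotonicity engineered in Lemma \ref{lem:mcactiveset} does the heavy lifting, so the proof reduces to invoking it at each sweep rather than reproving it.
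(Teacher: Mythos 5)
Your overall strategy is exactly the paper's: the same level sets $G_{\lambda,s}$, the same two lemmas (Lemma \ref{lem:mc-ss} to construct $s_1,s_2$ with $\rho=s_2^2/s_1^2$, Lemma \ref{lem:mcactiveset} for the one-step propagation), the same dovetailing identity $\sqrt{2\lambda_{k+1}}\,s_1=\sqrt{2\lambda_k}\,s_2$, and essentially the same termination argument. However, as written your induction does not close, because your invariant pairs the wrong multiplier with the output set. You assume $G_{\lambda_k,s_1}\subseteq A(\lambda_k)\subseteq A^*$ and then want to apply Lemma \ref{lem:mcactiveset} to the $\lambda_{k+1}$-problem, whose initial active set is $A(\lambda_k)$; the hypothesis that the lemma requires is $G_{\lambda_{k+1},s_1}\subseteq A(\lambda_k)$, which by dovetailing is $G_{\lambda_k,s_2}\subseteq A(\lambda_k)$. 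Since $s_2<s_1$, one only has $G_{\lambda_k,s_1}\subseteq G_{\lambda_k,s_2}$, with strict containment in general, so your induction hypothesis is strictly weaker than what the next step needs. The repair is immediate and is precisely what the paper records as \eqref{equ:G}: carry the invariant in the form $G_{\lambda_k,s_2}\subseteq A(\lambda_k)\subseteq A^*$, i.e., with the $s_2$-level set on the output side --- this is exactly the conclusion Lemma \ref{lem:mcactiveset} delivers, and it is preserved through the inner loop because $G_{\lambda_k,s_1}\subseteq G_{\lambda_k,s_2}$. Your second paragraph states this correct mechanism in words ("the output guarantee of one outer step is exactly the input hypothesis of the next"), but your third paragraph drops it when formalizing the induction.

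The same slip infects your base case, and there it hides a step that is genuinely not "immediate." What is needed is $G_{\lambda_0,s_2}=\emptyset$ (emptiness of the \emph{larger} set), i.e. $\|x^*\|_{\ell^\infty}<s_2\sqrt{2\lambda_0}$, and this does not follow from the choice $\lambda_0\geq\|\Psi^t y\|^2_{\ell^\infty}/2$ alone: that choice only gives $\sqrt{2\lambda_0}\geq\|\Psi^t y\|_{\ell^\infty}$, and without further argument $\|\Psi^t y\|_{\ell^\infty}$ could be much smaller than $\|x^*\|_{\ell^\infty}$ (cancellation between columns). The paper closes this with the MIP estimate and Assumption \ref{ass:epsilon}: $\|\Psi^t y\|_{\ell^\infty}\geq(1-(T-1)\nu)\|x^*\|_{\ell^\infty}-\epsilon\geq(1-T\nu+\nu-\beta)\|x^*\|_{\ell^\infty}>\|x^*\|_{\ell^\infty}/s_2$, using $s_2>1/(1-T\nu+\nu-\beta)$ from Lemma \ref{lem:mc-ss}. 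With these two corrections your argument coincides with the paper's proof. Your termination step is fine and is in fact spelled out more explicitly than in the paper: once $\sqrt{2\lambda_k}\,s_2\leq\min_{i\in A^*}|x_i^*|$ the active set must equal $A^*$, the iterate is the oracle solution, and the residual is the norm of the component of $\eta$ orthogonal to the range of $\Psi_{A^*}$, hence at most $\|\eta\|\leq\epsilon$, triggering line 10; note only that the bound $\|\eta\|\leq\epsilon$ is part of the data model in Section \ref{sec:prelim}, not of Assumption \ref{ass:epsilon}.
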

\begin{proof}
For each $\lambda_k$-problem, we denote by $A_{k,0}$ and $A_{k,\diamond}$ the active set for the
initial guess and the last inner step (i.e., $A(\lambda_k)$ in Algorithm \ref{alg:pdasc}),
respectively. Now with $s_1$ and $s_2$ from Lemma \ref{lem:mc-ss}, there holds $G_{\lambda,s_1}\subset G_{\lambda,s_2}$,
and using Lemma \ref{lem:mcactiveset}, for any index $k$ before the stopping criterion at line 10 of Algorithm
\ref{alg:pdasc} is reached, there hold
\begin{equation}\label{equ:G}
  G_{\lambda_k,s_1} \subseteq A_{k,0}\quad\mbox{and}\quad G_{\lambda_k,s_2}\subseteq A_{k,\diamond}.
\end{equation}
Note that for $k=0$, $G_{\lambda_0,s_2} = \emptyset$ and thus the assertion holds. To see this,
it suffices to check $\|x^*\|_{\ell^\infty} < s_2 \|\Psi^t y\|_{\ell^\infty}$. By Lemma
\ref{lem:mc} and the inequality $s_2 > 1/({1-T\nu + \nu -\beta})$ we obtain that
\begin{equation*}
  \begin{aligned}
    \|\Psi^t y\|_{\ell^\infty} &\geq \|\Psi_{A^*}^t\Psi_{A^*} x^*_{A^*}\|_{\ell^\infty} -\| \Psi^t \eta\|_{\ell^\infty} \\
     &\geq (1 - (T-1)\nu)\|x^*\|_{\ell^\infty} - \epsilon >\|x^*\|_{\ell^\infty}/s_2.
  \end{aligned}
\end{equation*}
Now for $k>0$, it follows by mathematical induction and the relation $A_{k,\diamond} = A_{k+1,0}$.
It follows from \eqref{equ:G} that during the iteration, the active set $A_{k,\diamond}$ always
lies in $A^*$. Further, for $k$ sufficiently large, by Lemma \ref{lem:mon}, the stopping
criterion at line 10 must be reached and thus the algorithm terminates; otherwise
\begin{equation*}
  A^* \subseteq G_{\lambda_k,s_1},
\end{equation*}
then the stopping criterion at line 10 is satisfied, which leads to a contradiction.
\end{proof}

Next we turn to the convergence of Algorithm \ref{alg:pdasc} under the RIP condition. Let $1 - (2\sqrt{T} + 1)\delta > 2\beta$, an argument analogous to Lemma
\ref{lem:mc-ss} implies that for any $\sqrt{\rho}\in ((2\delta\sqrt{T} + 2\beta)/({1-\delta}),1)$
there exist $s_1$ and $s_2$ such that
\begin{equation}\label{equ:s1s2}
\frac{1-\delta}{\delta \sqrt{T} + \beta}> s_1 > s_2 >\frac{1-\delta}{1 - \delta -\delta\sqrt{T} - \beta}, \quad   s_2 = 1+\frac{\delta\sqrt{T} + \beta}{1-\delta}s_1 , \quad \frac{s_2}{s_1} = \sqrt{\rho}.
\end{equation}

The next result is an analogue of Lemma \ref{lem:mcactiveset}.
\begin{lemma}
Let Assumption \ref{ass:epsilon} hold, $\delta \triangleq \delta_{T+1} \leq (1-2\beta)/(2\sqrt{T}+1)$, and $\sqrt{\rho}\in ((2\delta\sqrt{T} + 2\beta)/({1-\delta}),1)$. Let $s_1$ and $s_2$ are defined by \eqref{equ:s1s2}.
If $G_{\lambda,s_1}\subseteq A_k\subseteq A^*$, then $G_{\lambda,s_2}\subseteq A_{k+1}\subseteq A^*$.
\end{lemma}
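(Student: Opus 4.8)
The plan is to follow the proof of Lemma~\ref{lem:mcactiveset} line by line, substituting the mutual-coherence estimates by the restricted-isometry estimates of Lemma~\ref{lem:est-onestep}(b), and to be careful about the two norms involved. Set $A=A_k$, $B=A^*\setminus A$ and $I=S\setminus A$; by hypothesis $A\subseteq A^*$, so $B\subseteq I$, $I^*\subseteq I$ and $|A|+|B|=T$, whence every restricted-isometry constant that appears is dominated by $\delta=\delta_{T+1}$ through the monotonicity in Lemma~\ref{lem:rip}. Since the pair $(x,d)$ is produced by one primal--dual step on $A$, Lemma~\ref{lem:est-onestep}(b) gives $d_A=0$,
\begin{equation*}
  \|\bar{x}_A\|\leq \frac{\delta}{1-\delta}\|x_B^*\|+\frac{1}{\sqrt{1-\delta}}\epsilon,
\end{equation*}
together with the lower bound $|d_i|\geq |x_i^*|-\delta\|x_B^*\|-\epsilon-\delta\|\bar{x}_A\|$ for $i\in B$ and the upper bound $|d_j|\leq \delta\|x_B^*\|+\epsilon+\delta\|\bar{x}_A\|$ for $j\in I^*$.

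The key technical point, and the source of the factors $\sqrt{T}$ that appear in the present hypotheses but not in the mutual-coherence ones, is the reconciliation of the $\ell^2$ and $\ell^\infty$ norms. I would insert the bound on $\|\bar{x}_A\|$ into the three estimates above, then bound the deterministic terms by $\|x_B^*\|\leq \sqrt{T}\,\|x_B^*\|_{\ell^\infty}$ while keeping the noise term sharp as $\epsilon\leq \beta\|x_B^*\|_{\ell^\infty}$ (Assumption~\ref{ass:epsilon}), and finally use $1/\sqrt{1-\delta}\leq 1/(1-\delta)$ to merge all $\epsilon$-contributions. Writing $c=(\delta\sqrt{T}+\beta)/(1-\delta)$, this should collapse to the uniform bounds $|d_j|\leq c\,\|x_B^*\|_{\ell^\infty}$ for $j\in I^*$, and $|d_i|\geq |x_i^*|-c\,\|x_B^*\|_{\ell^\infty}$ for $i\in B$ as well as $|x_i|\geq |x_i^*|-\|\bar{x}_A\|\geq|x_i^*|- c\,\|x_B^*\|_{\ell^\infty}$ for $i\in A$, which are the RIP analogues of the two simplified inequalities in Lemma~\ref{lem:mcactiveset}.

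With these bounds the conclusion follows exactly the template of Lemma~\ref{lem:mcactiveset}, now driven by the relations in \eqref{equ:s1s2}. From $G_{\lambda,s_1}\subseteq A$ I get $\|x_B^*\|_{\ell^\infty}<s_1\sqrt{2\lambda}$, since every $i\in B$ lies outside $G_{\lambda,s_1}$. For $j\in I^*$ the bound $s_1<(1-\delta)/(\delta\sqrt{T}+\beta)$ gives $cs_1<1$, hence $|d_j|<cs_1\sqrt{2\lambda}<\sqrt{2\lambda}$; as $x_j=0$ this places $j$ in $I_{k+1}$, proving $A_{k+1}\subseteq A^*$. For $i\in G_{\lambda,s_2}\subseteq A^*$ I use $|x_i^*|\geq s_2\sqrt{2\lambda}$ and the identity $s_2=1+cs_1$: if $i\in I$ (so $i\in B$) then $|d_i|>s_2\sqrt{2\lambda}-cs_1\sqrt{2\lambda}=\sqrt{2\lambda}$, and if $i\in A$ then $d_i=0$ and likewise $|x_i|>\sqrt{2\lambda}$, so in either case \eqref{equ:con2} puts $i$ in $A_{k+1}$. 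The degenerate case $A=\emptyset$ is trivial, as in Lemma~\ref{lem:mcactiveset}.

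The step I expect to be the real obstacle is the middle one: arranging the norm conversions so that the constant collapses exactly to $c=(\delta\sqrt{T}+\beta)/(1-\delta)$, the quantity built into the definitions of $s_1,s_2$ in \eqref{equ:s1s2}. Inflating the noise term by $\sqrt{T}$ as well would yield the larger constant $(\delta+\beta)\sqrt{T}/(1-\delta)$, which is too big to satisfy $cs_1<1$ over the admissible range of $s_1$; keeping $\epsilon\leq\beta\|x_B^*\|_{\ell^\infty}$ and using $1/\sqrt{1-\delta}\leq 1/(1-\delta)$ rather than cruder estimates is precisely what makes the argument close.
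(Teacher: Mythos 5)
Your proposal is correct and follows essentially the same route as the paper's proof: one primal--dual step bounded via Lemma \ref{lem:est-onestep}(b), the merge $1/\sqrt{1-\delta}\leq 1/(1-\delta)$ together with $\|x_B^*\|\leq\sqrt{T}\|x_B^*\|_{\ell^\infty}$ and $\epsilon\leq\beta\|x_B^*\|_{\ell^\infty}$ to collapse everything to the constant $(\delta\sqrt{T}+\beta)/(1-\delta)$, and then the relations \eqref{equ:s1s2} to conclude $A_{k+1}\subseteq A^*$ and $G_{\lambda,s_2}\subseteq A_{k+1}$. The ``obstacle'' you flag is exactly the step the paper takes, so nothing is missing.
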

\begin{proof}
Let $A = A_k$, $B = A^*\backslash A$. Using the notation in Lemma \ref{lem:est-onestep}, we have
\begin{equation*}
  \begin{aligned}
    |x_i| &\geq |x_i^*| - \|\bar{x}_A\| \geq |x_i^*| - \frac{\delta \|x_B^*\| + \epsilon}{1 -\delta} , \quad \forall i\in A,\\
    |d_j| &\leq \delta\|x_B^*\| + \epsilon + \delta\|\bar{x}_A\| \leq \frac{\delta \|x_B^*\| + \epsilon}{1-\delta}, \quad \forall  j\in I^*,\\
    |d_i|&\geq |x_i^*| - \delta\|x_B^*\| - \epsilon - \delta\|\bar{x}_A\| \geq |x_i^*|-\frac{\delta \|x_B^*\| + \epsilon}{1-\delta}, \quad \forall i\in B.
  \end{aligned}
\end{equation*}
By the assumption $G_{\lambda,s_1}\subseteq A_k$, we have $\|x_B^*\|_{\ell^\infty} <s_1\sqrt{2\lambda}$.
Now using the relation $s_1 < (1-\delta)/({\delta \sqrt{T} + \beta})$ and Assumption
\ref{ass:epsilon}, we deduce
\begin{equation*}
\frac{\delta \|x_B^*\| + \epsilon}{1-\delta} \leq \frac{\delta\sqrt{T} + \beta}{1 - \delta}\|x_B^*\|_{\ell^\infty} < \sqrt{2\lambda}.
\end{equation*}
Thus for $j\in I^*$, $|d_i|<\sqrt{2\lambda}$, i.e., $A_{k+1}\subset A^*$. Similarly,
using the relations $s_2 =  1+s_1(\delta\sqrt{T} + \beta)/({1-\delta})$ and $s_1>
(1-\delta)/(1 - \delta -\delta\sqrt{T} - \beta)$, we arrive at that for any $i\in G_{\lambda,s_2}$, there holds
\begin{equation*}
    |x_i^*| - \frac{\delta \|x_B^*\| + \epsilon}{1-\delta} > s_2\sqrt{2\lambda} - \frac{\delta\sqrt{T}
    + \beta}{1 - \delta}s_1\sqrt{2\lambda} = \sqrt{2\lambda}. %\quad \forall i\in G_{\lambda,s_2}.
\end{equation*}
This implies that for $i\in G_{\lambda,s_2}\cap A$, $|x_i|>\sqrt{2\lambda}$, and for $i\in G_{\lambda,s_2}
\cap I$, $|d_i|>\sqrt{2\lambda}$. Consequently, \eqref{equ:con2} yields the desired relation $(G_{\lambda,s_2}\cap A)
\subseteq A_{k+1}$, and this concludes the proof of the lemma.
\end{proof}

Now we can state the convergence of Algorithm \ref{alg:pdasc} under the RIP assumption.
The proof is similar to that for Theorem \ref{thm:mc}, and hence omitted.
\begin{theorem}\label{thm:rip}
Let Assumption \ref{ass:epsilon} hold, and $\delta \triangleq \delta_{T+1} \leq (1-2\beta)/({2\sqrt{T}+1})$.
Then for any $\sqrt{\rho} \in \left((2\delta\sqrt{T} + 2\beta)/({1-\delta}),1\right)$, Algorithm \ref{alg:pdasc} converges in finite steps.
\end{theorem}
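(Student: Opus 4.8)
The plan is to mirror the proof of Theorem \ref{thm:mc} essentially verbatim, replacing Lemma \ref{lem:mcactiveset} by its RIP analogue (the lemma immediately preceding this theorem) and the mutual-coherence estimates of Lemma \ref{lem:mc} by the RIP estimates of Lemma \ref{lem:rip}. For each $\lambda_k$-problem I would write $A_{k,0}$ for the active set of the initial guess and $A_{k,\diamond}$ for the active set produced at the last inner PDAS step (i.e. $A(\lambda_k)$ in Algorithm \ref{alg:pdasc}), and recall that continuation couples successive subproblems through $A_{k,\diamond}=A_{k+1,0}$. With $s_1>s_2$ from \eqref{equ:s1s2}, the definition \eqref{equ:sets} gives the elementary inclusion $G_{\lambda_k,s_1}\subseteq G_{\lambda_k,s_2}$, so it suffices to propagate the pair of inclusions
\begin{equation*}
  G_{\lambda_k,s_1}\subseteq A_{k,0}\quad\text{and}\quad G_{\lambda_k,s_2}\subseteq A_{k,\diamond}\subseteq A^*
\end{equation*}
along the iteration.

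I would establish these inclusions by induction on $k$. The base case $k=0$ reduces to $G_{\lambda_0,s_2}=\emptyset$, since $A_{0,\diamond}=A(\lambda_0)=\emptyset$ by initialization; concretely one must check $\|x^*\|_{\ell^\infty}<s_2\sqrt{2\lambda_0}$. Because $\lambda_0\geq\tfrac12\|\Psi^ty\|_{\ell^\infty}^2$, it is enough to bound $\|\Psi^ty\|_{\ell^\infty}$ from below: picking $i\in A^*$ with $|x_i^*|=\|x^*\|_{\ell^\infty}$ and writing $\psi_i^ty=|x_i^*|+\psi_i^t\Psi_{A^*\setminus\{i\}}x^*_{A^*\setminus\{i\}}+\psi_i^t\eta$, Lemma \ref{lem:rip} (with $\|x^*_{A^*\setminus\{i\}}\|\leq\sqrt{T-1}\,\|x^*\|_{\ell^\infty}$ and Assumption \ref{ass:epsilon}) yields a lower bound of the form $(1-\delta\sqrt{T-1}-\beta)\|x^*\|_{\ell^\infty}$, which I would compare against $\|x^*\|_{\ell^\infty}/s_2$ using $s_2>(1-\delta)/(1-\delta-\delta\sqrt{T}-\beta)$ from \eqref{equ:s1s2}. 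For the inductive step, assuming $G_{\lambda_k,s_1}\subseteq A_{k,0}\subseteq A^*$, one inner PDAS step (the preceding lemma) gives $G_{\lambda_k,s_2}\subseteq A_{k,\diamond}\subseteq A^*$; since $s_1>s_2$ forces $G_{\lambda_k,s_1}\subseteq G_{\lambda_k,s_2}$, the hypothesis is reproduced and the invariant $G_{\lambda_k,s_2}\subseteq A_{k,\diamond}\subseteq A^*$ persists through all inner iterations regardless of $J_{max}$. Passing to $\lambda_{k+1}=\rho\lambda_k$ and using $s_2=\sqrt{\rho}\,s_1$, so that $\sqrt{2\lambda_{k+1}}\,s_1=\sqrt{2\lambda_k}\,s_2$, I obtain $G_{\lambda_{k+1},s_1}=G_{\lambda_k,s_2}\subseteq A_{k,\diamond}=A_{k+1,0}$, which closes the induction.

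Finally I would argue finite termination. The propagated inclusions show $A_{k,\diamond}$ never leaves $A^*$, so the iterates stay supported on (a subset of) the true support, and the monotonicity afforded by Lemma \ref{lem:mon} prevents the support from shrinking as $\lambda$ decreases. Since $\lambda_k=\rho^k\lambda_0\to0$, for $k$ large enough $\sqrt{2\lambda_k}\,s_1<\min_{i\in A^*}|x_i^*|$, whence $A^*\subseteq G_{\lambda_k,s_1}\subseteq A_{k,0}\subseteq A^*$ forces $A_{k,0}=A^*$; the subproblem then reproduces the oracle solution $x^o=\Psi_{A^*}^\dag y$, for which $\|\Psi x^o-y\|\leq\epsilon$, so the discrepancy stopping criterion at line 10 of Algorithm \ref{alg:pdasc} is met and the algorithm halts. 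The main obstacle I anticipate is the base-case estimate: unlike the purely $\ell^\infty$ bookkeeping in the MIP proof, the RIP estimates of Lemma \ref{lem:rip} are naturally stated in the $\ell^2$ norm, whereas the level sets $G_{\lambda,s}$ and the coordinatewise condition \eqref{equ:con2} are $\ell^\infty$ objects, so the $\ell^2$-to-$\ell^\infty$ conversion (the $\sqrt{T}$ and $\sqrt{T-1}$ factors) must be tracked carefully to ensure the threshold inequalities, and in particular that $s_1,s_2$ of \eqref{equ:s1s2} are admissible, remain consistent under the stated bound on $\delta$ and the admissible range of $\rho$.
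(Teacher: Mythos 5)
Your proposal is correct and takes essentially the same route as the paper: the paper omits the proof of this theorem, declaring it analogous to Theorem \ref{thm:mc}, and your argument is precisely that adaptation --- propagating the inclusions $G_{\lambda_k,s_1}\subseteq A_{k,0}$ and $G_{\lambda_k,s_2}\subseteq A_{k,\diamond}\subseteq A^*$ via the RIP active-set lemma, closing the induction with the rescaling identity $\sqrt{2\lambda_{k+1}}\,s_1=\sqrt{2\lambda_k}\,s_2$ (so $G_{\lambda_{k+1},s_1}=G_{\lambda_k,s_2}$), and terminating through the discrepancy criterion once $A^*\subseteq G_{\lambda_k,s_1}$ forces $A_{k,0}=A^*$ and hence the oracle solution. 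Your base-case bound $\|\Psi^t y\|_{\ell^\infty}\geq(1-\delta\sqrt{T-1}-\beta)\|x^*\|_{\ell^\infty}$, compared against $s_2>(1-\delta)/(1-\delta-\delta\sqrt{T}-\beta)$, correctly fills in the one step the paper leaves implicit; the only cosmetic blemish is your aside crediting Lemma \ref{lem:mon} (a statement about global minimizers, not the iterates) with preventing support shrinkage, which is inessential to the argument and can simply be dropped.
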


\begin{remark}
Theorems \ref{thm:mc} and \ref{thm:rip} indicate that Algorithm \ref{alg:pdasc} converges
in finite steps, and the active set $A(\lambda_k)$ remains a subset of the true active set $A^*$.
\end{remark}

\begin{corollary}
Let the assumptions in Theorem \ref{thm:global} hold. Then Algorithm \ref{alg:pdasc} terminates at the oracle solution $x^o$.
\end{corollary}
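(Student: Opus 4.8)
The plan is to combine the two convergence theorems with the global-minimizer characterization to pin down the terminal output of Algorithm \ref{alg:pdasc}. First I would invoke Theorem \ref{thm:mc} (or Theorem \ref{thm:rip} in the RIP case) to conclude that the algorithm halts after finitely many outer iterations, say at index $k^\star$, when the discrepancy stopping criterion $\|\Psi x(\lambda_{k^\star}) - y\|\leq \epsilon$ is first met. The convergence theorems also guarantee the crucial structural fact that throughout the iteration the active set stays inside the true support, i.e. $A(\lambda_k)\subseteq A^*$ for every $k\leq k^\star$. Thus at termination we have $|A(\lambda_{k^\star})|\leq T$, and on this active set the primal variable is the least-squares solution $x(\lambda_{k^\star}) = \Psi_{A(\lambda_{k^\star})}^\dag y$, with the dual variable $d=\Psi^t(y-\Psi x(\lambda_{k^\star}))$ satisfying the coordinatewise-minimizer condition \eqref{equ:con2}.

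Next I would argue that the terminating active set is in fact all of $A^*$, so that $x(\lambda_{k^\star})=x^o$. Suppose instead that $A(\lambda_{k^\star})\subsetneq A^*$, so $B=A^*\setminus A(\lambda_{k^\star})$ is nonempty. Since $x(\lambda_{k^\star})$ is the least-squares fit on a proper subset of $A^*$, one obtains the lower bound on the residual coming from the uncaptured components $x_B^*$: by the estimates in Lemma \ref{lem:est-onestep} (specialized exactly as in the proof of Theorem \ref{thm:global}), the residual $\|\Psi x(\lambda_{k^\star})-y\| = \|\Psi_B x_B^* + \eta - \Psi_A \bar{x}_A\|$ is bounded below by a positive multiple of $\min_{i\in A^*}|x_i^*|$, which under Assumption \ref{ass:epsilon} and the coherence/RIP hypotheses of Theorem \ref{thm:global} strictly exceeds $\epsilon$. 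This contradicts the fact that the discrepancy criterion was satisfied at $k^\star$. Hence $A(\lambda_{k^\star})=A^*$, and therefore $x(\lambda_{k^\star}) = \Psi_{A^*}^\dag y = x^o$.

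Finally I would note that $x^o$ is precisely the unique global minimizer of $J_{\lambda_{k^\star}}$ under the hypotheses of Theorem \ref{thm:global}, provided the terminal parameter $\lambda_{k^\star}$ lies in the admissible window $(\epsilon^2/2,\xi)$; the continuation schedule $\lambda_k=\rho^k\lambda_0$ with $\rho\in(0,1)$ sweeps $\lambda$ downward through this interval, and the discrepancy principle is calibrated to stop the descent exactly when $\lambda$ enters the regime where the oracle support is recovered. The main obstacle I anticipate is the careful bookkeeping at the stopping index: one must verify that the discrepancy criterion cannot be triggered prematurely (while $A(\lambda_k)$ is still a strict subset of $A^*$), which is exactly where the residual lower bound $\|\Psi x(\lambda_k)-y\| > \epsilon$ for strict subsets must be shown to hold uniformly over the iterates $A(\lambda_k)\subseteq A^*$ — and this reuses the coordinatewise-minimizer residual estimate from the proof of Theorem \ref{thm:global} rather than requiring any new computation.
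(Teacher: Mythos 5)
Your proposal is correct and takes essentially the same route as the paper's proof: the convergence theorems (Theorems \ref{thm:mc} and \ref{thm:rip}) keep $A(\lambda_k)\subseteq A^*$ throughout, and the estimate from the proof of Theorem \ref{thm:global} shows that for any strict subset $A\subsetneq A^*$ with $x=\Psi_A^\dag y$ one has $\|\Psi x-y\|>\epsilon$, so the discrepancy criterion at line 10 cannot fire before the oracle solution $x^o$ is reached. The only difference is cosmetic: your third paragraph's concern about whether the terminal $\lambda_{k^\star}$ lies in the window $(\epsilon^2/2,\xi)$ is unnecessary, since (as your final paragraph itself observes) the residual lower bound for strict subsets holds uniformly in $\lambda$, which is exactly how the paper argues.
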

\begin{proof}
First, we note the monotonicity relation $A(\lambda_k)\subset A^*$ before the stopping criterion
at line 10 of Algorithm \ref{alg:pdasc} is reached. For any $A\subsetneq A^*$, let $x = \Psi_A^\dag
y$. Then by the argument in the proof of Theorem \ref{thm:global}, we have
\begin{equation*}
J_\lambda(x) = \tfrac{1}{2}\|\Psi x - y\|^2 + \lambda |A| > \tfrac{1}{2}\epsilon^2 + \lambda T \Rightarrow \|\Psi x - y\| > \epsilon,
\end{equation*}
which implies that the stopping criterion at line 10 in Algorithm \ref{alg:pdasc} cannot be
satisfied until the oracle solution $x^o$ is reached.
\end{proof}

\subsection{Connections with other algorithms}

Now we discuss the connections of Algorithm \ref{alg:pdasc} with two existing greedy
methods, i.e., orthogonal matching pursuit (OMP) and hard thresholding pursuit (HTP).

\paragraph{Connection with the OMP.} To prove the convergence of Algorithm \ref{alg:pdasc}, we require
either the MIP condition ($\nu < (1-2\beta)/({2T - 1})$) or the RIP condition ($
\delta_{T+1} \leq (1-2\beta)/({2\sqrt{T}+1})$) on the sensing matrix $\Psi$. These
assumptions have been used to analyze the OMP before: MIP appeared in
\cite{CaiWang:2011} and RIP appeared in \cite{HuangZhu:2011}. Further, for the OMP, the MIP
assumption is fairly sharp, but the RIP assumption can be improved \cite{Zhang:2011,MoShen:2012}.
%If the true signal has a strong decay property, i.e., at least geometrically decreasing, a weaker RIP condition
%($T$-independent) suffices the convergence analysis of the OMP \cite{DavenportWakin:2010}. For the PDASC,
%if the true signal decays fast, we have a similar result {\color{red} state the result here and give a short proof??}.
%Further, in that case the stopping criterion at line 6 of Algorithm \ref{alg:pdasc} for each fixed $\lambda$ value
%can be changed to $A_k = A_{k+1}$.
Our convergence analysis under these assumptions, unsurprisingly, follows the same line of thought as that
for the OMP, in that we require the active set $A(\lambda_k)$ always lies in the true active set
$A^*$ during the iteration. However, we note that this requirement is unnecessary for the PDASC,
since the active set can move inside and outside the true active set $A^*$ during the iteration. The
numerical examples in section \ref{sec:numeric} below confirm this observation. This makes the PDASC much
more flexible than the OMP.

\paragraph{Connection with the HTP.} Actually, the HTP due to Foucart \cite{Foucart:2011} can be viewed
a primal-dual active set method in the $T$-version, i.e., at each iteration, the active set is chosen
by the first $T$-component for both primal and dual variables. This is equivalent to a variable regularization
parameter $\lambda$, where $\sqrt{2\lambda}$ is set to the $T$-th components of $|x^k| + |d^k|$ at each iteration.
Naturally, one can also apply a continuation strategy on the parameter $T$.

\section{Numerical tests}\label{sec:numeric}
In this section we present numerical examples to illustrate the efficiency and accuracy of the proposed PDASC algorithm.
The sensing matrix $\Psi$ is of size $n\times p$, the true solution $x^*$ is a $T$-sparse
signal with an active set $A^*$. The dynamical range $R$ of the true signal $x^*$ is defined by $R = M/m$,
with $M = \max\{|x^{*}_{i}:i\in A^{*}|\}$ and $m= \min\{|x^{*}_{i}|:i\in A^{*}\}=1$.
The data $y$ is generated by
\begin{equation*}
  y = \Psi x^* + \eta,
\end{equation*}
where $\eta$ denotes the measurement noise, with each entry $\eta_i$ following the
Gaussian distribution $N(0,\sigma^2)$ with mean zero and standard deviation
$\sigma$. The exact noise level $\epsilon$ is given by $\epsilon =\|\eta\|_2$.

In Algorithm \ref{alg:pdasc}, we always take $\lambda_{0} = \|\Psi^t y\|_{\ell^\infty}$, and
$\lambda_{min} = \mbox{1e-15}\lambda_{0}$. The choice of the number of grid points $N$ and
the maximum number $J_{max}$ of inner iterations will be specified later.

\subsection{The behavior of the PDASC algorithm}
First we study the influence of the parameters in the PDASC algorithm on the exact recovery probability.
To this end, we fix $\Psi$ to be a $500\times 1000$ random Gaussian matrix, and $\sigma = \mbox{1e-2}$.
All the results are computed based on 100 independent realizations of the problem setup. To this
end, we consider the following three settings:
\begin{enumerate}
\item[(a)] $J_{max} =5$, and varying $N$; see Fig. \ref{fig:grid}(a).
\item[(b)] $N=100$, and varying $J_{max}$; see Fig. \ref{fig:grid}(b).
\item[(c)] $N=100$, $J_{max}=5$, and an approximate noise level $\bar{\epsilon}$; see Fig. \ref{fig:grid}(c).
\end{enumerate}

We observe that the influence of the parameters $N$ and $J_{max}$ is very mild on the exact support
recovery probability. In particular, a reasonably small value for these parameters (e.g. $N=50$, $J_{max} =1$) is
sufficient for accurately recovering the exact active set $A^*$. Unsurprisingly, a very small
value of $N$ can degrade the accuracy of active set recovery greatly, due to insufficient
resolution of the solution path. In practice, the exact noise level $\epsilon$ is not always
available, and often only a rough estimate $\bar{\epsilon}$ is provided. The use of the estimate
$\bar{\epsilon}$ in place of the exact one $\epsilon$ in Algorithm \ref{alg:pdasc} may sacrifice the
recovery probability. Hence it is important to study the sensitivity of Algorithm \ref{alg:pdasc}
with respect to the variation of the parameter $\epsilon$. We observe from Fig. \ref{fig:grid}(c) that the
parameter $\epsilon$ does not affect the recovery
probability much, unless the estimate $\bar{\epsilon}$ is grossly erroneous.

\begin{figure}[htb!]
  \centering
  \begin{tabular}{cc}
   \includegraphics[trim = 1cm 0cm 1cm 0cm, clip=true,width=6cm]{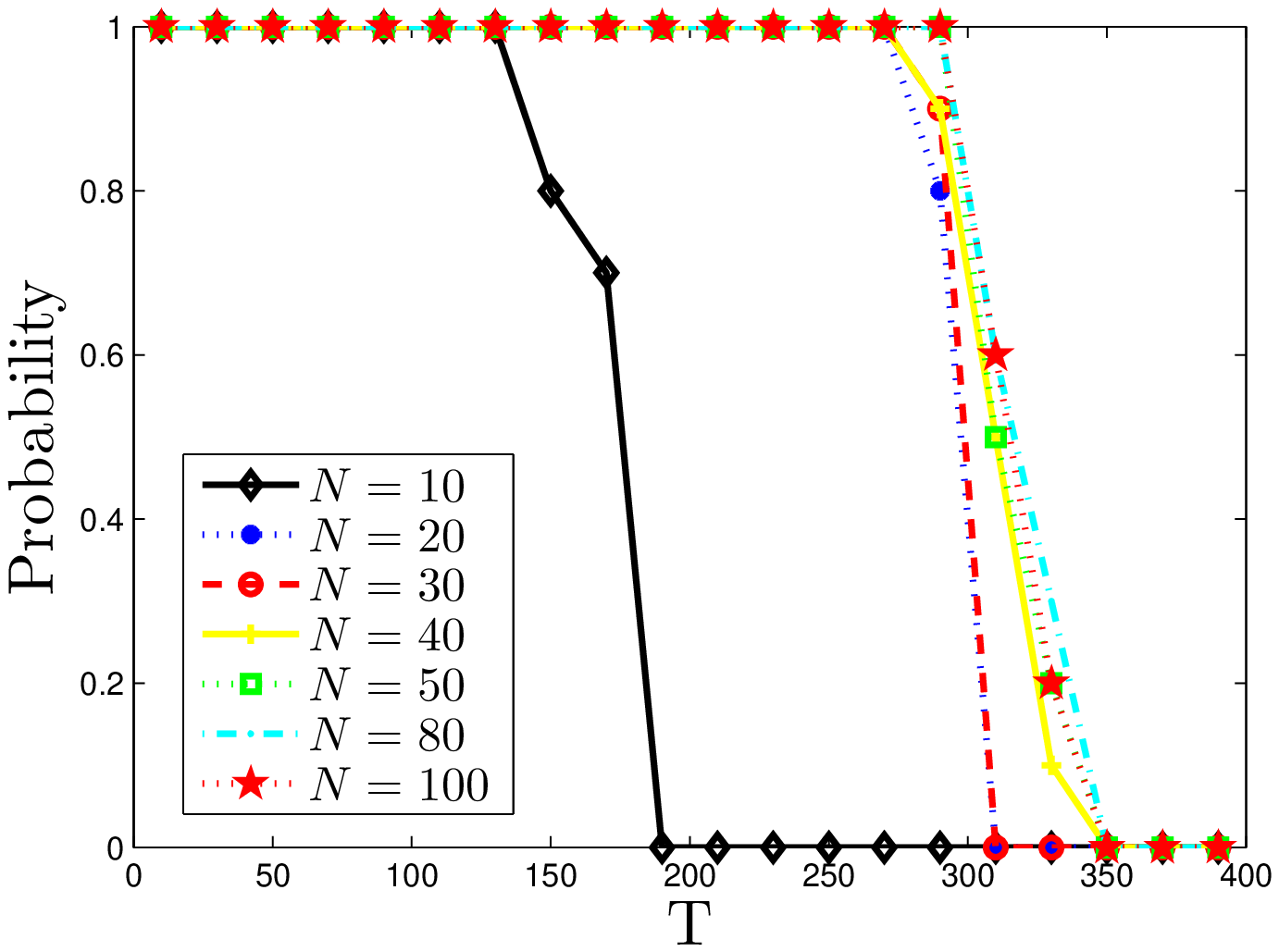} &
   \includegraphics[trim = 1cm 0cm 1cm 0cm, clip=true,width=6cm]{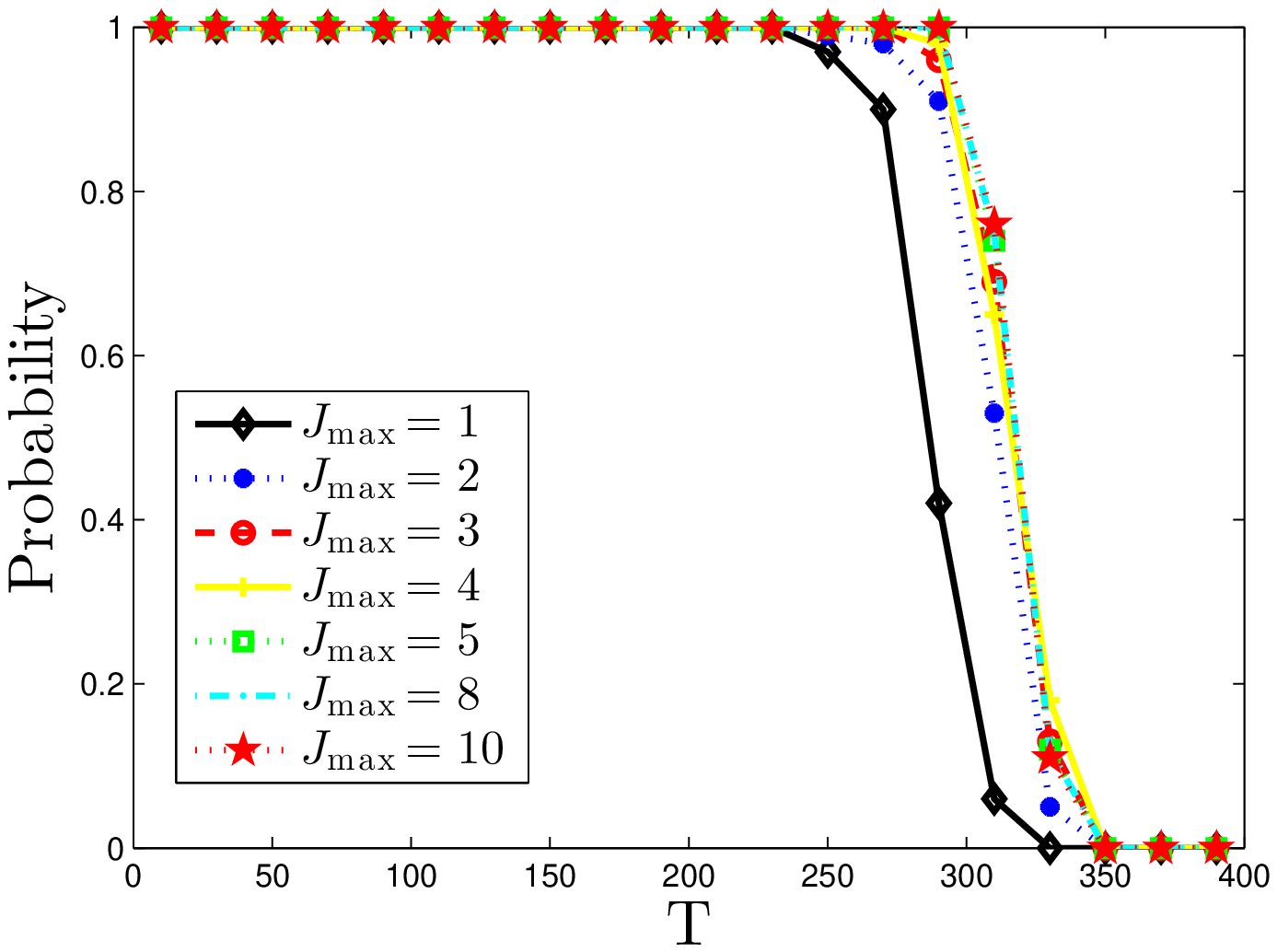}\\
   (a) $N$ & (b) $J_{max}$\\
   \includegraphics[trim = 1cm 0cm 1cm 0cm, clip=true,width=6cm]{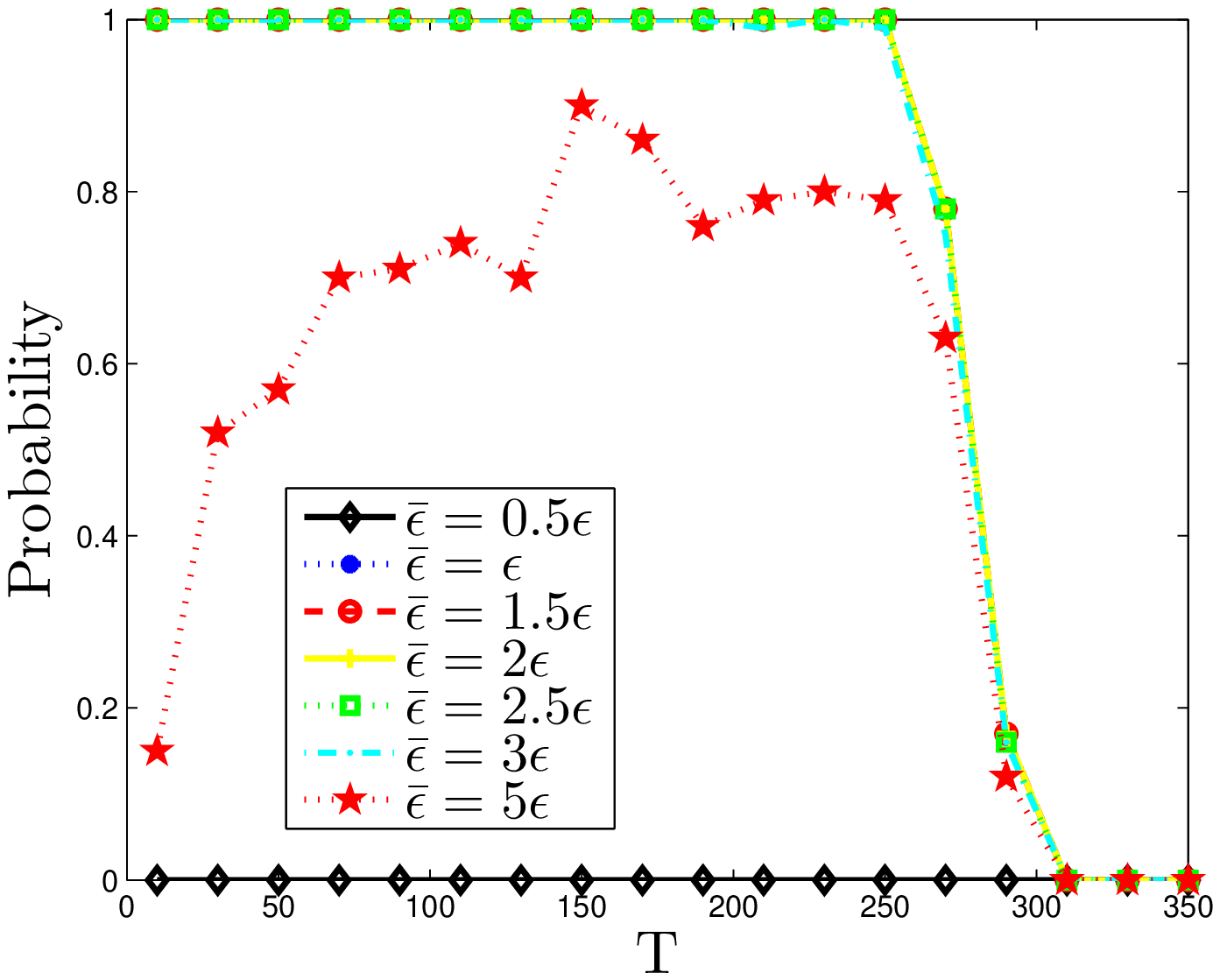}\\
   (c) $\epsilon$\\
  \end{tabular}
   \caption{The influence of the algorithmic parameters ($N$, $J_{max}$ and $\epsilon$)
   on the exact recovery probability.}\label{fig:grid}
\end{figure}

To gain further insight into the PDASC algorithm, in Fig. \ref{fig:acinout},
we show the evolution of the active set (for simplicity let $A_k = A(\lambda_k)$) . It is
observed that the active set $A_k$ can generally move both ``inside'' and ``outside''
of the true active set $A^*$. This is in sharp contrast to the OMP, where the
size of the active set is monotone during
the iteration. The flexible change in the active set might be essential for
the efficiency of the algorithm. This observation is valid for random
Gaussian, random Bernoulli and partial DCT sensing matrices.

\begin{figure}[htb!]
  \centering
  \begin{tabular}{cc}
    \includegraphics[trim = 0cm 0cm 0cm 0cm, clip=true,width=6cm]{{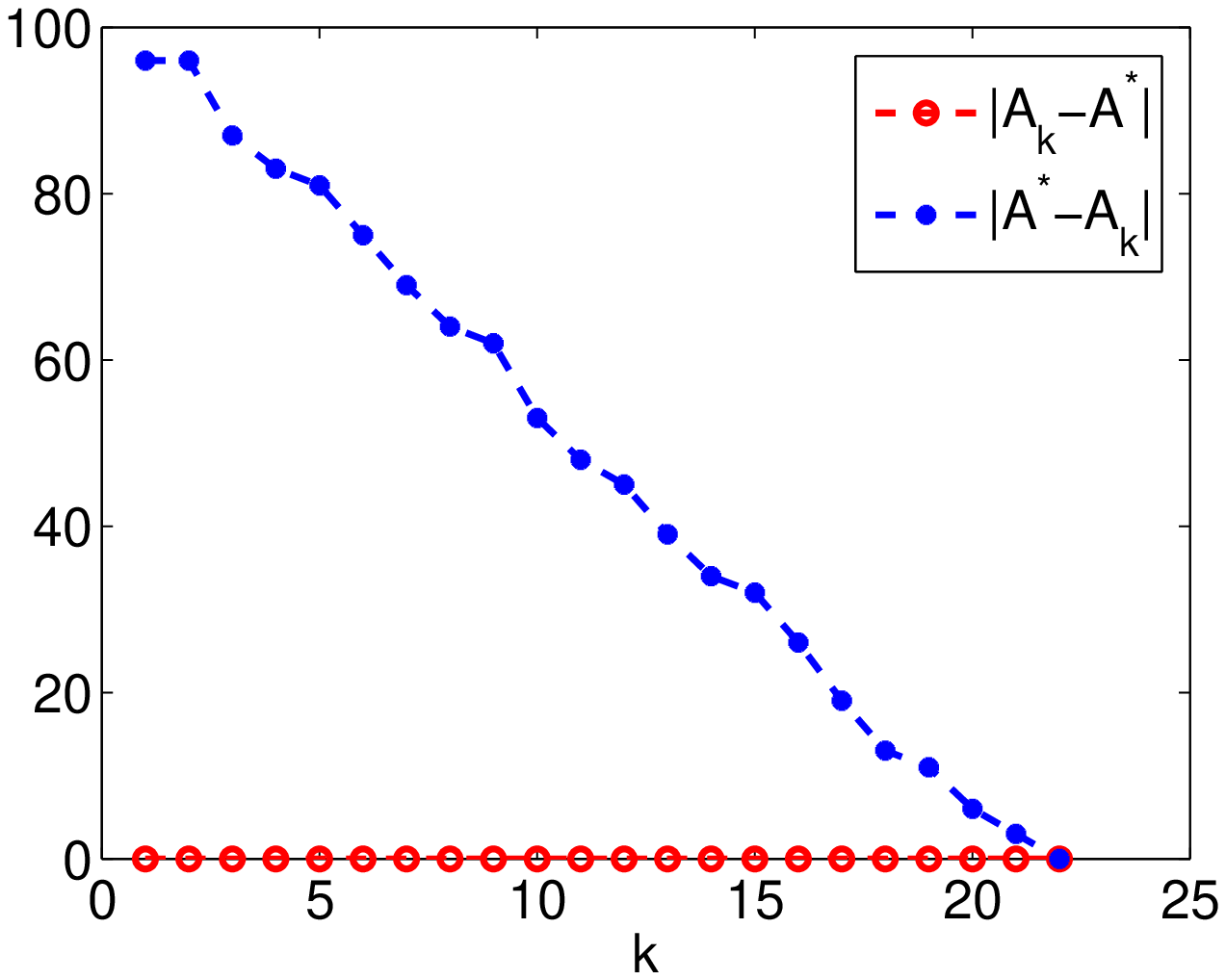}} &\includegraphics[trim = 0cm 0cm 0cm 0cm, clip=true,width=6cm]{{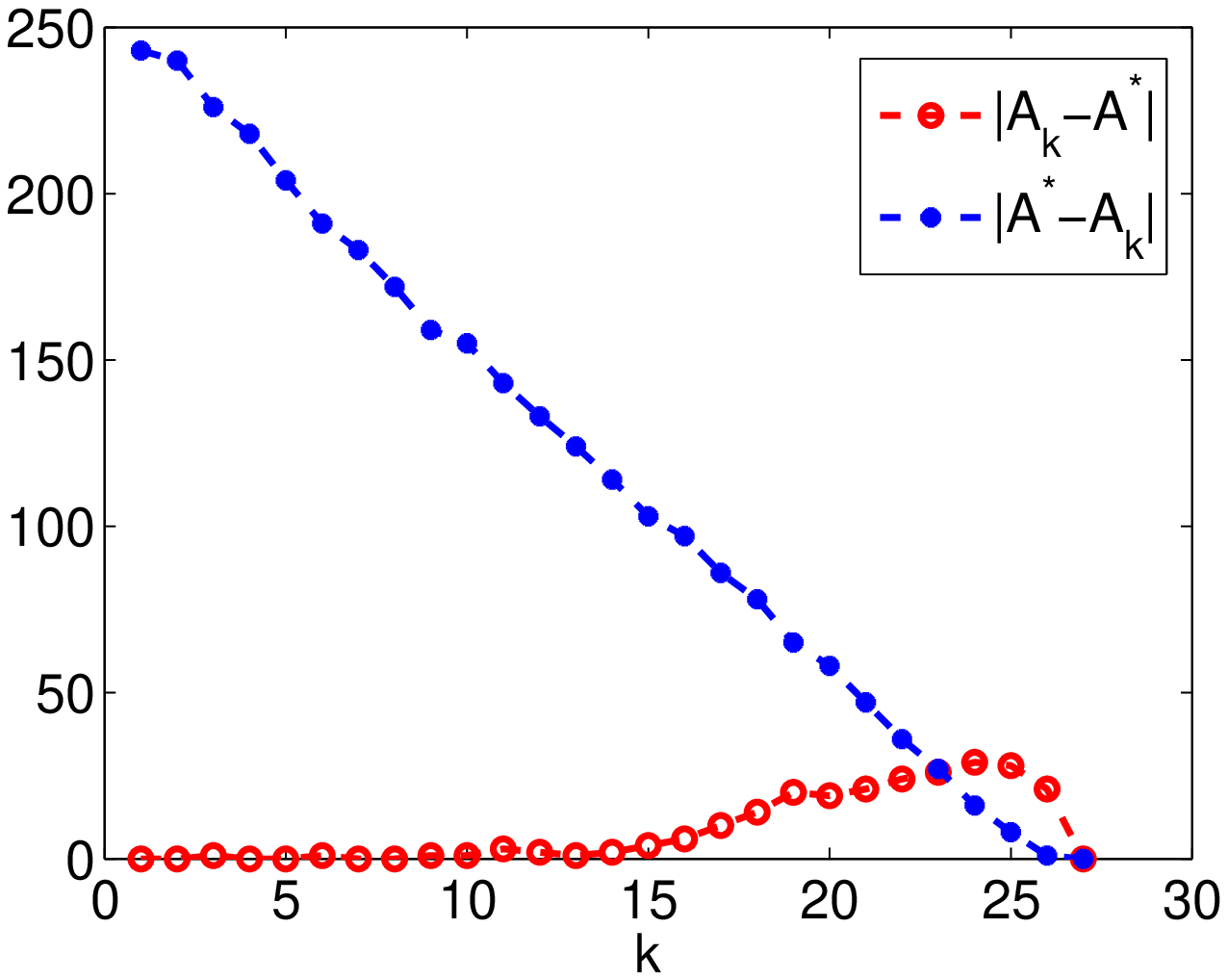}}\\
    (a) random Gaussian, $T=100$  & (b) random Gaussian, $T=250$\\
    \includegraphics[trim = 0cm 0cm 0cm 0cm, clip=true,width=6cm]{{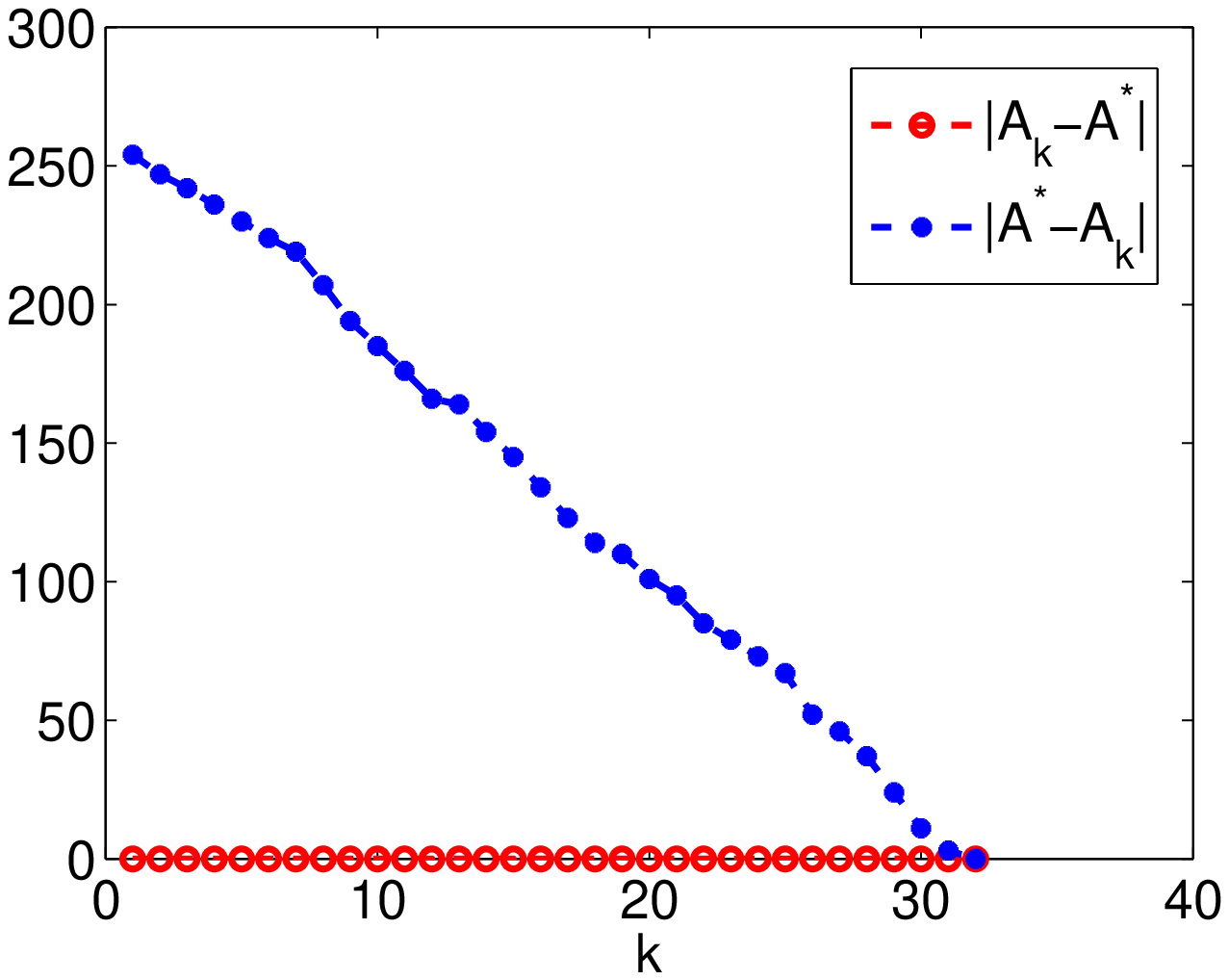}} &\includegraphics[trim = 0cm 0cm 0cm 0cm, clip=true,width=6cm]{{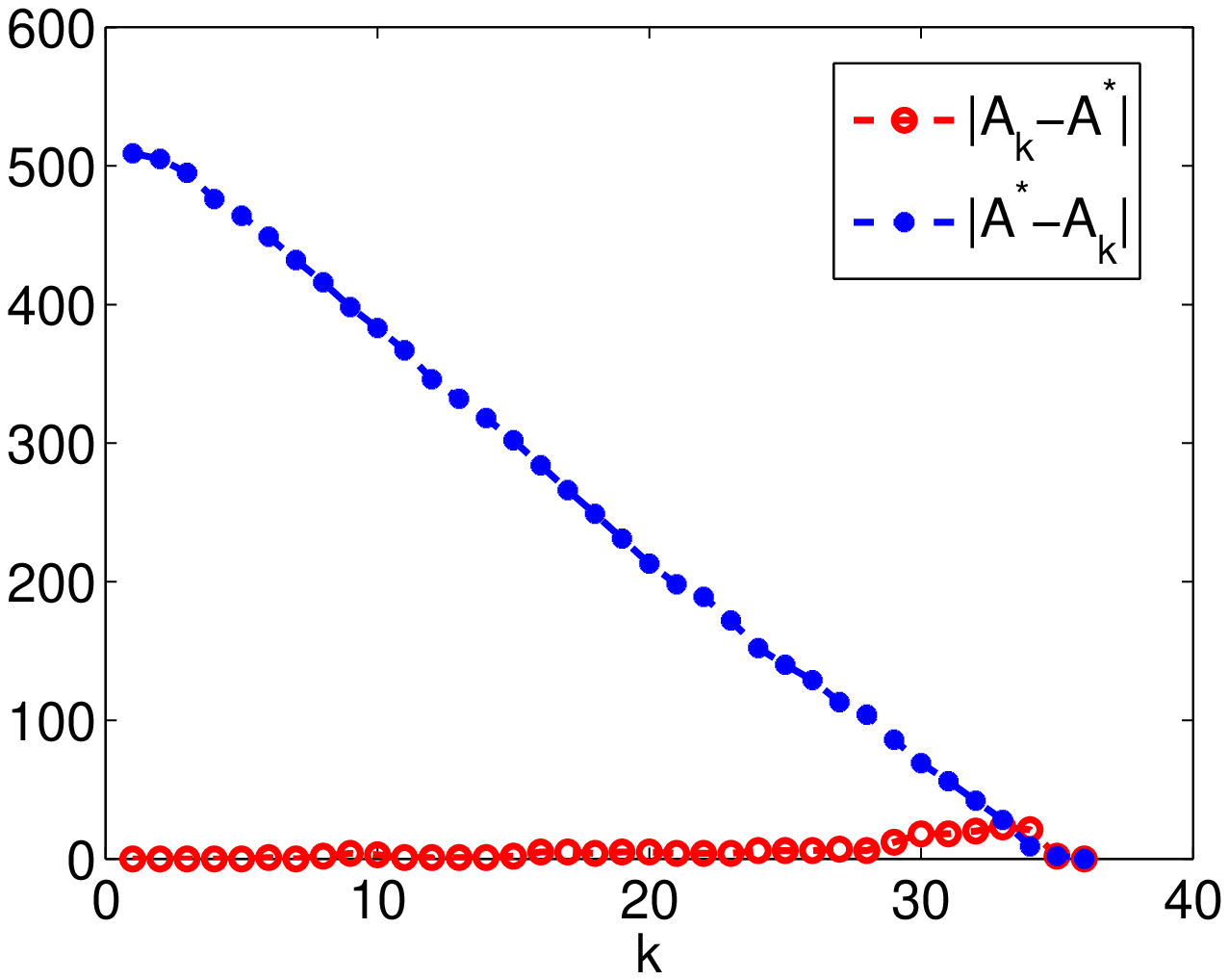}}\\
    (c) random Bernoulli, $T=2^8$ & (d) random Bernoulli, $T=2^9$\\
    \includegraphics[trim = 0cm 0cm 0cm 0cm, clip=true,width=6cm]{{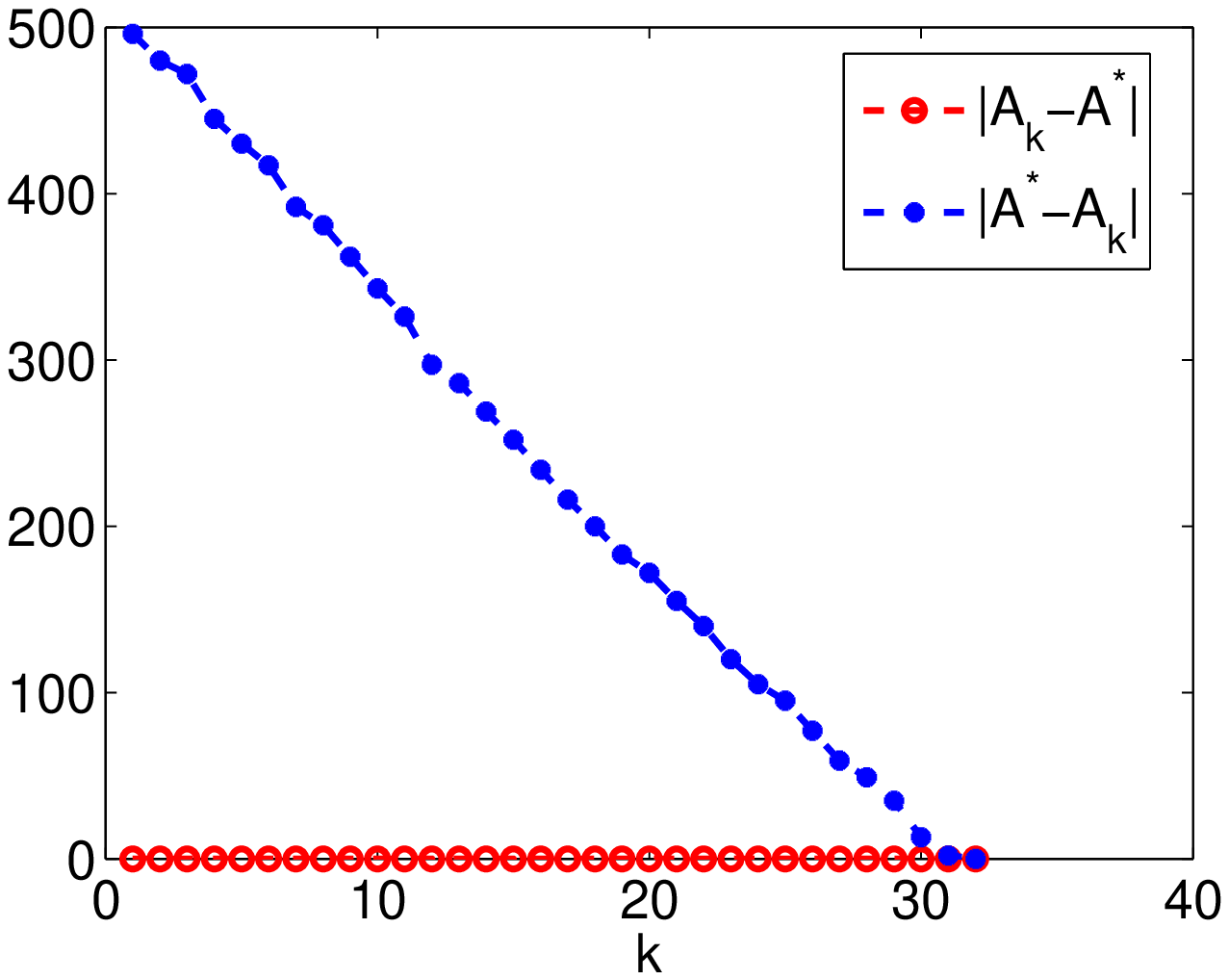}} &\includegraphics[trim = 0cm 0cm 0cm 0cm, clip=true,width=6cm]{{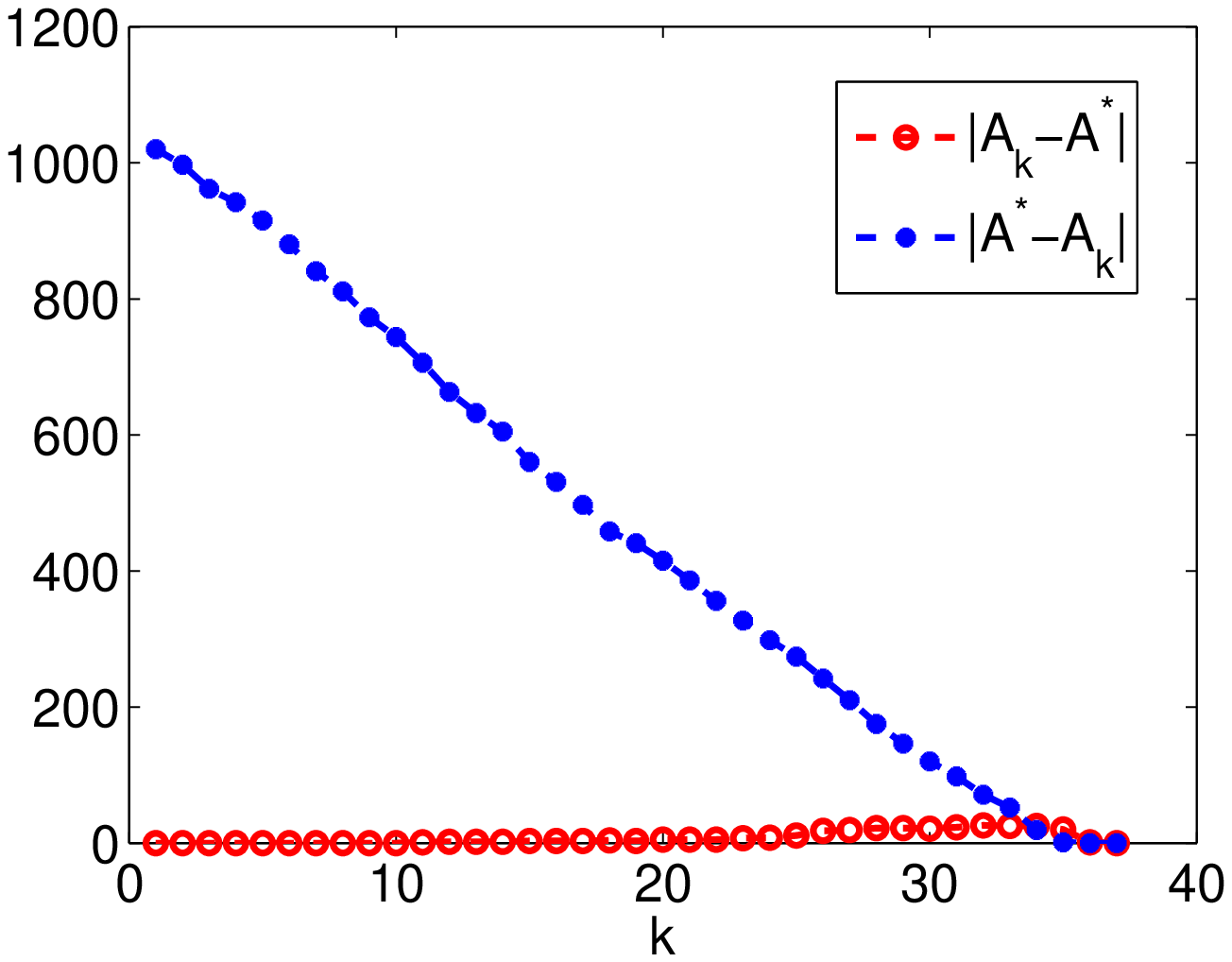}}\\
    (e) partial DCT, $T=2^9$ & (f) partial DCT, $T=2^{10}$
  \end{tabular}
  \caption{Numerical results for random Gaussian (top row, $R=100$, $n=500$, $p=1000$, $\sigma=\mbox{1e-3}$),
  random Bernoulli (middle row, $R = 1000$, $n = 2^{10}$, $p = 2^{12}$, $\sigma =\mbox{1e-3}$)
  and partial DCT (bottom row, $R = 1000$, $n = 2^{11}$, $p = 2^{13}$, $\sigma =\mbox{1e-3}$) sensing matrix.
  The parameters $N$ and $J_{max}$ are set to $N=50$ and $J_{max}=1$, respectively.
  }\label{fig:acinout}
%\caption{Random Gaussian (top panels) with $R = 1, n = 500, p = 1000,T=30,\sigma =\mbox{1e-3}, N=50,\lambda_{min}
% =\mbox{1e-15},J_{\max}=5$ and $R = 1, n = 500, p = 1000,T=80,\sigma =\mbox{1e-3},N=50,\lambda_{min}
%=\mbox{1e-15},J_{\max}=5$ for top left and top right respectively.
%
%  Random Bernoulli (middle panels)
%  with $R = 3, n = 2^{10}, p = 2^{12},T=2^7,\sigma =\mbox{1e-3},N=50,\lambda_{min} =\mbox{1e-15},J_{\max}=5$ and
%$R = 3, n = 2^{10}, p = 2^{12},T=510,\sigma = \mbox{1e-3},N=50,\lambda_{min} =\mbox{1e-15},J_{\max}=5$ for middle left and middle right respectively.
%
% Partial DCT  (bottom panels) with $R = 10, n = 2^{10}, p = 2^{13},T=2^7,\sigma =\mbox{1e-2},N=50,
%\lambda_{min} =\mbox{1e-15}, J_{\max}=5$ and $R = 10, n = 2^{10}, p = 2^{13},T=190,\sigma = \mbox{1e-2},
%N=50,\lambda_{min} =\mbox{1e-15},J_{\max}=5$ for bottom left and bottom right respectively}\label{fig:acinout}
\end{figure}

For each $\lambda_k$, with $x({\lambda_{k-1}})$  ($x({\lambda_0})=0$) as the initial guess, the PDASC
generally reaches convergence within a few iterations, cf. Fig. \ref{fig:convsup}, which is observed for
random Gaussian, random Bernoulli and partial DCT sensing matrices. This is attributed to the local
superlinear convergence of the PDAS algorithm. Hence, when coupled with the continuation strategy,
the PDASC procedure is very efficient.

\begin{figure}[htb!]
  \centering
  \begin{tabular}{cc}
   \includegraphics[trim = 0cm 0cm 1cm 0cm, clip=true,width=7cm]{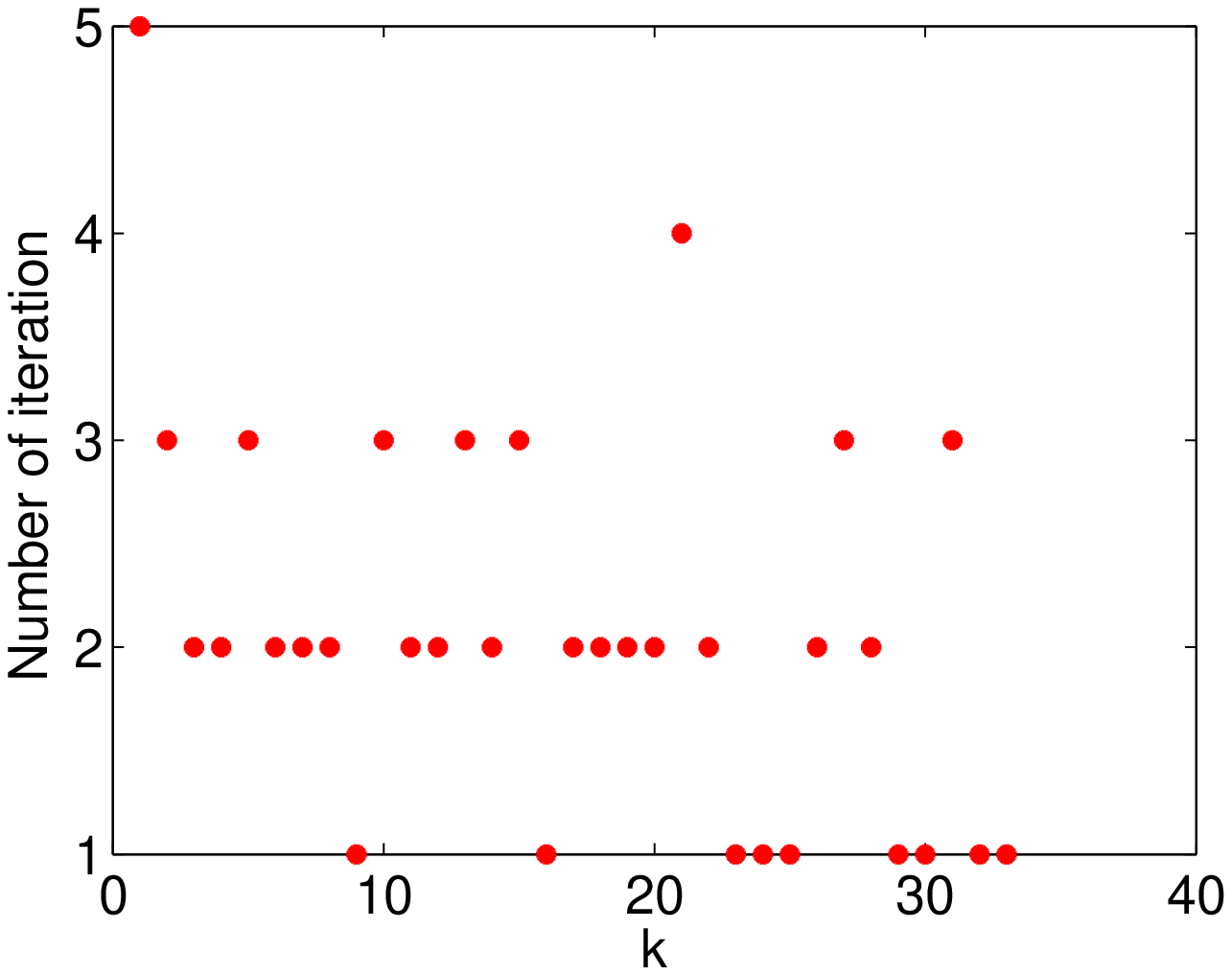} &
   \includegraphics[trim = 0cm 0cm 1cm 0cm, clip=true,width=7cm]{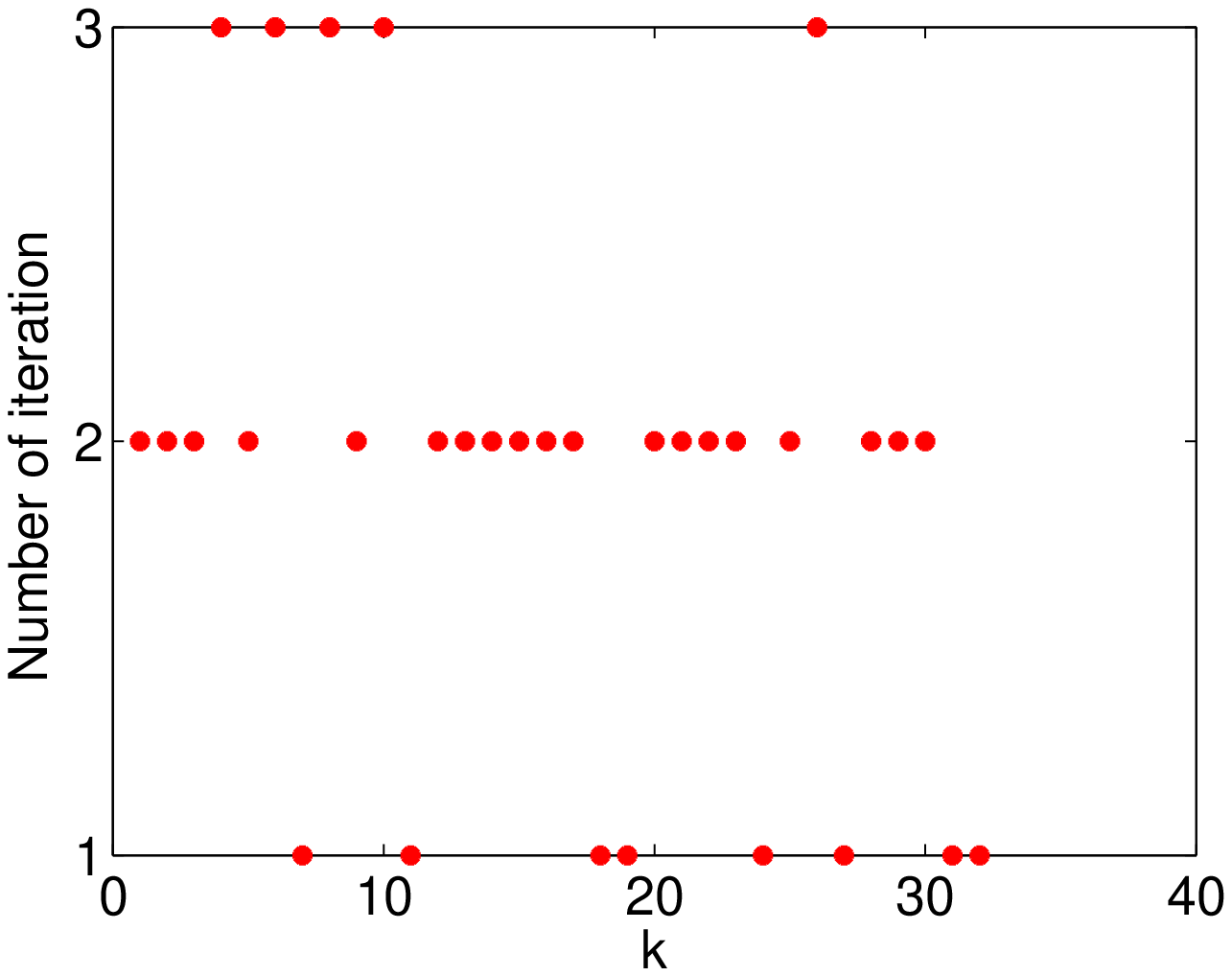}\\
   (a) Random Gaussian & (b) Random Bernoulli\\
   \includegraphics[trim = 0cm 0cm 1cm 0cm, clip=true,width=7cm]{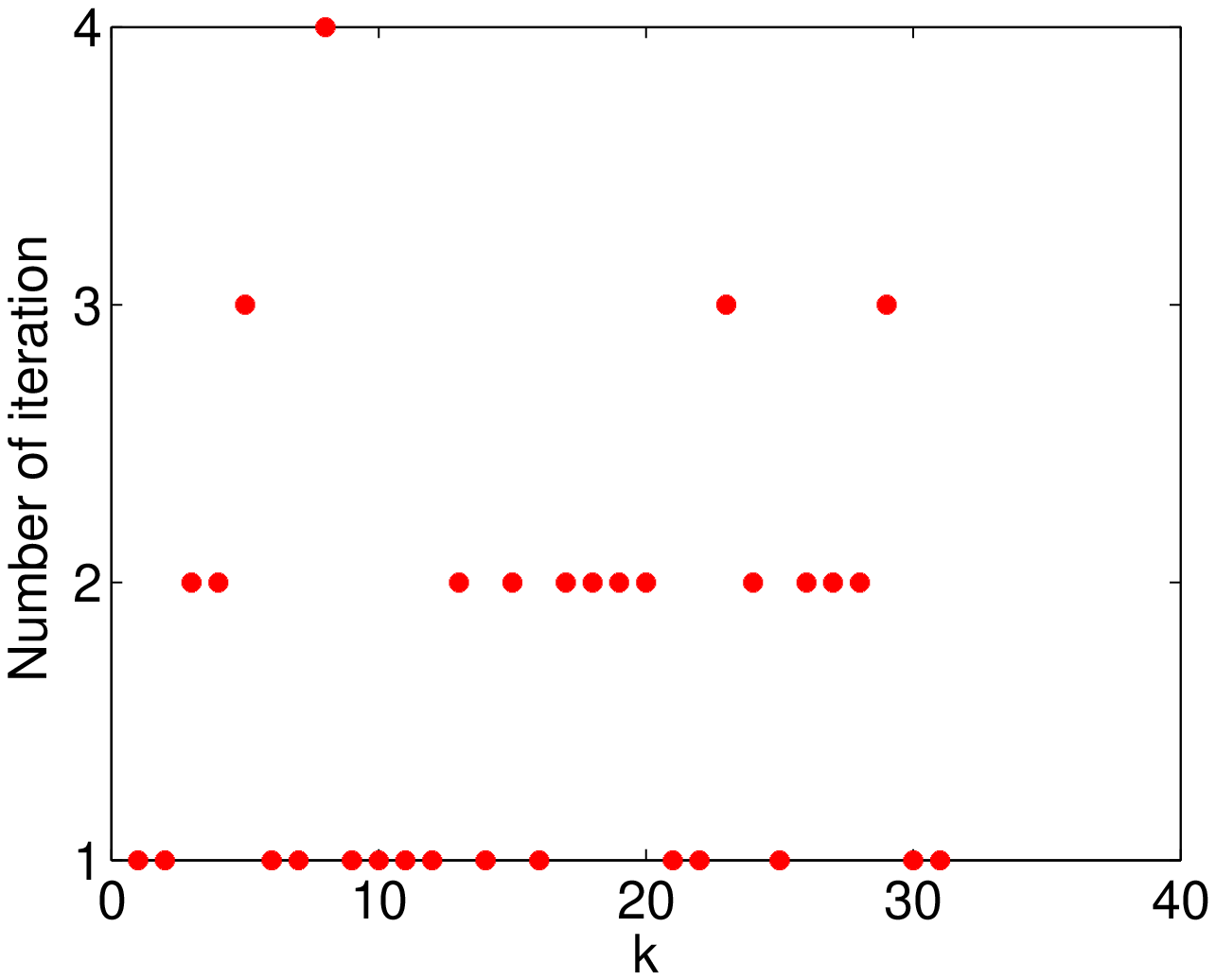}\\
   (c) Partial DCT \\
  \end{tabular}
   \caption{Number of iterations of PDASC at each $\lambda_k$ for random Gaussian (top left with $R=1000$, $n=500$, $p=1000$, $T= 200$, $\sigma=\mbox{1e-3}$),
  random Bernoulli (top right with $R = 1000$, $n = 2^{10}$, $p = 2^{12}$, $T = 2^{8}$, $\sigma =\mbox{1e-3}$)
  and partial DCT (bottom with  $R = 1000$, $n = 2^{11}$, $p = 2^{13}$, $T = 2^{8}$, $\sigma =\mbox{1e-3}$) sensing matrix.
  The parameters $N$ and $J_{max}$ are set to $N=50$ and $J_{max}=5$, respectively.}\label{fig:convsup}
\end{figure}

\subsection{Comparison with existing algorithms}
In this part, we compare Algorithm \ref{alg:pdasc} with six state-of-the-art
algorithms in the compressive sensing literature, including orthogonal matching pursuit (OMP) \cite{PatiRezaiifarKrishnaprasad:1993},
greedy gradient pursuit (GreedyGP) \cite{BlumensathDavies:2009a}, accelerated iterative
hard thresholding (AIHT) \cite{blumensath2012accelerated}, hard thresholding pursuit (HTP) \cite{Foucart:2011},
compressive sampling matching pursuit (CoSaMP) \cite{NeedellTropp:2009} and homotopy algorithm \cite{homtop3, homtop1}.

First, we consider the exact support recovery probability, i.e., the percentage of
the reconstructions whose support agrees with the true active set $A^*$.
To this end, we fix the sensing matrix $\Psi$ as a $500\times 1000$ random Gaussian matrix, $\sigma =
\mbox{1e-3}$, $(N,J_{max})=(100,5) $ or $(50,1)$, and all results are computed from $100$ independent realizations
of the problem setup. Since the different dynamical range may give different results, we take
$R=1$, $10$, 1e3, 1e5 as four exemplary values. The numerical results are summarized in
Fig. \ref{fig:sparslev0}. We observe that when the dynamical range $R$ is not very small,
the proposed PDASC algorithm with $(N,J_{max})=(100,5)$ has a better exact support recovery
probability, and that with the choice $(N,J_{max})$ is largely comparable with other algorithms.

\begin{figure}[ht!]
  \centering
  \begin{tabular}{cc}
    \includegraphics[trim = 1cm 0cm 1cm 0cm, clip=true,width=7cm]{{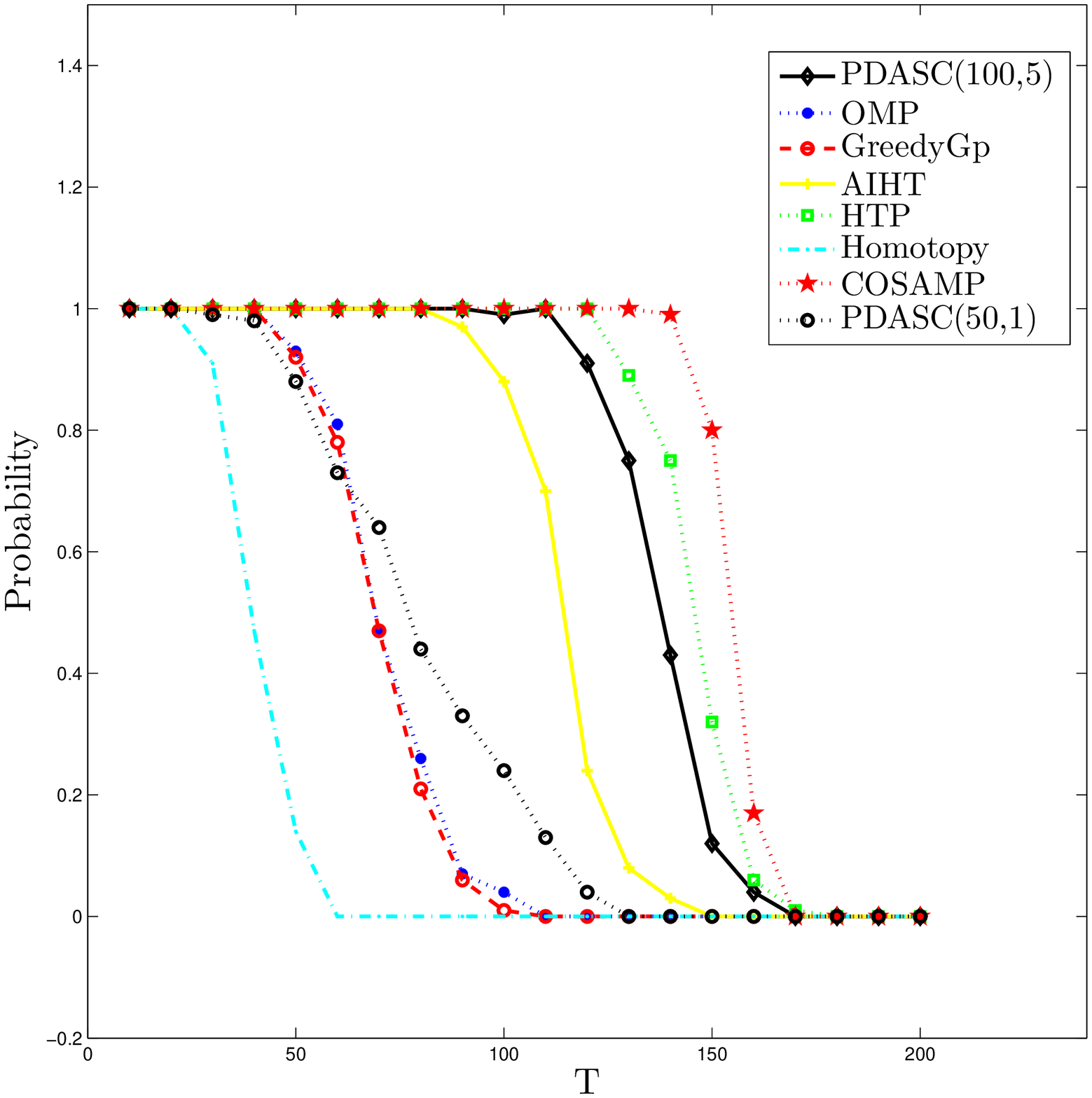}} &
    \includegraphics[trim = 1cm 0cm 1cm 0cm, clip=true,width=7cm]{{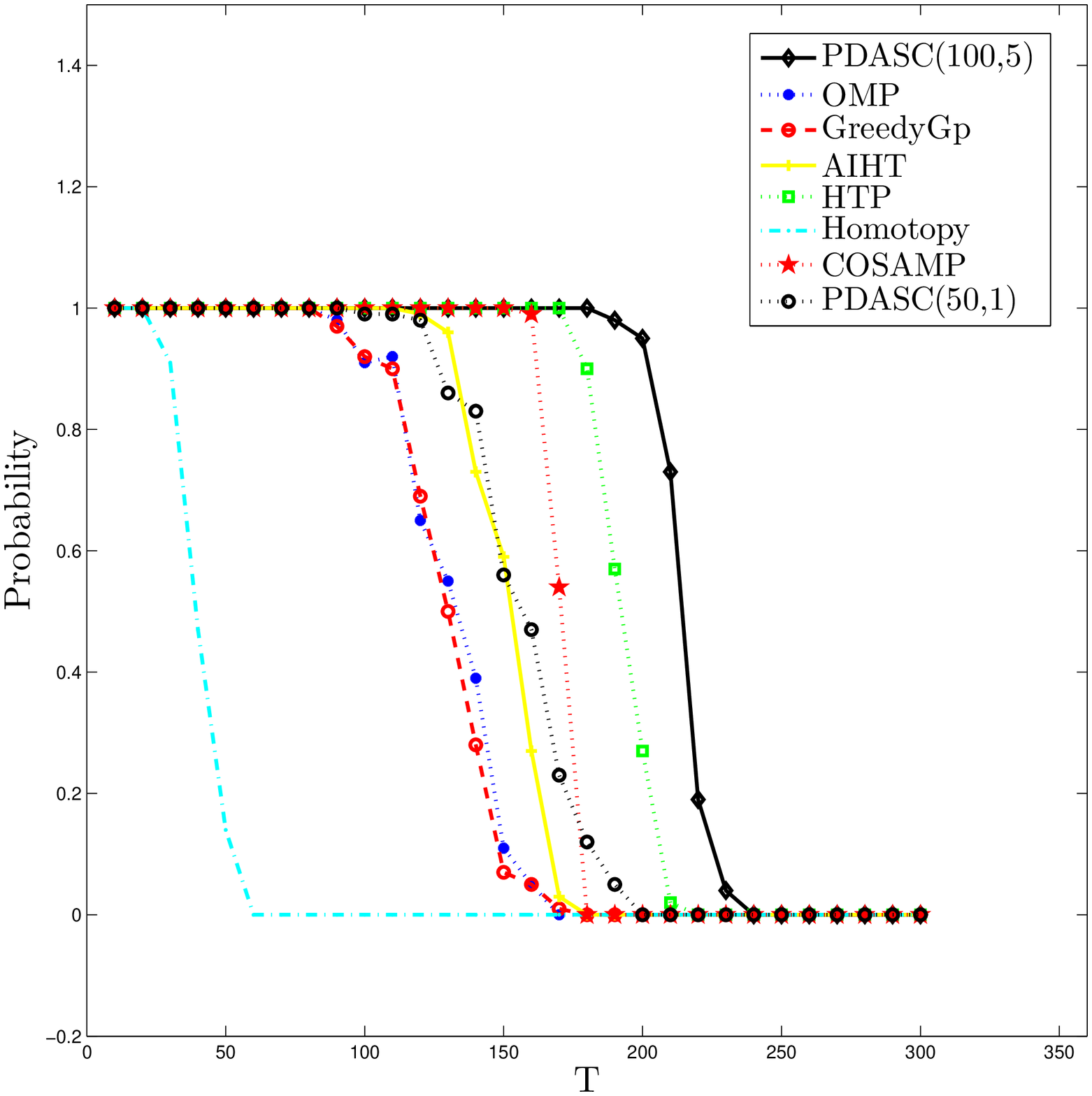}}\\
    (a) $R=1$ & (b) $R=10$\\
    \includegraphics[trim = 1cm 0cm 1cm 0cm, clip=true,width=7cm]{{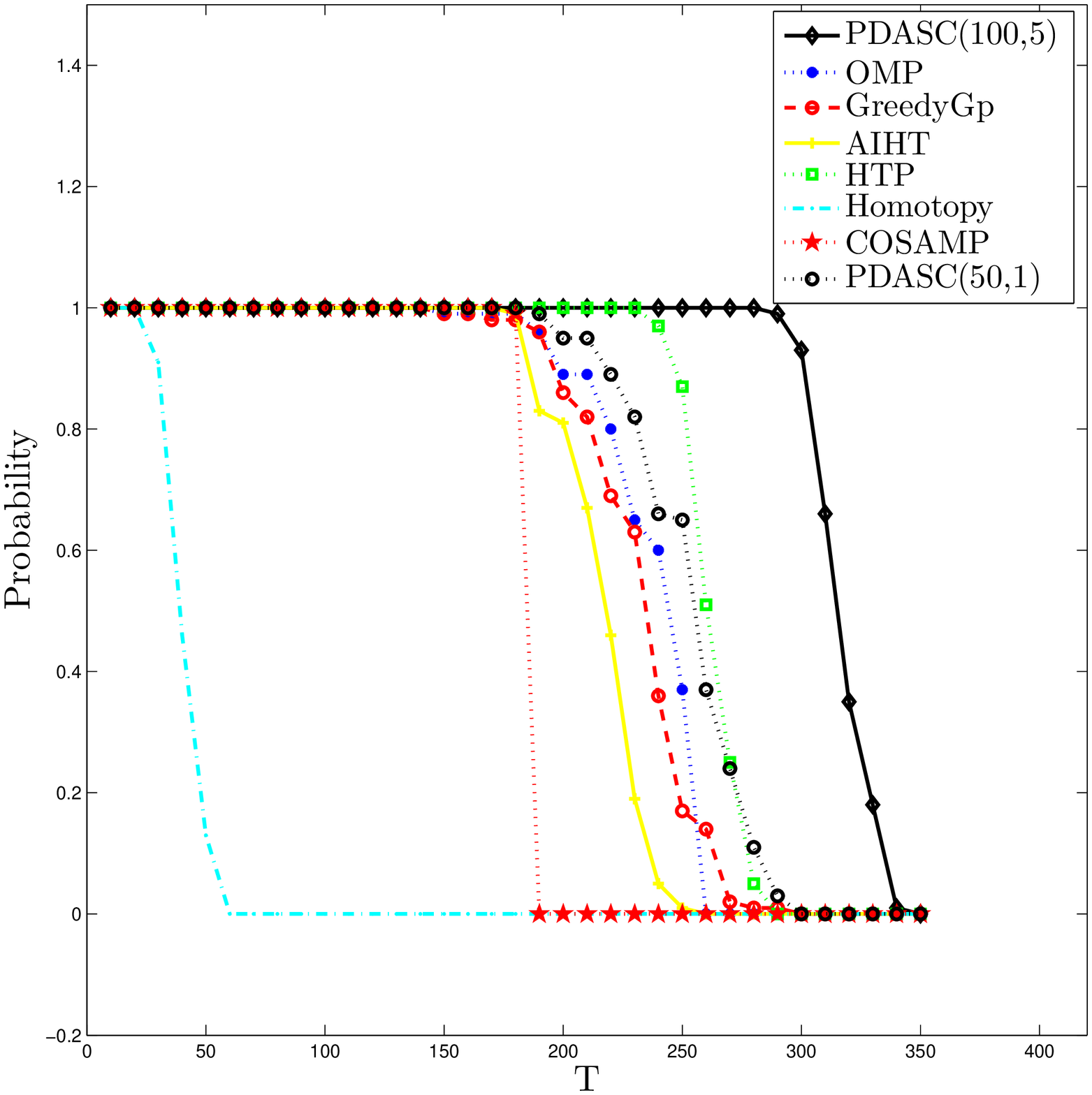}} &
    \includegraphics[trim = 1cm 0cm 1cm 0cm, clip=true,width=7cm]{{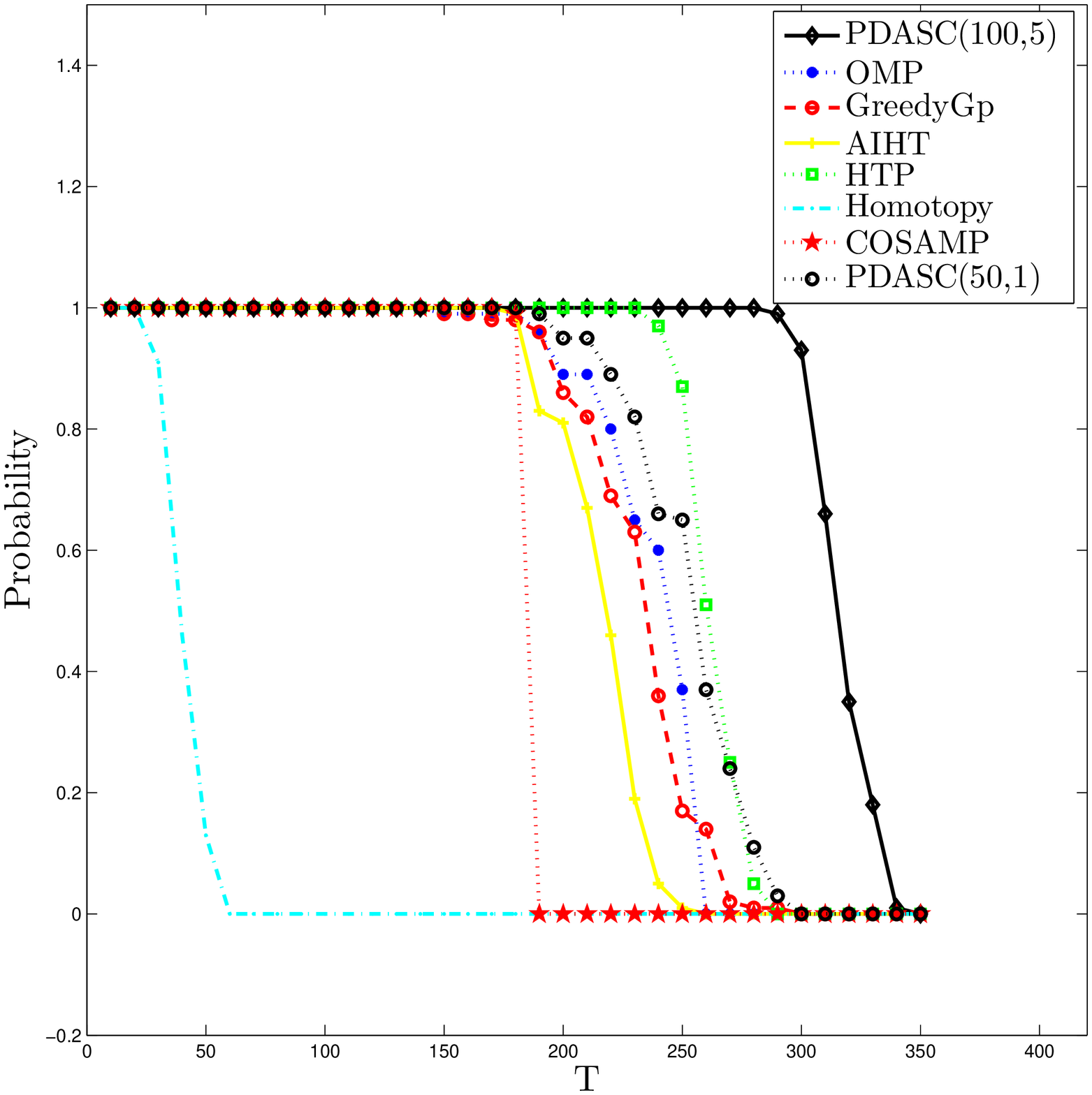}}\\
    (c) $R=10^3$ & (d) $R=10^5$
  \end{tabular}
  \caption{The exact support recovery probability for four different dynamical ranges: $R=1,\ 10,\ 10^3$, and $10^5$.}\label{fig:sparslev0}
\end{figure}

To further illustrate the accuracy and efficiency of the proposed PDASC algorithm, we compare it with
other greedy methods in terms of CPU time and reconstruction error. To this end, we fix $\sigma =
\mbox{1e-2}$, $(N,J_{max})=(100,5)$ or $(50,1)$. The numerical results for random Gaussian,
random Bernoulli and partial DCT sensing matrices with different parameter tuples $(R,n,p,T)$ are shown
in Tables \ref{tab:timeerrorg}-\ref{tab:timeerrorp}, respectively. The results in the tables are
computed from 10 independent realizations of the problem setup. It is observed that the PDASC
algorithm yields reconstructions that are comparable with that by other methods, but usually
with less computing time. Further, we observe that it scales
better with the problem size than other algorithms.

\begin{table}[htb!]
\centering
\caption{Numerical results (CPU time and errors) for medium-scale problems, with random Gaussian
sensing matrix $\Psi$, of size $p = 10000,\ 15000,\ 20000,\ 25000,\ 30000$, $n = \lfloor p/4\rfloor$,
$T= \lfloor n/3\rfloor$. The dynamical range $R$ is $R=1000$, and the noise variance $\sigma$ is
$\sigma=\mbox{1e-2}$.}\label{tab:timeerrorg}
\begin{tabular}{cccccp{0.3cm}p{0.3cm}c}
\hline\hline
\multicolumn{1}{c}{$p$} & \multicolumn{1}{c}{method} & \multicolumn{1}{c}{time(s)} & \multicolumn{1}{c}{Relative $\ell^2$ error} & \multicolumn{1}{c}{Absolute $\ell^{\infty}$ error} \\
 \hline
                  &PDASC(50,1)       &1.94     &4.24e-5         &3.91e-2            \\
                  &PDASC(100,5)    &6.41    &4.49e-5          &3.81e-2      \\
    $10000$       &OMP               &17.4     &4.24e-5         &3.91e-2            \\
                  &GreedyGP          &17.9     &7.50e-5         &1.35e-1          \\
                  &AIHT              &4.62     &4.24e-5         &3.91e-2            \\
                  &HTP               &1.69     &4.24e-5         &3.91e-2            \\
                  &CoSaMP            &10.4     &1.21e-1         &8.87e+1         \\
 \hline
                 &PDASC(50,1)       &4.17     &4.32e-5         &4.60e-2        \\
                  &PDASC(100,5)    &17.4     &4.60e-5         &4.55e-2        \\
  $15000$        &OMP               &56.8     &4.32e-5         &4.60e-2        \\

                 &GreedyGP          &61.4     &6.36e-5         &1.17e-1       \\
                 &AIHT              &9.88     &4.32e-5         &4.60e-2        \\
                 &HTP               &4.52     &4.32e-5         &4.60e-2        \\
                 &CoSaMP            &32.4     &8.93e-2         &7.24e+1       \\
 \hline
                 &PDASC(50,1)       &8.14    &4.50e-5         &4.68e-2             \\
                 &PDASC(100,5)    &34.8    &4.56e-5          &4.34e-2      \\
  $20000$        &OMP               &134     &4.50e-5         &4.68e-2             \\
                 &GreedyGP          &141     &6.12e-5         &1.11e-1           \\
                 &AIHT              &18.4    &4.50e-5         &4.67e-2             \\
                 &HTP               &9.51    &4.50e-5         &4.68e-2             \\
                 &CoSaMP            &79.7    &1.12e-1         &8.03e+1         \\
  \hline
                 &PDASC(50,1)       &13.7    &4.47e-5      &4.03e-2            \\
                 &PDASC(100,5)    &62.3    &4.55e-5          &4.61e-2      \\
  $25000$        &OMP               &260     &4.47e-5      &4.03e-2            \\
                 &GreedyGP          &275     &6.19e-5      &1.35e-1          \\
                 &AIHT              &26.8    &4.47e-5      &4.03e-2            \\
                 &HTP               &19.6    &4.47e-5      &4.03e-2            \\
                 &CoSaMP            &150     &1.07e-1      &7.96e+1        \\
  \hline
                 &PDASC(50,1)       &21.1    &4.52e-5        &4.90e-2             \\
                 &PDASC(100,5)      &95.9    &4.53e-5          &4.53e-2      \\
  $30000$        &OMP               &445     &4.52e-5        &4.90e-2             \\
                 &GreedyGP          &475     &5.57e-5        &1.02e-1           \\
                 &AIHT              &40.3    &4.52e-5        &4.90e-2             \\
                 &HTP               &29.8    &4.52e-5        &4.90e-2             \\
                 &CoSaMP            &251     &9.00e-2        &6.48e+1         \\
\hline\hline
\end{tabular}
\end{table}

\begin{table}[htb!]
\centering
\caption{Numerical results (CPU time and errors) for medium-scale problems, with random Bernoulli
sensing matrix $\Psi$, of size $p = 10000,\ 15000,\ 20000,\ 25000,\ 30000$, $n = \lfloor p/4\rfloor$,
$T= \lfloor n/4\rfloor$. The dynamical range $R$ is $R=10$, and the noise variance $\sigma$ is
$\sigma=\mbox{1e-2}$.}\label{tab:timeerrorb}
\begin{tabular}{cccccp{0.3cm}p{0.3cm}c}
\hline\hline
\multicolumn{1}{c}{$p$} & \multicolumn{1}{c}{method} & \multicolumn{1}{c}{time(s)} & \multicolumn{1}{c}{Relative $\ell^2$ error} & \multicolumn{1}{c}{Absolute $\ell^{\infty}$ error} \\
 \hline
                  &PDASC(50,1)     &0.62     &2.33e-3         &3.66e-2              \\
                  &PDASC(100,5)    &2.56    &2.50e-3          &3.82e-2      \\
    $10000$       &OMP             &9.92     &2.33e-3         &3.70e-2              \\
                  &GreedyGP        &11.8     &1.80e-2         &1.02e+0            \\
                  &AIHT            &3.50     &2.31e-3         &3.64e-2              \\
                  &HTP             &0.90     &2.31e-3         &3.64e-2              \\
                  &CoSaMP          &4.50     &4.68e-3         &6.71e-2           \\
 \hline
                 &PDASC(50,1)      &1.41     &2.39e-3         &3.62e-2         \\
                 &PDASC(100,5)    &6.44    &2.50e-3          &4.11e-2      \\
  $15000$        &OMP              &33.4     &2.40e-3         &3.63e-2         \\
                 &GreedyGP         &40.1     &1.86e-2         &1.03e+0        \\
                 &AIHT             &7.19     &2.39e-3         &3.62e-2         \\
                 &HTP              &2.27     &2.39e-3         &3.62e-2         \\
                 &CoSaMP           &13.6     &5.06e-3         &8.07e-2        \\
 \hline
                 &PDASC(50,1)      &2.67     &2.52e-3         &4.21e-2             \\
                 &PDASC(100,5)    & 12.7    &2.50e-3         &3.97e-2      \\
  $20000$        &OMP              &78.5     &2.53e-3         &4.18e-2             \\
                 &GreedyGP         &94.9     &1.55e-2         &1.02e+0           \\
                 &AIHT             &12.8     &2.52e-3         &4.20e-2             \\
                 &HTP              &4.25     &2.52e-3         &4.20e-2             \\
                 &CoSaMP           &30.1     &4.99e-3         &7.69e-2         \\
  \hline
                 &PDASC(50,1)      &4.51     &2.49e-3         &4.16e-2         \\
                 &PDASC(100,5)    &21.7    &2.50e-3          &3.99e-2      \\
  $25000$        &OMP              &152.     &2.50e-3         &4.21e-2         \\
                 &GreedyGP         &185.     &2.14e-2         &1.07e+0       \\
                 &AIHT             &23.9     &2.49e-3         &4.17e-2         \\
                 &HTP              &7.78     &2.49e-3         &4.17e-2         \\
                 &CoSaMP           &58.4     &5.04e-3         &8.13e-2     \\
  \hline
                 &PDASC(50,1)      &6.90     &2.45e-3        &8.67e-2             \\
                 &PDASC(100,5)    &34.7    &2.50e-3          &4.18e-2      \\
  $30000$        &OMP              &264.     &2.46e-3        &1.01e-2             \\
                 &GreedyGP         &324.     &1.72e-2        &1.01e+0           \\
                 &AIHT             &24.7     &2.45e-3        &8.47e-2             \\
                 &HTP              &10.7     &2.45e-3        &8.46e-2             \\
                 &CoSaMP           &93.3     &4.97e-3        &3.63e+1         \\
\hline\hline
\end{tabular}
\end{table}

\begin{table}[htb!]
\centering
\caption{Numerical results (CPU time and errors) for large-scale problems, with partial DCT
sensing matrix $\Psi$, of size $p = 2^{13}$, $2^{14}$, $2^{15}$, $2^{16}$, $2^{17}$, $n = \lfloor p/4\rfloor$,
$T= \lfloor n/3\rfloor$. The dynamical range $R$ is $R=100$, and the noise variance $\sigma$ is
$\sigma=\mbox{1e-2}$.}\label{tab:timeerrorp}
\begin{tabular}{cccccp{0.3cm}p{0.3cm}c}
\hline\hline
\multicolumn{1}{c}{$p$} & \multicolumn{1}{c}{method} & \multicolumn{1}{c}{time(s)} & \multicolumn{1}{c}{Relative $\ell^2$ error} & \multicolumn{1}{c}{Absolute $\ell^{\infty}$ error} \\
 \hline
                  &PDASC(50,1)    &0.31    &7.11e-4         &7.78e-2      \\
                  &PDASC(100,5)   &1.51    &7.08e-4         &7.93e-2      \\
    $p = 2^{13}$  &OMP            &2.26    &7.06e-4         &7.75e-2        \\
                  &GreedyGP       &0.74    &1.07e-3         &1.79e-1        \\
                  &AIHT           &0.35    &7.06e-4         &7.75e-2       \\
                  &HTP            &0.57    &7.06e-4         &7.75e-2          \\
                  &CoSaMP         &0.55    &3.74e-1         &3.10e+1       \\
 \hline
                 &PDASC(50,1)     &0.48    &7.05e-4         &7.43e-2          \\
                 &PDASC(100,5)    &3.30    &6.95e-4         &8.49e-2      \\
  $2^{14}$        &OMP            &11.3    &7.00e-4         &7.49e-2          \\
                 &GreedyGP        &2.52    &1.21e-3         &5.95e-1         \\
                 &AIHT            &0.48    &7.01e-4         &7.53e-2          \\
                 &HTP             &0.95    &7.01e-4         &7.53e-2          \\
                 &CoSaMP          &0.87    &3.52e-1         &2.95e+1         \\
 \hline
                 &PDASC(50,1)     &0.85    &8.68e-4         &5.52e-1              \\
                 &PDASC(100,5)    &6.59    &7.42e-4         &1.74e-1      \\
  $ 2^{15}$        &OMP           &65.6    &9.94e-4         &1.00e-2              \\
                 &GreedyGP        &10.0    &8.08e-4         &1.95e-1            \\
                 &AIHT            &0.90    &7.15e-4         &8.27e-2              \\
                 &HTP             &1.73    &7.15e-4         &8.27e-2              \\
                 &CoSaMP          &1.55    &3.90e-1         &4.11e+1          \\
  \hline
                 &PDASC(50,1)     &1.60    &7.33e-4         &8.67e-2           \\
                 &PDASC(100,5)    &13.2    &7.07e-4         &8.81e-2      \\
  $2^{16}$        &OMP            &410    &1.13e-3          &1.01e+0            \\
                 &GreedyGP        &35.8    &1.00e-3         &1.01e+0         \\
                 &AIHT            &2.00    &7.28e-4         &8.47e-2           \\
                 &HTP             &3.59    &7.28e-4         &8.46e-2           \\
                 &CoSaMP          &2.75    &3.74e-1         &3.63e+1       \\
  \hline
                 &PDASC(50,1)     &4.37    &7.17e-4         &1.00e-1               \\
                 &PDASC(100,5)    &37.1    &7.13e-4         &9.86e-2      \\
  $ 2^{17}$        &OMP           &3.11e+3 &1.11e-3         &1.02e+0            \\
                 &GreedyGP        &199     &7.96e-4         &5.98e-1              \\
                 &AIHT            &4.90    &7.14e-4         &1.00e-1               \\
                 &HTP             &9.03    &7.14e-4         &1.00e-1               \\
                 &CoSaMP          &7.70    &3.90e-1         &3.90e+1           \\
\hline\hline
\end{tabular}
\end{table}

Lastly, we consider one-dimensional signals and two-dimensional images. In this case the explicit
form of the sensing matrix $\Psi$ may be not available, hence the least-squares step (for updating the primal variable)
at line 7 of Algorithm \ref{alg:pdasc} can only be solved by an iterative method. We employ the conjugate
gradient (CG) method to solve the least-squares problem inexactly. The initial guess for the CG method
for the $\lambda_k$-problem is the solution $x(\lambda_{k-1})$, and the stopping criterion for the
CG method is as follows: either the number of CG iterations is greater than 2 or the residual is
below a given tolerance $\mbox{1e-5}\epsilon$.

For the one-dimensional signal, the sampling matrix $\Psi$ is of size $665\times 1024$, and it consists
of applying a partial FFT and an inverse wavelet transform, and the signal under wavelet transformation
has $247$ nonzero entries and $\sigma=\mbox{1e-4}$, $N=50$, $J_{\max}=1$. The results are shown in
Fig. \ref{fig:1d} and Table \ref{tab:1d}. The reconstructions by all the methods, except the AIHT and CoSaMP,
are visually very appealing and in excellent agreement with the exact solution. The reconstructions by the AIHT
and CoSaMP suffer from pronounced oscillations. This is further confirmed by the PSNR values which is defined as
\begin{equation*}
 \textit{PSNR}=10\cdot \log\frac{V^2}{MSE}
\end{equation*}
where $V$ is the maximum absolute value of the reconstruction and the true solution, and $MSE$ is the mean
squared error of the reconstruction, cf. Table \ref{tab:1d}.

For the two-dimensional MRI image, the sampling matrix $\Psi$
amounts to a partial FFT and an inverse wavelet transform of size $1657\times 4096$. The image under
wavelet transformation has $792$ nonzero entries and $\sigma=\mbox{1e-4}$, $N=50$, and $J_{\max}=1$.
The numerical results are shown in Fig. \ref{fig:2d} and Table \ref{tab:2d}. The observation for the
one-dimensional signal remains largely valid: except the CoSaMP, all other methods can yield almost
identical reconstructions within similar computational efforts. Therefore, the proposed PDASC algorithm
is competitive with state-of-the-art algorithms.

\begin{figure}[ht!]
  \centering
  \begin{tabular}{cc}
    \includegraphics[trim = 2cm 2cm 1cm 1cm, clip=true,width=5cm]{{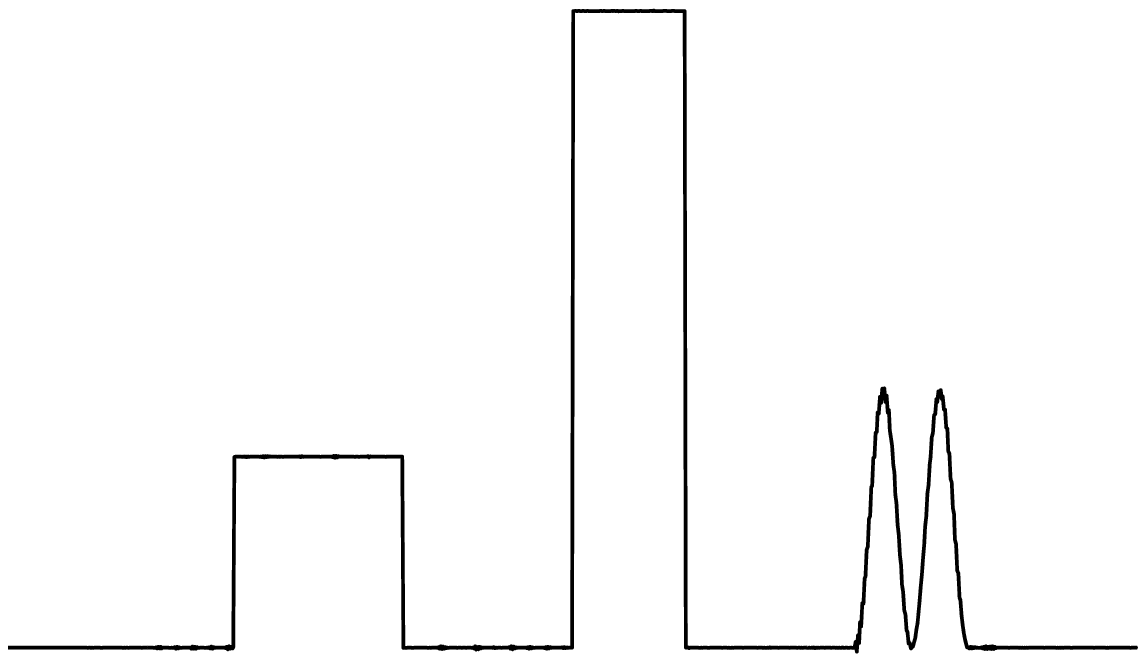}} &
    \includegraphics[trim = 2cm 2cm 1cm 1cm, clip=true,width=5cm]{{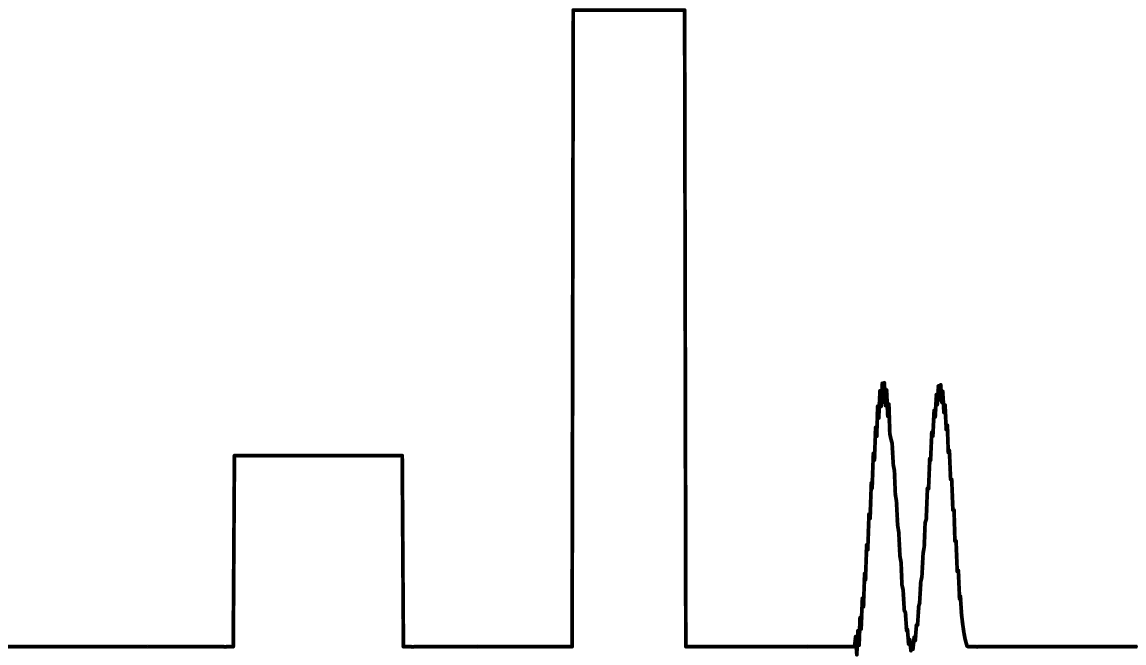}}\\
    (a) PDASC & (b) OMP\\
    \includegraphics[trim = 2cm 2cm 1cm 1cm, clip=true,width=5cm]{{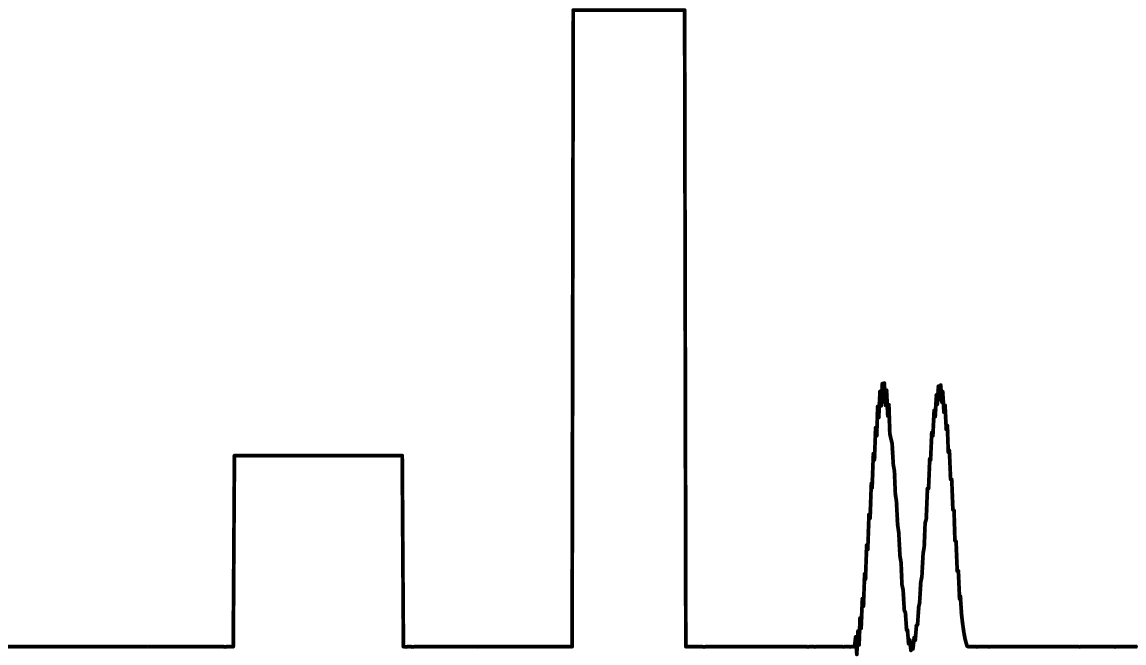}} &
    \includegraphics[trim = 2cm 2cm 1cm 1cm, clip=true,width=5cm]{{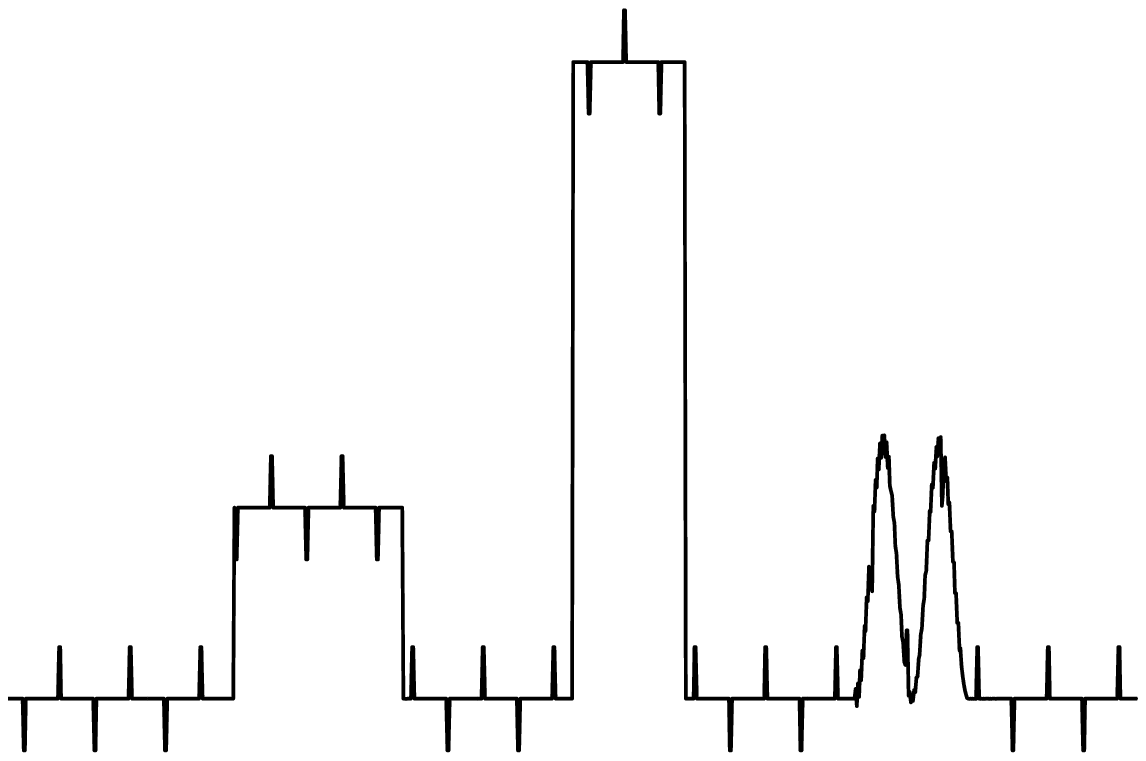}}\\
    (c) GreedyGP & (d) AIHT\\
    \includegraphics[trim = 2cm 2cm 1cm 1cm, clip=true,width=5cm]{{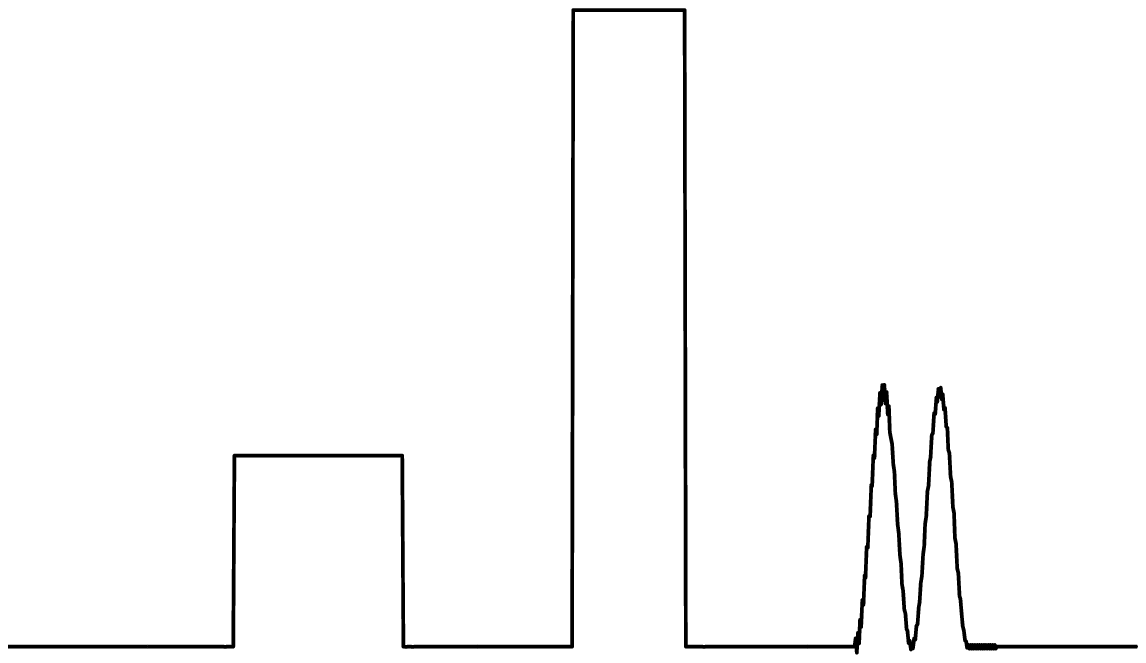}} &
    \includegraphics[trim = 2cm 2cm 1cm 1cm, clip=true,width=5cm]{{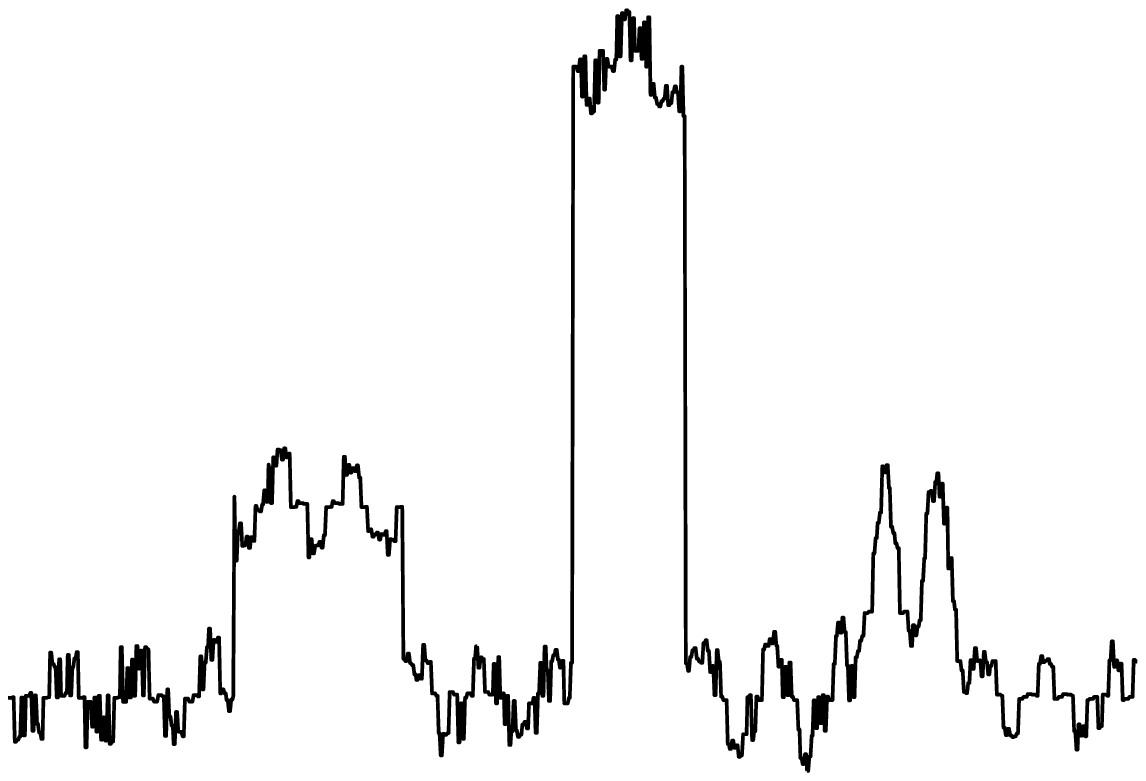}}\\
    (e) HTP & (f) CoSaMP
  \end{tabular}
  \caption{Reconstruction results of 1 dimension signal.}\label{fig:1d}
\end{figure}

%\begin{figure}[h]
%\centering
%\includegraphics[trim = 0cm 2.5cm 0cm 0cm, clip=true,scale=0.7]{1D.eps}
%\caption{Reconstruction 1 dimension signa  and their PSNR values {\color{red} plot these figures separately? remove the title for each figure,since they are shown in the table already ?}}\label{fig:1d}
%\end{figure}

%\begin{figure}[h]
%\centering
%\includegraphics[trim = 0cm 1.5cm 0cm 0cm, clip=true,scale=0.7]{2D.eps}
%\caption{Reconstruction Phantom images and their PSNR values. {\color{red} plot these figures separately? remove the title for each figure, since they are shown in the table already ?}}\label{fig:2d}
%\end{figure}

\begin{figure}[ht!]
  \centering
  \begin{tabular}{cc}
    \includegraphics[trim = 2cm 1cm 1cm 1cm, clip=true,width=5cm]{{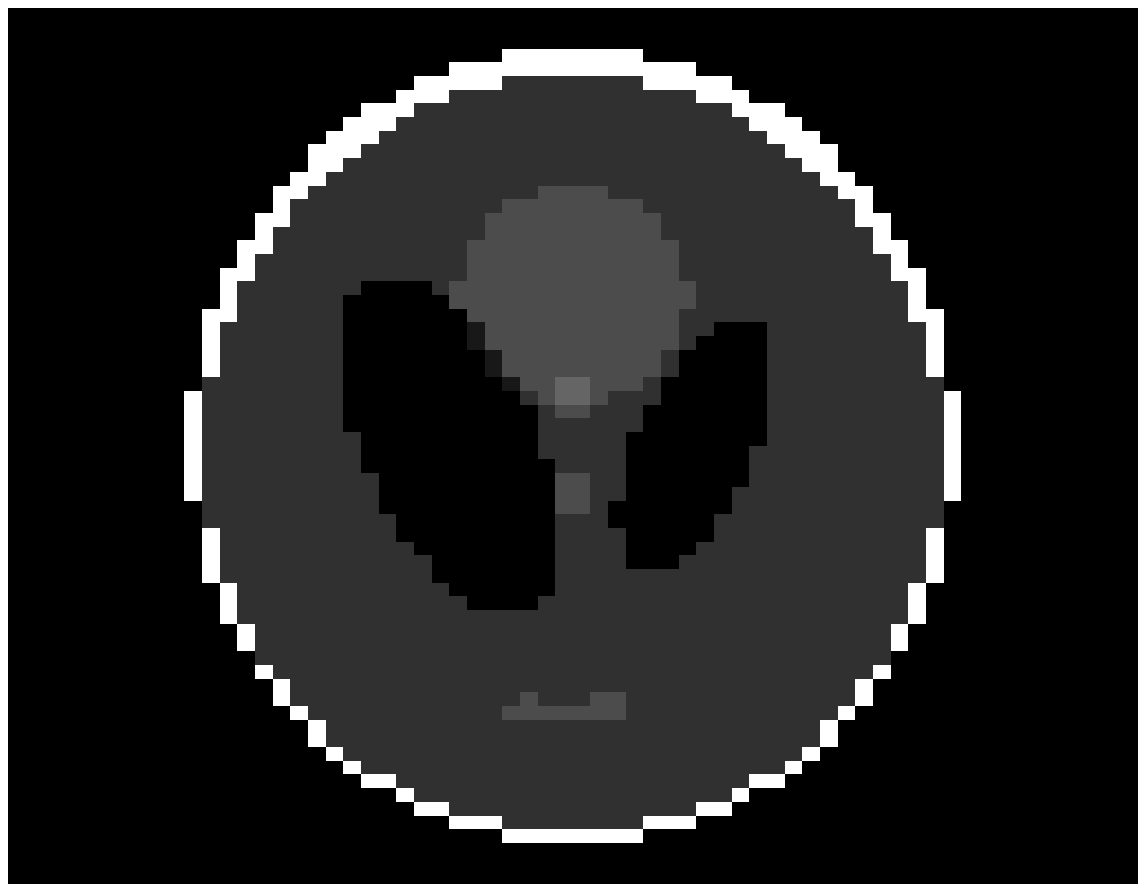}} &
    \includegraphics[trim = 2cm 1cm 1cm 1cm, clip=true,width=5cm]{{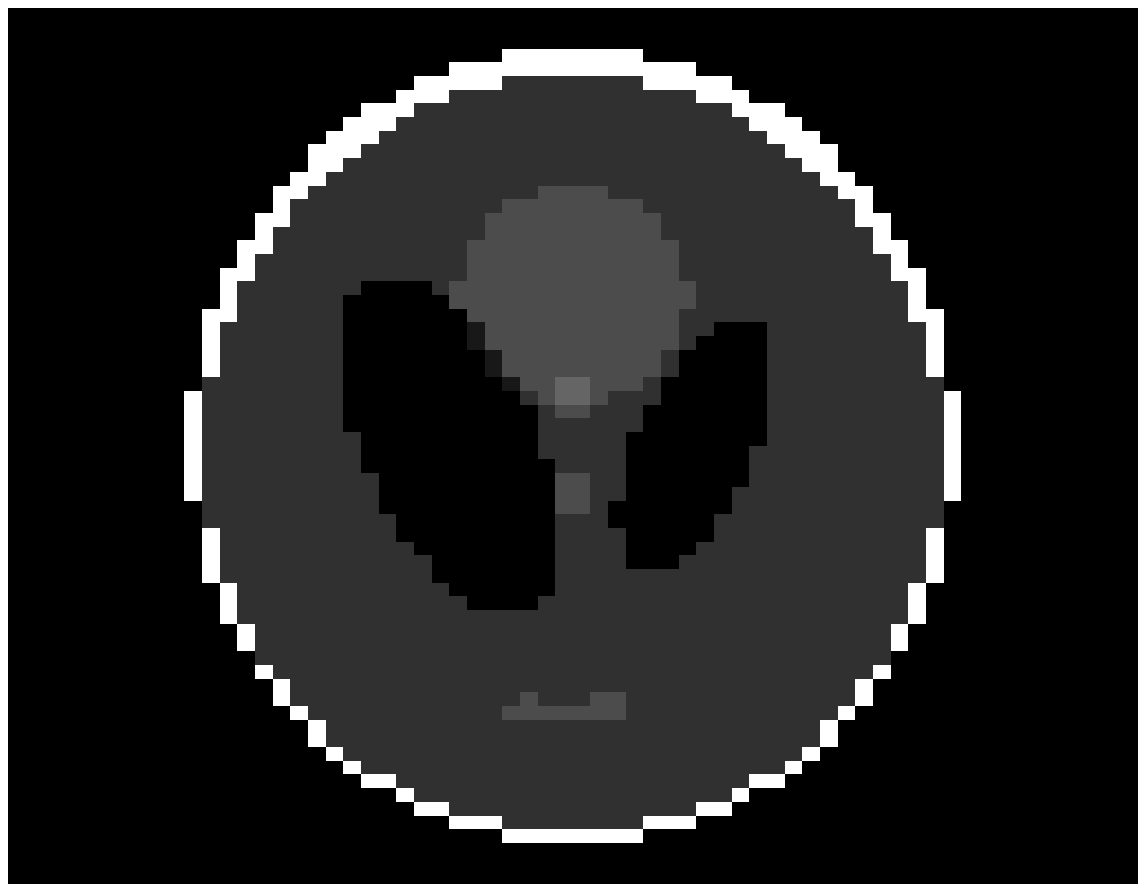}}\\
    (a) PDASC & (b) OMP\\
    \includegraphics[trim = 2cm 1cm 1cm 1cm, clip=true,width=5cm]{{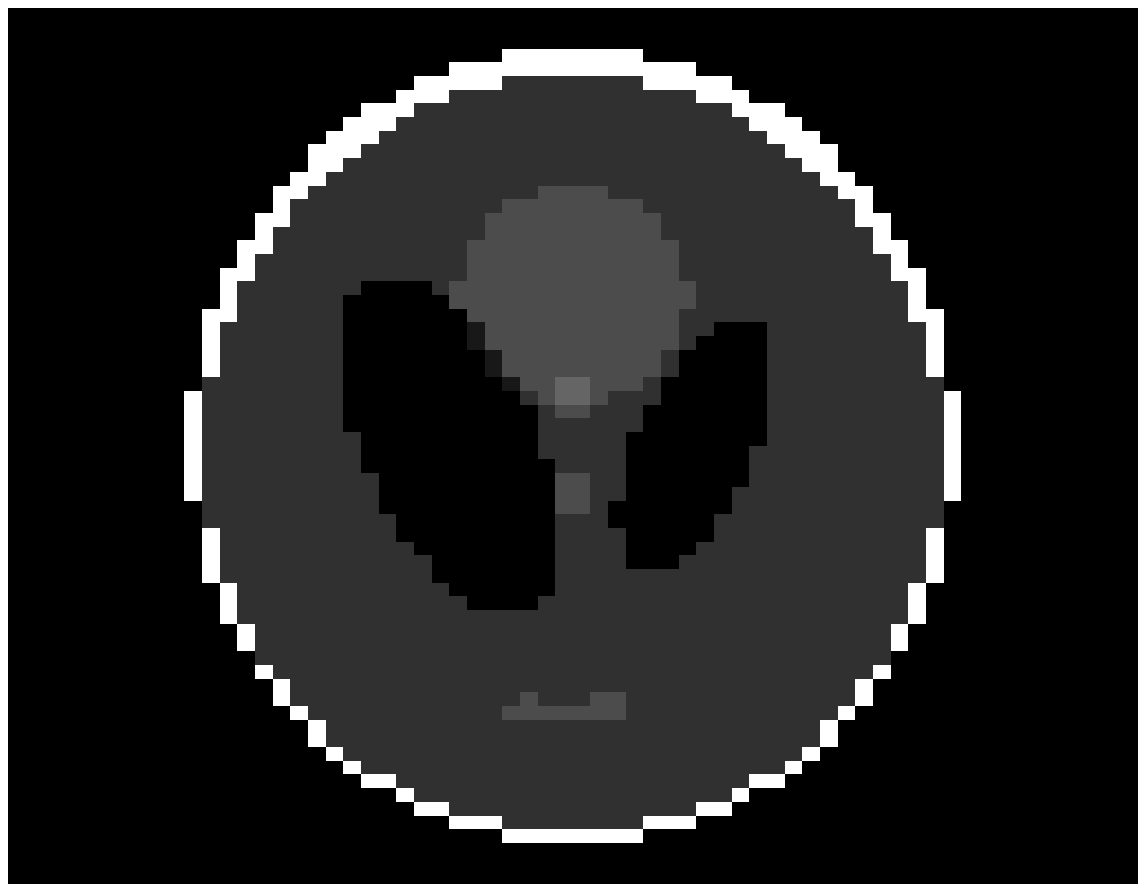}} &
    \includegraphics[trim = 2cm 1cm 1cm 1cm, clip=true,width=5cm]{{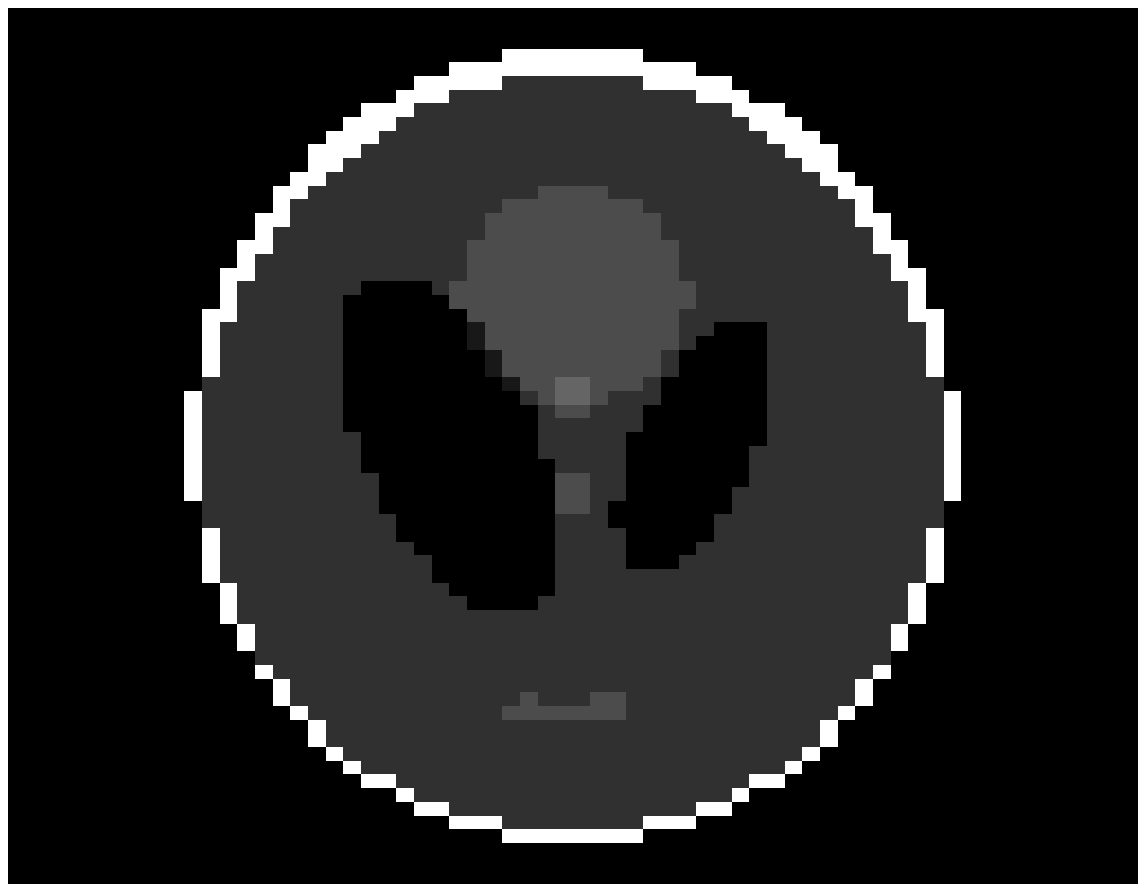}}\\
    (c) GreedyGP & (d) AIHT\\
    \includegraphics[trim = 2cm 1cm 1cm 1cm, clip=true,width=5cm]{{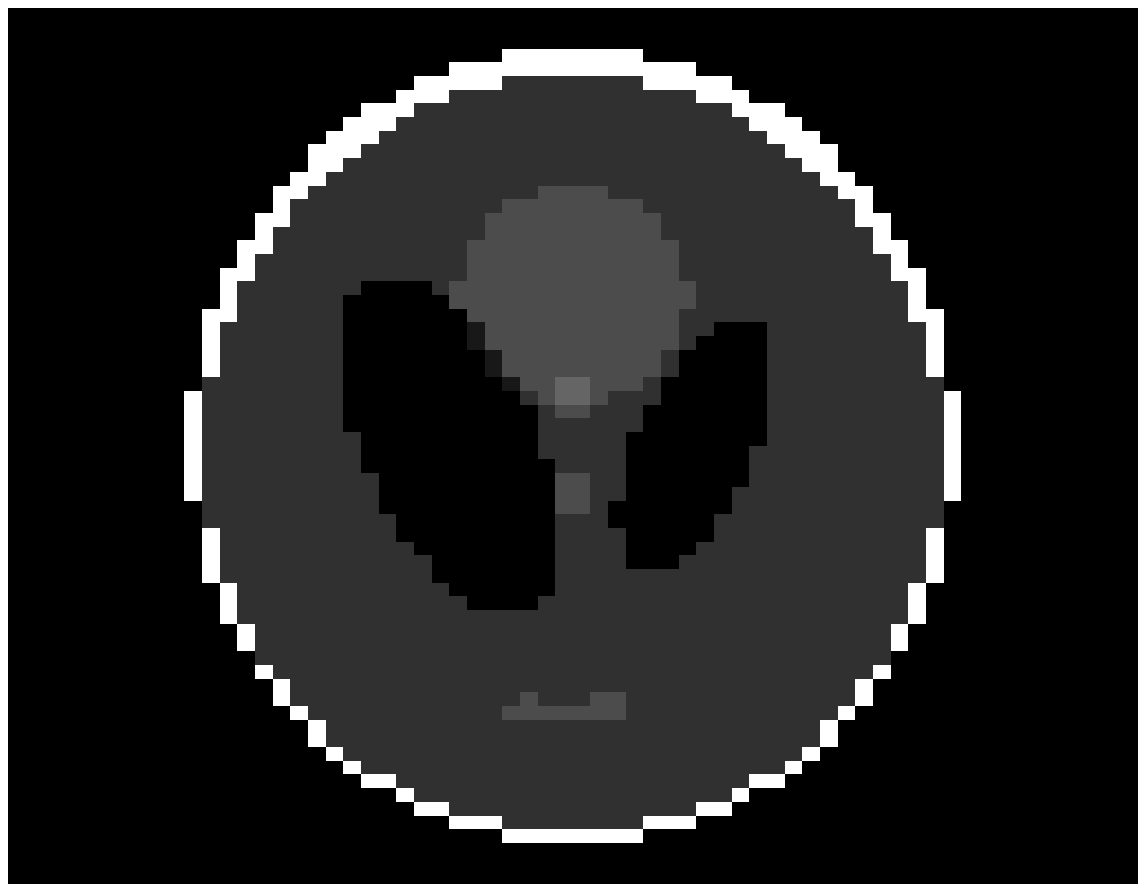}} &
    \includegraphics[trim = 2cm 1cm 1cm 1cm, clip=true,width=5cm]{{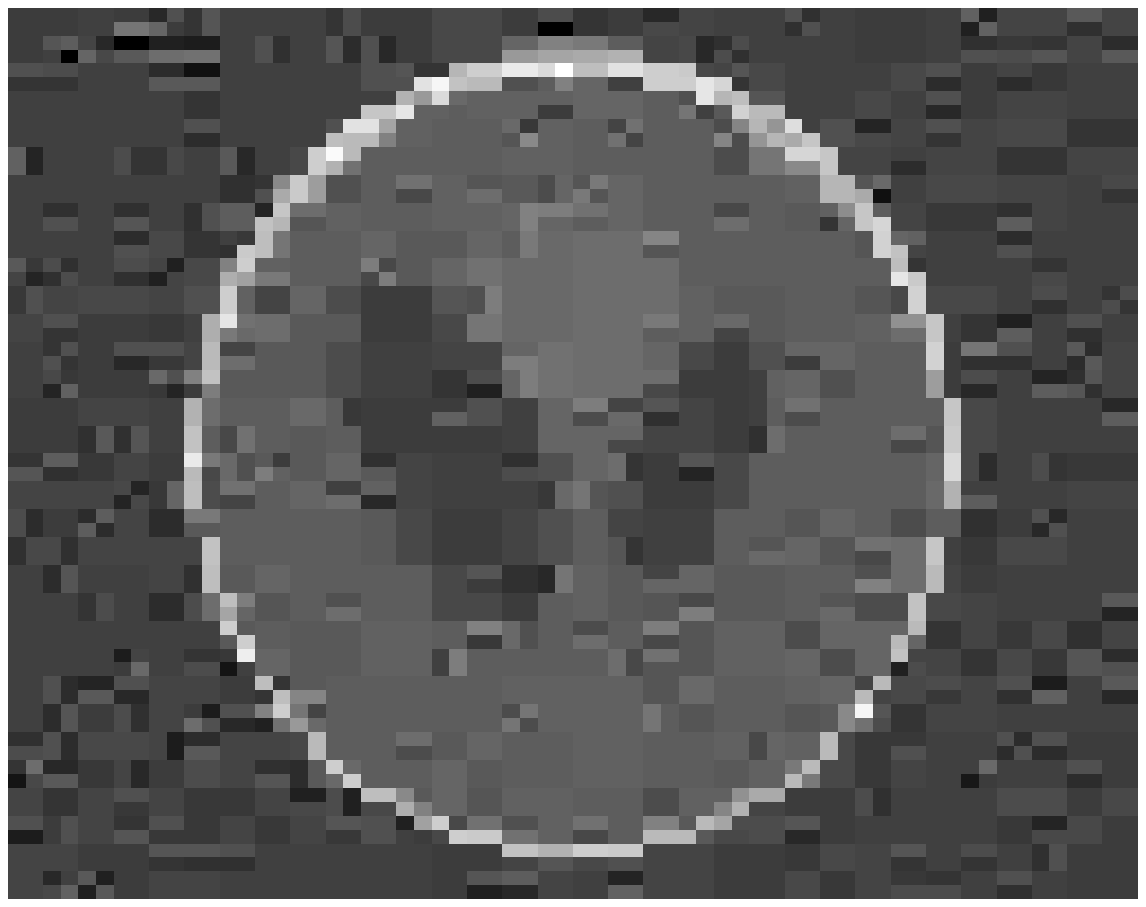}}\\
    (e) HTP & (f) CoSaMP
  \end{tabular}
  \caption{Reconstruction results of two-dimensional phantom images.}\label{fig:2d}
\end{figure}

\begin{table}[h]
\centering
 \caption{One-dimensional signal: $n=665$, $p=1024$, $T=247$, $\sigma=\mbox{1e-4}$.}\label{tab:1d}
 \begin{tabular}{ccccc}
 \hline
    method   &CPU time  &PSNR      \\
 \hline
  PDASC        & 0.49  &53     \\
  OMP          & 1.45  &49     \\
  GreedyGp     & 0.76  &49     \\
  AIHT         & 0.58  &34    \\
  HTP          &0.40  &51     \\
  CoSaMP       & 0.91 &26     \\
  \hline
  \end{tabular}
\end{table}

\begin{table}[h]
\centering
 \caption{Two-dimensional image, $n=1657$, $p=4096$, $T=792$,  $\sigma=\mbox{1e-4}$.}\label{tab:2d}
 \begin{tabular}{ccccc}
 \hline
    method   &CPU time  &PSNR      \\
 \hline
  PDASC          & 0.71  &81     \\
  OMP             & 6.82  &83     \\
  GreedyGP       & 2.93  &74       \\
  AIHT           & 0.56  &74    \\
  HTP            & 0.57 &82     \\
  CoSaMP          & 1.46 &20     \\
  \hline
  \end{tabular}
\end{table}

%
%\begin{table}[h]
%\centering
% \caption{Influence of $N$ on CPU time  (based on 100 independent replications) with random Gaussian
%sensing matrix $\Psi$ of size $p = 1000$, $n = 10000$,
%$T= 200$. The dynamical range $R$ is $R=100$,  the noise variance $\sigma$ is
%$\sigma=\mbox{1e-2}$ and $J_{max} = 5$. }\label{tab:cpuN}
% \begin{tabular}{ccccc}
% \hline
%    $N$  &CPU time (second)       \\
% \hline
%   50     & 0.52       \\
%  60            & 0.61       \\
%  70            & 0.70         \\
%  80            & 0.80      \\
%  90            & 0.88     \\
%  100           & 0.95     \\
%  \hline
%  \end{tabular}
%\end{table}
%
%
%\begin{table}[h]
%\centering
% \caption{Influence of $N$ on CPU time  (based on 100 independent replications) with random Gaussian
%sensing matrix $\Psi$ of size $p = 1000$, $n = 10000$,
%$T= 200$. The dynamical range $R$ is $R=100$,  the noise variance $\sigma$ is
%$\sigma=\mbox{1e-2}$ and $N = 100$. }\label{tab:cpuJ}
% \begin{tabular}{ccccc}
% \hline
%    $J_{max}$  &CPU time (second)       \\
% \hline
%   1           & 0.57       \\
%  2            & 0.67       \\
%  3            & 0.72         \\
%  4            & 0.74      \\
%  5            & 0.73    \\
%  \hline
%  \end{tabular}
%\end{table}
%

\section{Conclusion}
We have developed an efficient and accurate primal-dual active set with continuation algorithm
for the $\ell^0$ penalized least-squares problem arising in compressive sensing. It combines
the fast local convergence of the active set technique and the globalizing property of the
continuation technique. The global finite step convergence of the algorithm was established
under the mutual incoherence property or restricted isometry property on the sensing matrix.
Our extensive numerical results indicate that the proposed algorithm is competitive in comparison
with state-of-the-art algorithms in terms of efficiency, accuracy and exact recovery probability,
without a knowledge of the exact sparsity level.

%There are several possible extensions of the work. The first is to extend the algorithm to
%other nonsmooth sparse model, i.e., smoothly clipped absolute deviation and the clipped LASSO.
%Second,

\section*{Acknowledgement}
The research of B. Jin is supported by NSF Grant DMS-1319052, and that of X. Lu
is partially supported by National Science Foundation of  China No. 11101316 and No. 91230108.

\bibliographystyle{abbrv}
\bibliography{l0conv}
\end{document}